    \pgfplotsset{compat=1.15}
\newcolumntype{C}[1]{>{\centering\arraybackslash $}p{#1}<{$}}
\theoremstyle{plain}
\newtheorem{theorem}{Theorem}[section]
\newtheorem{lemma}[theorem]{Lemma}
\newtheorem{proposition}[theorem]{Proposition}
\newtheorem{claim}{Claim}
\newtheorem{corollary}[theorem]{Corollary}
\newtheorem{openproblem}{Open Problem}
\theoremstyle{definition}
\newtheorem{definition}[theorem]{Definition}
\newtheorem{remark}[theorem]{Remark}
\newtheorem{example}[theorem]{Example}
\newcommand{\N}{\mathbb{N}} 
\newcommand{\R}{\mathbb{R}} 
\newcommand{\alphabet}{\Sigma}
\newcommand{\words}{\alphabet^*} 
\newcommand{\eps}{\varepsilon} 
\newcommand{\infwords}{\alphabet^{\N}}
\DeclareMathOperator{\pref}{pref} 
\DeclareMathOperator{\suff}{suff} 
\DeclareMathOperator{\freqname}{freq}
\newcommand{\freq}[2][{}]{\freqname_{#1}(#2)} 
\newcommand{\supfreq}[2][{}]{\overline{\freqname}_{#1}(#2)} 
\newcommand{\inffreq}[2][{}]{\underline{\freqname}_{#1}(#2)} 
\newcommand{\infw}[1]{\mathbf{#1}}
\newcommand{\lang}[2][{}]{\mathcal{L}_{#1}(#2)}
\newcommand{\ablang}[2][{}]{\mathcal{L}^{\text{ab}}_{#1}(#2)}
\newcommand{\soc}[1]{\Omega(#1)}
\newcommand{\abclsr}[1]{\mathcal{A}(#1)}
\newcommand{\complfunction}[1]{\mathcal{P}_{#1}}
\newcommand{\abcomplfunction}[1]{\complfunction{#1}^{\text{ab}}}
\newcommand{\compl}[2][{}]{\complfunction{#1}(#2)}
\newcommand{\abcompl}[2][{}]{\abcomplfunction{#1}(#2)}
\begin{document}

    \title{Abelian Closures of Infinite Binary Words}
		
		\author[2,3]{Svetlana Puzynina
		\corref{cor1}}
		\ead{s.puzynina@gmail.com}
		
		\author[4]{Markus A. Whiteland\corref{cor1}
		}
		\ead{mawhit@mpi-sws.org}
		
	\address[2]{St. Petersburg State University, Russia}
		\address[3]{Sobolev Institute of Mathematics, Russia}
		\address[4]{Max Planck Institute for Software Systems, Saarland Informatics Campus,
		Saarbr{\"u}cken, Germany}
     
     

\begin{abstract}
Two finite words $u$ and $v$ are called Abelian equivalent 
if each letter occurs equally many times in both $u$ and $v$. The
abelian closure $\abclsr{\infw{x}}$ of (the shift orbit closure of) an infinite word
$\infw{x}$ is the set of infinite words $\infw{y}$ such that, for each
factor $u$ of $\infw{y}$, there exists a factor $v$ of $\infw{x}$
which is abelian
equivalent to $u$. 
The notion of an abelian closure gives a characterization of
Sturmian words: among binary uniformly recurrent words, Sturmian
words are exactly those words for which $\abclsr{\infw{x}}$
equals the shift orbit closure $\soc{\infw{x}}$. 
In this paper we show that, contrary to larger alphabets, the abelian closure of a uniformly recurrent aperiodic binary word which is not Sturmian contains infinitely many minimal subshifts.
\end{abstract}

\maketitle

\section{Introduction}

The abelian equivalence relation has been an active topic of research in the recent decades.
Two finite words $u$ and $v$ are called \emph{abelian equivalent} if, for each letter $a$ of
the underlying alphabet $\Sigma$, the words $u$ and $v$ contain equally many occurrences of
$a$. The notion has been studied in the relation of abelian complexity of infinite
words \cite{DBLP:journals/aam/Blanchet-SadriF14,DBLP:journals/dm/MadillR13,DBLP:journals/jlms/RichommeSZ11,DBLP:journals/jalc/Saarela09}, abelian repetitions and avoidance  \cite{DBLP:journals/siamdm/RaoR18, DBLP:journals/eatcs/ConstantinescuI06,DBLP:conf/icalp/Keranen92,PeltomakiW2020:AvoidingAbelianPowersCyclically,DBLP:journals/ijfcs/CassaigneRSZ11}, other topics \cite{DBLP:journals/tcs/FiciMS17,PUZYNINA2013390,DBLP:conf/cwords/PeltomakiW19,PeltomakiW2020:AllGrowthRatesOfAbelianExponents}; see also \cite{DBLP:conf/cwords/Puzynina19} and references therein.

In this note we consider the so-called \emph{abelian closures} of infinite binary words. This
notion is a fairly recent one, and has thus far been considered only in the works
\cite{HejdaSteinerZamboni15,KarhumakiPW:on_abelian_subshifts,DBLP:conf/cwords/Puzynina19}, where the terms ``abelianization" and ``abelian subshift" were used. The notion is motivated by a notion in
discrete symbolic dynamics, namely, the \emph{shift orbit closure} of a word. For an
infinite word $\infw{x}$ we define the \emph{language} $\lang{\infw{x}}$ of $\infw{x}$ as the
set of finite words occurring as factors in $\infw{x}$. The \emph{shift orbit closure}
of an infinite word $\infw{x}$ can be defined as the set $\soc{\infw{x}}$ comprising those infinite words
$\infw{y}$ for which $\lang{\infw{y}} \subseteq \lang{\infw{x}}$. The shift orbit closure has
a discrete symbolic dynamical definition as well: the set $\infwords$ is a compact metric
space under the product topology induced by the discrete topology on the finite alphabet
$\alphabet$. The set $\soc{\infw{x}}$ then coincides with the closure of the orbit of
$\infw{x}$ under the \emph{shift map} $\sigma$, which is defined by
$\sigma(a_0a_1a_2\cdots) = a_1a_2\cdots$. Now $\soc{\infw{x}}$ is called a minimal subshift if it
contains no proper shift orbit closures. The abelian closure of an infinite word can be seen
as the ``commutative" counterpart of its shift orbit closure. The \emph{abelian closure}
$\abclsr{\infw{x}}$ of $\infw{x}$ is defined as the set of words $\infw{y}$ for which each
factor is abelian equivalent to some factor of $\infw{x}$.

The abelian closures of  infinite words can have diverse structures. Clearly, $\soc{\infw{x}} \subseteq \abclsr{\infw{x}}$ for any word $\infw{x}$.
For some words and families of words, for example, Sturmian words, the equality holds: $\soc{\infw{x}} = \abclsr{ \infw{x}}$. Moreover, the
property $ \soc{\infw{x}} = \abclsr{\infw{x}}$ characterizes Sturmian
words among uniformly recurrent binary words \cite{KarhumakiPW:on_abelian_subshifts}. On the other hand, it is easy to see that the
abelian closure of the Thue--Morse word $\infw{TM}$, defined
as the fixed point (starting with $0$) of the morphism $0\mapsto 01$, $1\mapsto 10$, is
$\{\eps,0,1\}\cdot \{01,10\}^{\mathbb{N}}$ (see, e.g., \cite{KarhumakiPW:on_abelian_subshifts} for a proof.)
So, contrary to Sturmian words, the abelian closure of the Thue--Morse is
huge compared to $\Omega_{\infw{TM}}$: essentially, it is a morphic
image of the full binary shift. In general, the abelian closure
of an infinite word might have a pretty complicated structure. T.~Hejda,
W.~Steiner, and L.Q.~Zamboni studied the abelian closure of
the Tribonacci word $\infw{TR}$. They announced that $\soc{\infw{TM}}$ is
a proper subset of $\abclsr{\infw{TM}}$ but that $\soc{\infw{TR}}$ is the only minimal
subshift contained in $\abclsr{\infw{TR}}$ \cite{HejdaSteinerZamboni15,ZamboniPersonal}.

In this paper we consider the abelian closures of binary words. Our main result states that
for an aperiodic uniformly recurrent binary word, its abelian closure contains infinitely many minimal subshifts, unless it is Sturmian (\autoref{thm:binary}). In many cases we are able to
prove that the abelian closure actually contains uncountably many minimal subshifts. We remark that in the 
non-binary case, there exist words with finitely many (and more than one) minimal
subshifts; for example, some balanced aperiodic words are like that (announced in
\cite{KarhumakiPW:on_abelian_subshifts}, see also \autoref{ex:balanced}).

The paper is structured as follows. In \autoref{sec:results} in we give some background and
state our main results.
In
\autoref{sec:preliminaries} we give more technical
prelimininaries we use in the proofs. In particular, we discuss initial properties of abelian
closures of binary words and give some background on Sturmian words. In \autoref{sec:rational}
we prove \autoref{thm:binary} for the easy cases of words which do not have uniform letter
frequencies or which have rational letter frequencies. These cases have been reported at the
DLT 2018 conference \cite{KarhumakiPW:on_abelian_subshifts}, but we give full proofs here for
the sake of completeness. We then prove the theorem for
$C$-balanced words with irrational letter frequencies in \autoref{sec:C-bal}. In \autoref{sec:tools} we develop some
tools we use for the proof of the last and the hardest case of non-balanced words with
irrational frequency, which we treat in \autoref{sec:non-bal}. In \autoref{sec:alt} we give
alternative proofs for some results. For example, we give a large family of words that have
uncountably many minimal subshifts in their abelian closures. In \autoref{sec:conclusions} we conclude with some open problems.

\section{Background and statement of main result}\label{sec:results}
In this section we give some preliminaries on abelian closures and state our main results.

For a finite word $u \in \words$, we let $|u|_a$ denote the number of occurrences of the letter
$a\in\Sigma$ in $u$. A \emph{factor} of a finite or an infinite word is any finite sequence of its consecutive letters. 
The \emph{Parikh vector} $\Psi(u)$ of a finite word $u\in \words$
is defined as $\Psi(x) = (|x|_a)_{a \in \alphabet}$.
The words $u$ and $v$ are \emph{abelian equivalent}, denoted by $u \sim v$, if 
their Parikh vectors coincide.
We let $\lang{\infw{x}}$ denote the language of factors of  an infinite word $\infw{x}$. We then call the set $\ablang[]{\infw{x}} = \{ \Psi(v) \colon v \in \lang[]{\infw{x}} \}$ its
\emph{abelian language}, and an element of $\ablang[]{\infw{x}}$ is referred to as an
\emph{abelian factor} of $\infw{x}$.
In symbols, the definition of the abelian closure of a word reads as follows.
\begin{definition}
The \emph{abelian closure} of
$\infw{x}\in \infwords$ is defined as
\begin{equation*}
    \abclsr{\infw{x}} = \{ \infw{y} \in \infwords \colon 
\ablang{\infw y} \subseteq \ablang{\infw{x}}\}.
\end{equation*}
\end{definition}
In other words, for any factor $u$ of $\infw{y} \in \abclsr{\infw{x}}$ there is a factor $v$ of $\infw{x}$ for which $u \sim v$.
An infinite word $\infw{x}$ is \emph{ultimately periodic} if we may write $\infw{x} = uv^{\omega}$,
i.e., the prefix $u$ is followed by an infinite repetition of a non-empty word $v$. If $u$ is
empty, then $\infw{x}$ is called \emph{purely periodic}. The word $\infw{x}$ is called \emph{aperiodic} if
it is not ultimately periodic. An infinite word $\infw{x}$ is called \emph{recurrent} if each
factor of $\infw{x}$ occurs infinitely many times in $\infw{x}$. We say that a factor $u$
occurs with bounded gaps if there exists $N \in \N$ such that each factor of length $N$
contains $u$. An infinite word $\infw{x}$ is called \emph{uniformly recurrent} if each factor
occurs with bounded gaps.

The term ``abelian subshift'' used in \cite{KarhumakiPW:on_abelian_subshifts} was motivated
by the symbolic dynamical terminology, which we employ here as well.
A \emph{subshift} $X \subseteq \infwords$, $X\neq \emptyset$, is
a closed set (with respect to the product topology of
$\infwords$) satisfying $\sigma(X)\subseteq X$,%
\footnote{Usually subshifts are defined as sets of \emph{bi-infinite} words,
in which case $\sigma(X) = X$ is required in the definition.}
where $\sigma$
is the \emph{shift operator} defined in the introduction. For a subshift $X\subseteq \infwords$ we let
$\lang{X} = \cup_{\infw{y} \in X} \lang{\infw{y}}$. A subshift
$X \subseteq \infwords$ is called \emph{minimal} if $X$ does not
properly contain any subshifts. Observe that two minimal subshifts
$X$ and $Y$ are either equal or disjoint. Let $\infw{x} \in
\infwords$. We let $\soc{\infw{x}}$ denote the \emph{shift
orbit closure} of $\infw{x}$, which may be defined as the subshift
$\{\infw y\in \infwords \colon \lang{\infw{y}} \subseteq
\lang{\infw{x}}\}$. Thus $\lang{\soc{\infw{x}}} =
\lang{\infw{x}}$ for any word $\infw{x} \in \infwords$. It is
known that $\soc{\infw{x}}$ is minimal if and only if $\infw{x}$ is
uniformly recurrent. For more on topic of subshifts we refer the reader to
\cite{LindMarcus95}. We remark that, for any $\infw{x}\in
\infwords$, the abelian closure $\abclsr{\infw{x}}$ is readily seen to be
a subshift.

Sturmian words can be defined in many equivalent ways; here we make of their characterization via balance.
\begin{definition}
An infinite word $\infw{x}\in\infwords$ is called $C$-\emph{balanced}, where $C$ is some positive integer, if for all $v,v'\in \lang{\infw{x}}$ with $|v| = |v'|$, we have
$||v|_a-|v'|_a|\leq C$ for all $a\in \Sigma$. If $\infw{x}$ is not $C$-balanced for any $C \in \N$, then $\infw{x}$
is called \emph{non-balanced}.

A $1$-balanced word is simply called \emph{balanced}. On the other hand, if $\infw{x}$ is not
$1$-balanced, then we call it \emph{unbalanced}.\footnote{Notice that non-balanced words are
unbalanced, but unbalanced words are not necessarily non-balanced.}
\end{definition}
Periodic and aperiodic Sturmian words can then be defined as recurrent balanced binary words
\cite{Morse10.2307/2371431}. It follows that, for each periodic or aperiodic Sturmian
word $\infw{s}$,
its abelian language $\ablang{\infw{x}}$  contains at most two elements of each length. We give more backgrounds on Sturmian words in Sections \ref{sec:preliminaries} and \ref{subsec:Standard}.

In \cite{KarhumakiPW:on_abelian_subshifts} we showed that Sturmian words can be characterized in terms of abelian closures:

\begin{theorem}[\cite{KarhumakiPW:on_abelian_subshifts}]\label{th:St}
Let $\infw{x}$ be uniformly recurrent binary word. Then $\abclsr{\infw{x}}$ contains exactly one minimal subshift if and only if $\infw{x}$ is periodic or aperiodic Sturmian.
\end{theorem}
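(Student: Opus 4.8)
The plan is to establish the two implications of the equivalence separately. For the implication ``$\infw{x}$ periodic or aperiodic Sturmian $\Rightarrow \abclsr{\infw{x}}$ contains exactly one minimal subshift'' I would prove the stronger equality $\abclsr{\infw{x}} = \soc{\infw{x}}$: since $\infw{x}$ is uniformly recurrent, $\soc{\infw{x}}$ is minimal, and a minimal subshift is the only minimal subshift contained in itself. As $\soc{\infw{x}} \subseteq \abclsr{\infw{x}}$ always holds, only the reverse inclusion is at issue. The key observation is that balance is an \emph{abelian} property: if $\infw{x}$ is Sturmian, then for each length $n$ the factors of $\infw{x}$ realise at most two values of $|\cdot|_1$, and these two values are consecutive integers; hence any $\infw{y}$ with $\ablang{\infw{y}}\subseteq\ablang{\infw{x}}$ is itself balanced and has the same letter frequency as $\infw{x}$.

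I would then split according to the type of $\infw{x}$. If $\infw{x}$ is aperiodic Sturmian with irrational slope $\alpha$, then such a $\infw{y}$ is a balanced binary word of frequency $\alpha$, hence an aperiodic Sturmian word of slope $\alpha$ (it cannot be eventually periodic, which would force rational frequency); since the language of an aperiodic Sturmian word depends only on its slope, $\lang{\infw{y}} = \lang{\infw{x}}$, i.e.\ $\infw{y}\in\soc{\infw{x}}$. If $\infw{x} = w^{\omega}$ is periodic Sturmian, with $w$ primitive of length $q$, then already at length $q$ the abelian language consists of a single Parikh vector (every length-$q$ factor of $w^{\omega}$ being a conjugate of $w$), so every length-$q$ window of $\infw{y}$ contains exactly $|w|_1$ ones; comparing consecutive windows (a telescoping argument) shows $\infw{y}$ is $q$-periodic, and its balance then identifies its period as a conjugate of $w$, so again $\infw{y}\in\soc{\infw{x}}$. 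Both steps rely only on classical facts about balanced and Christoffel/Sturmian words (balanced $\Rightarrow$ uniform frequency; balanced aperiodic binary $\Rightarrow$ Sturmian; Sturmian language determined by slope; balanced primitive periodic words are conjugates of a Christoffel word), recalled in \autoref{sec:preliminaries}.

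For the converse I would argue contrapositively: suppose $\infw{x}$ is uniformly recurrent, binary, but \emph{not} periodic or aperiodic Sturmian, equivalently (being recurrent) $\infw{x}$ is unbalanced. I must produce two distinct minimal subshifts inside $\abclsr{\infw{x}}$; one of them is $\soc{\infw{x}}$, minimal by uniform recurrence. Here I note that it would \emph{not} suffice to exhibit a single word in $\abclsr{\infw{x}}\setminus\soc{\infw{x}}$, since its orbit closure could still have $\soc{\infw{x}}$ as its only minimal subshift — one genuinely needs a second minimal subshift. If $\infw{x}$ is aperiodic, this is delivered by \autoref{thm:binary}, which in fact produces infinitely many minimal subshifts in $\abclsr{\infw{x}}$, so in that case nothing further is needed. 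If $\infw{x}$ is eventually periodic then, being uniformly recurrent, it is purely periodic with some period $q$ and frequency of $1$s equal to $p/q$; I take $\infw{z}$ to be the periodic Sturmian word of frequency $p/q$. Then $\infw{z}$ is balanced, so $\infw{z}\notin\soc{\infw{x}}$ (every word in $\soc{\infw{x}}$ has the unbalanced language $\lang{\infw{x}}$), while $\soc{\infw{z}}$ is minimal. It remains to check $\ablang{\infw{z}}\subseteq\ablang{\infw{x}}$: for each $n$, the $1$-counts of the length-$n$ windows of $\infw{x}$ change by at most one between consecutive positions and average to $np/q$ over one full period, so by a discrete intermediate value argument they form an interval of integers containing $\lfloor np/q\rfloor$ and $\lceil np/q\rceil$, which are precisely the $1$-counts realised by $\infw{z}$. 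Hence $\soc{\infw{z}}$ is a second minimal subshift of $\abclsr{\infw{x}}$.

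The genuine difficulty is concentrated entirely in the aperiodic half of the converse, namely in \autoref{thm:binary}; everything else above is bookkeeping around classical facts on balance and around the elementary periodic case. I therefore expect the proof of \autoref{th:St} to be short once \autoref{thm:binary} is available, and I expect the bulk of the technical work of the paper to lie in \autoref{thm:binary} itself, particularly in its non-balanced subcase.
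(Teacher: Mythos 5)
Your proposal is correct, but note that the paper itself offers no proof of \autoref{th:St} at all: the result is imported verbatim from \cite{KarhumakiPW:on_abelian_subshifts}, so there is no in-paper argument to compare against. Your reconstruction is sound on its own terms. The backward direction is the standard balance argument: $\ablang[n]{\infw{x}}$ having at most two, consecutive, weights per length forces any $\infw{y}\in\abclsr{\infw{x}}$ to be balanced with the same frequency, and the two subcases (irrational slope via ``balanced $+$ aperiodic $\Rightarrow$ Sturmian'' and ``same slope $\Rightarrow$ same language''; rational slope via the telescoping/period argument and the classification of balanced periodic words) are both handled correctly. In the converse, your purely periodic case is a clean application of \autoref{lem:continuity} and the Fekete bounds $\max_{v\in\lang[n]{\infw{x}}}|v|_1\geq \lceil np/q\rceil$, $\min_{v\in\lang[n]{\infw{x}}}|v|_1\leq\lfloor np/q\rfloor$, and your delegation of the aperiodic unbalanced case to \autoref{thm:binary} is legitimate and non-circular: the proofs in Sections \ref{sec:rational}--\ref{sec:non-bal} nowhere invoke \autoref{th:St} (it appears only in \autoref{ex:balanced}). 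You also correctly flag the one point where a naive argument would fail, namely that exhibiting a single word outside $\soc{\infw{x}}$ does not by itself yield a second \emph{minimal} subshift. The only caveat worth recording is that the original reference establishes ``at least two'' directly by a lighter argument, whereas your route obtains it as a corollary of the far stronger \autoref{thm:binary}; within this paper that is perfectly acceptable, and you acknowledge explicitly that the apparent brevity of your proof is bought entirely by the technical work behind \autoref{thm:binary}.
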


In fact, for Sturmian words we have $\abclsr{\infw{x}} = \soc{\infw{x}}$. We also
investigated how the property containing exactly one minimal subshift extends to non-binary
words, and we saw that there are many non-binary words with this property.

\begin{example}
Let $\infw s$ be a Sturmian word  and let
$\varphi: 0\mapsto 02, 1\mapsto 12$. Then $\abclsr{\varphi(\infw s)} = \soc{\varphi(\infw
s)}$ \cite{KarhumakiPW:on_abelian_subshifts}.
\end{example}

We also saw that in the non-binary case, there exist words with abelian closure containing more than one but finitely 
many minimal subshifts.

\begin{example}\label{ex:balanced}
Let
$\infw{f} = abaababaa\cdots$ be the \emph{Fibonacci word} over 
the alphabet $\{a,b\}$ defined as the fixed point of the morphism $\varphi: a\mapsto ab$,
$b\mapsto a$.
Consider the words $\infw{u}_1$ and $\infw{u}_2$ obtained from
$\infw{f}$ by replacing the $n$th occurrence of $a$ by the letter $n \pmod{3}$ (resp.,
$-n \pmod{3})$. So $\infw{u}_1 = 0b12b0b12 \cdots$ and
$\infw{u}_2 = 0b21b0b21\cdots$. The words have distinct factors as $2 1$ cannot occur in
$\infw{u}_1$. On the other hand, we have $\infw{u}_2 \in \abclsr{\infw{u}_1}$.
Indeed, it can be shown that the words $\infw{u}_1$ and $\infw{u}_2$ are balanced
(see \cite{Hubert00}). 
Moreover, for any factor $x$ of $\infw{u}_2$ with
$\Psi(x) = (|x|_0,|x|_1,|x|_2,|x|_b)$, $\ablang{\infw{u}_2}$ contains the elements
$(|x|_2,|x|_0,|x|_1,|x|_b)$ and $(|x|_1,|x|_2,|x|_0,|x|_b)$ (i.e., all the cyclic
permutations of the first three elements):
it can be straightforwardly shown that
$(|\varphi^{n}(a)|_a \pmod{3})_{n=0}^{\infty} = (11202210)^{\omega}$. The claim follows from the observations that
$\varphi^{n+k+2}(a) = \varphi^{n+k+1}(a)\varphi^{n+k}(a)$ and $\varphi^n(a)$ is a
prefix of $\varphi^{n+k}(a)$ for all $n,k\geq 0$.
The facts established above imply that at least two of the first three components of $\Psi(x)$ must be
equal. The same observations apply to factors of $\infw{u}_1$. To conclude, notice now that the $0$s in
$\infw{u}_1$ and $\infw{u}_2$ occur in the same positions. So the factor $x'$ in $\infw{u}_1$
occurring at the same position as $x \in \infw{u}_2$ has $\Psi(x') = (|x|_0, |x|_2,|x|_1,|x|_b)$. By a simple case analysis it can be seen that some cyclic permutation
of the first three elements of $\Psi(x')$ equals $\Psi(x)$.

It can be shown that $\abclsr{\infw{u}_1}$ contains exactly two minimal
subshifts: it is the union of $\soc{\infw{u}_1}$ and $\soc{\infw{u}_2}$. Indeed, identifying
the letters $0$, $1$, and $2$ as $a$ of any word $\infw{y}$ in $\abclsr{\infw{u}_1}$ results
in a Sturmian word that is in $\abclsr{\infw{f}}$, and thus in $\soc{\infw{f}}$ by
\autoref{th:St}. Further, removing all $b$s from $\infw{y}$ results in a word that is
in $\abclsr{(012)^{\omega}} = \soc{(012)^{\omega}} \cup \soc{(021)^{\omega}}$. From these observations, it
is then straightforward to conclude that $\infw{y}$ must be in the shift orbit closure of either $\infw{u}_1$ or $\infw{u}_2$.
\end{example}

As the main result of this paper, we show that contrary to the non-binary case, aperiodic
binary words can only contain either one minimal subshift (in the case of Sturmian words) or
infinitely many minimal subshifts:

\begin{theorem}\label{thm:binary}
Let $\infw{x}$ be a binary, aperiodic, uniformly recurrent word which is not Sturmian. Then
$\abclsr{\infw{x}}$ contains infinitely many minimal subshifts.
\end{theorem}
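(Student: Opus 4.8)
\noindent\emph{Outline of a proposed proof.}
The first thing I would do is translate the statement into elementary terms. For $n\ge 1$ let $\alpha_n$ and $\beta_n$ be, respectively, the smallest and the largest value of $|u|_1$ over factors $u$ of $\infw{x}$ of length $n$. Sliding a length-$n$ window along $\infw{x}$ changes $|u|_1$ by at most one at each step, so $\{\Psi(u)\colon u\in\lang{\infw{x}},\,|u|=n\}$ is exactly $\{(n-j,j)\colon\alpha_n\le j\le\beta_n\}$; hence $\infw{y}\in\abclsr{\infw{x}}$ if and only if $\alpha_{|u|}\le|u|_1\le\beta_{|u|}$ for every factor $u$ of $\infw{y}$. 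Concatenation of factors makes $(\alpha_n)$ superadditive and $(\beta_n)$ subadditive, so $\underline{\rho}:=\lim_n\alpha_n/n=\sup_n\alpha_n/n$ and $\bar{\rho}:=\lim_n\beta_n/n=\inf_n\beta_n/n$ exist and $\alpha_n\le\underline{\rho}\,n\le\bar{\rho}\,n\le\beta_n$ for all $n$, with the letter $1$ admitting a uniform frequency in $\infw{x}$ precisely when $\underline{\rho}=\bar{\rho}=:\rho$. Finally, since $\infw{x}$ is recurrent but not Sturmian it is not $1$-balanced, so there is a length $\ell_0$ with $\beta_{\ell_0}\ge\alpha_{\ell_0}+2$. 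The proof then splits according to the frequency of $1$ in $\infw{x}$.

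If $\infw{x}$ has no uniform frequency (i.e.\ $\underline{\rho}<\bar{\rho}$) or a rational uniform frequency, I would exhibit infinitely many purely periodic words in $\abclsr{\infw{x}}$; since pairwise non-conjugate primitive words have disjoint finite orbit closures, this already gives infinitely many minimal subshifts. In the first subcase, $\beta_n-\alpha_n$ grows linearly, so fixing a rational $p/q\in(\underline{\rho},\bar{\rho})$ and any finite word $w$ with $|w|=q$, $|w|_1=p$, the periodic word $w^{\omega}$ --- whose factors of length $n$ contain $(p/q)n$ ones up to a bounded error --- has all its factors fitting into $[\alpha_n,\beta_n]$ once $n$ is large; the finitely many short lengths are handled by choosing $w$ inside a sufficiently long factor of $\infw{x}$ (possible by uniform recurrence), and letting $w$ range over, say, words whose longest run of $0$s takes every value yields infinitely many non-conjugate choices. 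In the rational subcase one uses aperiodicity: the prefix discrepancy $D(n)=\nu_n-(p/q)n$, where $\nu_n$ is the number of $1$s in the length-$n$ prefix, is bounded but non-constant, so a factor $r$ of $\infw{x}$ running between two positions where $D$ takes the same value satisfies $|r|_1=(p/q)|r|$ exactly and has bounded abelian discrepancy; choosing those positions among the occurrences of a fixed long factor one can also force the factors of $r^{\omega}$ that straddle a period boundary to be genuine factors of $\infw{x}$, so that $r^{\omega}\in\abclsr{\infw{x}}$, and since $\infw{x}$ is aperiodic these words have unbounded minimal period, giving infinitely many minimal subshifts.

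The substantial case is that of an irrational frequency $\rho$, where $\infw{x}$ is necessarily unbalanced. Here $\alpha_n\le\rho n\le\beta_n$ together with $\rho\notin\Q$ force $\lfloor\rho n\rfloor$ and $\lceil\rho n\rceil$ into $[\alpha_n,\beta_n]$ for every $n$, so $\ablang{\infw{s}}\subseteq\ablang{\infw{x}}$ for the characteristic Sturmian word $\infw{s}$ of slope $\rho$, i.e.\ $\infw{s}\in\abclsr{\infw{x}}$; as $\infw{x}$ is not Sturmian, $\soc{\infw{s}}\neq\soc{\infw{x}}$ is a second minimal subshift. To produce infinitely many, my plan is to perturb $\infw{s}$. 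Choose a genuine factor $v_0$ of $\infw{x}$ of length $\ell_0$ with $|v_0|_1$ strictly larger than the maximal number of $1$s in a length-$\ell_0$ factor of $\infw{s}$ (such a $v_0$ exists by the unbalance, after possibly exchanging the roles of $0$ and $1$). For each uniformly recurrent, aperiodic set $S\subseteq\N$ of positions that is sparse enough and with large enough gaps, build a word $\infw{y}_S$ from $\infw{s}$ by splicing in one copy of $v_0$ at every position of $S$, the surrounding Sturmian blocks being chosen (e.g.\ through a suitable coding of $S$) so that $\infw{y}_S$ is uniformly recurrent. One then checks that $\infw{y}_S\in\abclsr{\infw{x}}$, and that sufficiently different sets $S$ --- for instance $S=\{\lfloor n\theta\rfloor\colon n\in\N\}$ as $\theta$ ranges over a set of irrationals --- give pairwise distinct minimal subshifts, in fact uncountably many with more care.

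The step I expect to be the main obstacle is verifying that $\infw{y}_S\in\abclsr{\infw{x}}$, that is, that the extra $1$s carried by the spliced copies of $v_0$ never push the number of $1$s of a window outside $[\alpha_n,\beta_n]$. Naively, a window meeting $k$ copies of $v_0$ overshoots the Sturmian count by about $k(|v_0|_1-\rho\ell_0)$, while $\beta_n$ only guarantees a surplus of $\beta_n-\rho n$, so one must make the spacing of $S$ dovetail precisely with the growth rate of the abelian slack $\beta_n-\rho n$ of $\infw{x}$. This is exactly where the $C$-balanced and non-balanced sub-cases genuinely diverge: when $\infw{x}$ is $C$-balanced the slack is merely bounded, so the splicing has to be engineered very tightly (one may need several distinct defects, or defects spread thinly over long windows, to stay within budget); when $\infw{x}$ is non-balanced the slack is unbounded, which gives room, but extracting it requires a quantitative understanding of \emph{where} and \emph{how fast} $\beta_n-\rho n$ --- equivalently, the positions and lengths of the pairs $0w0,1w1\in\lang{\infw{x}}$ --- grows, which I expect is the purpose of the dedicated machinery the paper develops for that case, and the hardest part of the argument.
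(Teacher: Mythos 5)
Your reductions to the Corridor Lemma, the Fekete-type facts, and the observation that non-Sturmian plus aperiodic plus recurrent forces unbalancedness all match the paper, and your ``no uniform frequency'' case is essentially sound (the paper instead drops in uncountably many Sturmian subshifts of slopes in $(\underline{\rho},\bar{\rho})$, but your periodic words work once you take them balanced of slope $p/q$ so that short straddling factors are controlled). The two remaining cases, however, contain genuine gaps.

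In the rational case your words $r^{\omega}$ need not lie in $\abclsr{\infw{x}}$. The only lower bound you have on the corridor is $\alpha_n\le\lfloor np/q\rfloor$ and $\lceil np/q\rceil\le\beta_n$, so for $n$ not a multiple of $q$ the corridor may consist of exactly the two values $\lfloor np/q\rfloor,\lceil np/q\rceil$ (e.g.\ for the image of a Sturmian word under $0\mapsto 01$, $1\mapsto 10$). A window of $r^{\omega}$ straddling a period boundary is a suffix of $r$ followed by a prefix of $r$, and its weight deviates from $(p/q)n$ by up to twice the oscillation of your discrepancy $D$; ``bounded'' is not enough, it must be at most $1$, which forces $r^{\omega}$ to be balanced, i.e.\ periodic Sturmian of slope $p/q$ --- and all of those generate the \emph{same} minimal subshift. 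Choosing $r$ between occurrences of a long factor only makes straddling windows up to that fixed length genuine factors of $\infw{x}$; longer straddling windows are not, since $\infw{x}$ is aperiodic. The paper gets around this by first proving (a non-trivial averaging argument using uniform recurrence and aperiodicity) that $\max_{v\in\lang[n]{\infw{x}}}|v|_1>np/q$ \emph{strictly}, so that at multiples of $q$ the corridor contains the three values $np/q-1,np/q,np/q+1$, and then applying the substitution $0\mapsto w01$, $1\mapsto w10$ (with $w01$ the standard word of slope $p/q$) to the \emph{full} binary shift: the images are exactly calibrated to use that one extra unit of slack and nothing more.

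In the irrational case you establish only two minimal subshifts ($\soc{\infw{s}}$ and $\soc{\infw{x}}$) and then propose splicing copies of an unbalanced factor $v_0$ into $\infw{s}$ along a sparse set $S$; but you yourself flag that verifying $\infw{y}_S\in\abclsr{\infw{x}}$ is the main obstacle and do not carry it out. This is where the theorem actually lives, and the difficulty is not merely quantitative: when $\infw{x}$ is $C$-balanced the total slack $\beta_n-\rho n$ is bounded, so each inserted heavy block must be compensated by adjacent \emph{light} Sturmian stretches at every window length simultaneously, and nothing in your setup produces or locates such stretches. The paper does not splice at all: in the $C$-balanced case it ``squeezes'' the graph of $\infw{x}$ below lines $y=\alpha x+C_2-\varepsilon$ for a carefully nested sequence of $\varepsilon$'s, showing each squeezed word stays in the closure and that consecutive squeezings have disjoint orbit closures because they retain wide factors of controlled recurrence; in the non-balanced case it runs a recursion through standard pairs, Sturmian morphisms, and the traffic-automaton map $F$, producing words whose shortest unbalanced factors have strictly increasing lengths. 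As it stands your proposal proves the theorem only when uniform frequencies fail to exist.
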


The proof consists of four parts treated in different ways: if $\infw{x}$ does not have 
uniform letter frequencies, the proof is almost immediate. If it has rational letter
frequencies, then using  \emph{standard words} (certain factors of Sturmian words)
we can show that its abelian closure contains uncountably many infinite subshifts (see
\autoref{prop:rationalFreqUncountable}).
The proof for words with irrational frequencies is harder, and is split into the cases of
$C$-balanced words and non-balanced words.

The proof for words which are $C$-balanced for some constant $C$ is provided in
\autoref{Prop:C-bal}. It is geometric in nature and is based on a so-called ``squeezing 
operation'' on infinite binary words. This operation does not extend the language of abelian factors 
of an infinite word, which allows to find infinitely many minimal subshifts in its abelian 
closure. 

The hardest case turns out to be for non-balanced words with irrational letter frequencies
(\autoref{prop:non-balancedAbelianSubshift}). The proof makes use of an operation similar to
the squeezing operation in the $C$-balanced case. Due to non-balancedness, the analysis is
heavily based on deep properties of Sturmian words and standard factorizations. We discuss 
these tools in \autoref{sec:tools}.

\section{Preliminaries and initial properties of abelian closures} \label{sec:preliminaries}

We recall some notation and basic terminology from the literature of combinatorics on
words. We refer the reader to \cite{lothaire1983combinatorics,MR1905123} for more on the
subject. The set of finite words over an alphabet $\alphabet$ is denoted by $\words$. The
empty word is denoted by $\eps$. We let $|w|$ denote the length of a word $w \in \words$. By
convention, $|\eps| = 0$.
The set of right infinite words is denoted by $\infwords$. We refer to infinite words in
boldface font.
Recall that the language $\lang{\infw{x}}$ of an infinite word $\infw{x} \in \infwords$ is the
set of factors of $\infw{x}$. 
The set of length $n$ factors of $\infw{x}$ is denoted by
$\lang[n]{\infw{x}}$, and the set of factors of length at most $n$ is denoted by
$\lang[\leq n]{\infw{x}}$. We use the same notation for finite words as well.

In this paper we are mainly interested in binary words, and we mainly use the alphabet $\{0,1\}$. For a finite binary word $u$,
the \emph{weight} of $u$ refers to $|u|_1$. A binary word is \emph{heavier} than another if it
has larger weight. Similarly it is called \emph{lighter}, if its weight is smaller. Two binary words of equal length are abelian equivalent if and only if they have
equal weight.

For $\infw{x}\in \infwords$
and $a\in\Sigma$, the limits
\begin{equation*}
    \supfreq[\infw{x}]{a} := \lim_{n\to\infty} \frac{\max_{v\in \lang[n]{\infw{x}}} |v|_a}{n}
    \quad \text{and} \quad
    \inffreq[\infw{x}]{a} := \lim_{n\to\infty}\frac{\min_{v\in \lang[n]{\infw{x}}}|v|_a}{n}
\end{equation*}
exist. Furthermore
\begin{equation*}
\supfreq[\infw{x}]{a} = \inf_{n\in\N} \frac{\max_{v\in \lang[n]{\infw{x}}}|v|_a}{n}
\quad \text{and} \quad
\inffreq[\infw{x}]{a} = \sup_{n\in\N}\frac{\min_{v\in \lang[n]{\infw{x}}}|v|_a}{n}.
\end{equation*}
These facts follow from Fekete's lemma, as $\max_{v\in \lang[n]{\infw{x}}} |v|_a$
(resp., $\min_{v\in \lang[n]{\infw{x}}}|v|_a$) is \emph{subadditive} (resp., \emph{superadditive}) with respect to $n$.
It thus follows that $\max_{v \in \lang[n]{\infw{x}}}|v|_a \geq \supfreq[\infw{x}]{a} n$ and
$\min_{v\in \lang[n]{\infw{x}}}|v|_a\leq \inffreq[\infw{x}]{a} n$ for all $n\in \N$. These facts
are used implicitly throughout the paper.
If $\inffreq[\infw{x}]{a} = \supfreq[\infw{x}]{a}$, we denote the common limit by $\freq[\infw{x}]{a}$ and we say that
\emph{$\infw{x}$ has uniform frequency of $a$}.

A morphism $f$ is a mapping $\words \to \Delta^*$, for alphabets $\alphabet$ and $\Delta$, such
that $f(uv) = f(u)f(v)$ for all words $u,v\in \words$. Notice that $f$ is completely defined by
the images of the letters of $\alphabet$. The morphic images of infinite words are defined in a
natural way. A morphism is called \emph{erasing} if $f(a) = \eps$ for some letter $a$. Otherwise it is called non-erasing. For a morphism $f:\Sigma\to \Delta^*$
and a subshift $X\subseteq \infwords$, we define $\varphi(X) = \cup_{\infw{x}\in X}\soc{\varphi(\infw{x})}$. When applying an erasing morphism to a subshift, we make sure that
no element of $X$ gets mapped to a finite word.

Sturmian words enjoy a plethora of different characterizations, and we shall use several of
them in this note. Unless otherwise stated, the results presented below can be found from the
excellent exposition \cite[\S2]{MR1905123}, to which we refer the reader for more on the
topic.

 The \emph{factor complexity function} $\complfunction{\infw{x}} \colon \N \to \N$ is defined by $\compl[\infw{x}]{n} = \# \lang[n]{\infw{x}}$ for each $n\in \N$. Similarly, we define the \emph{abelian complexity function} $\abcomplfunction{\infw{x}} \colon \N \to \N$ of $\infw{x}$ as $\abcompl[\infw{x}]{n} = \# \ablang[n]{\infw{x}}$.
The most commonly used definition of Sturmian words is given via the factor complexity function.
\begin{definition}
An infinite word $\infw{x}$ is Sturmian if $\compl[\infw{x}]{n} = n+1$ for each $n\in \N$.
\end{definition}
Notice that this definition implies that any Sturmian word is binary and is aperiodic by the
famous Morse--Hedlund theorem (see \autoref{thm:MorseHedlundConsecutive} for a formulation).
We shall also consider so-called \emph{periodic Sturmian words}, which we define
later on. To avoid confusion, we follow the convention that, when referring to Sturmian words,
we mean the aperiodic Sturmian words.

It is known that any Sturmian word is uniformly recurrent. Furthermore, a Sturmian word
$\infw{s}$ has irrational uniform letter frequencies. If $\freq[\infw{s}]{1}=\alpha$, then
$\infw{s}$ is called a Sturmian word of \emph{slope} $\alpha$.

As we mentioned in the previous section, Sturmian words can be equivalently defined via balance, and this characterization of Sturmian words is crucial to our considerations:

\begin{theorem}[{\cite[Thm.~2.1.5]{MR1905123}}]
An infinite binary word $\infw{x}$ is Sturmian if and only if it is balanced and aperiodic.
\end{theorem}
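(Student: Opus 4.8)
The plan is to relate the balance property to the factor complexity $\compl[\infw{x}]{n}$ through the combinatorics of \emph{special factors}. Call a factor $u$ \emph{right special} if $u0,u1\in\lang{\infw{x}}$, and \emph{left special} if $0u,1u\in\lang{\infw{x}}$. Counting right-extensions of the length-$n$ factors gives $\compl[\infw{x}]{n+1}-\compl[\infw{x}]{n}=\#\{\text{right special factors of length }n\}$, and counting left-extensions gives the same number for left special factors. Since $\compl[\infw{x}]{0}=1$ for a binary word, the condition $\compl[\infw{x}]{n}=n+1$ for all $n$ is equivalent to there being exactly one right special factor (equivalently, exactly one left special factor) of each length. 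Recall also that, by the Morse--Hedlund theorem, an aperiodic word satisfies $\compl[\infw{x}]{n}\ge n+1$ for all $n$, while a word of complexity $n+1$ is automatically aperiodic; this already settles the aperiodicity clause in both implications.

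For the implication that a balanced aperiodic word is Sturmian, I would prove the lemma: \emph{if $\infw{x}$ has two distinct right special factors of the same length, then $\infw{x}$ is unbalanced.} Given distinct right special factors $u\ne u'$ of length $n$, let $s$ be their longest common suffix and write $u=y\,0s$, $u'=y'\,1s$ (the letters preceding $s$ must differ, by maximality of $s$). Since $u0,u1\in\lang{\infw{x}}$ and $u$ ends in $0s$, the suffixes $0s0,0s1$ are factors; likewise $u'$ ends in $1s$, so $1s0,1s1\in\lang{\infw{x}}$. In particular $0s0$ and $1s1$ are factors of equal length $|s|+2$ whose weights differ by $2$, so $\infw{x}$ is unbalanced. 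Contrapositively, a balanced word has at most one right special factor of each length, whence $\compl[\infw{x}]{n+1}-\compl[\infw{x}]{n}\le 1$ and $\compl[\infw{x}]{n}\le n+1$; combined with $\compl[\infw{x}]{n}\ge n+1$ from aperiodicity, this yields $\compl[\infw{x}]{n}=n+1$, i.e.\ $\infw{x}$ is Sturmian.

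The reverse implication, that a Sturmian word is balanced, is the harder direction. Here I would invoke the classical \emph{balance lemma}: if $\infw{x}$ is not balanced then, choosing a pair of equal-length factors of minimal length whose weights differ by at least $2$, one extracts a palindrome $w$ with $0w0,1w1\in\lang{\infw{x}}$. Such a $w$ is both left and right special, so under $\compl[\infw{x}]{n}=n+1$ it is \emph{the} unique right special factor of length $|w|$, and hence the unique right special factor of length $|w|+1$ must have $w$ as a suffix, i.e.\ it equals $0w$ or $1w$. In the \enquote{pure} case, where the only witnessed extensions of $w$ are $0w0$ and $1w1$ (neither $0w1$ nor $1w0$ is a factor), neither $0w$ nor $1w$ is right special; thus there is \emph{no} right special factor of length $|w|+1$, forcing $\compl[\infw{x}]{|w|+2}=\compl[\infw{x}]{|w|+1}$ and contradicting $\compl[\infw{x}]{n}=n+1$.

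I expect the main obstacle to be the remaining \enquote{mixed} case, in which $w$ also admits a third extension such as $0w1$: then exactly one of $0w,1w$ is right special and one of $w0,w1$ is left special, and no contradiction is visible at length $|w|+1$ alone. To close this case one must exploit the global structure forced by $\compl[\infw{x}]{n}=n+1$: the unique right special factors of successive lengths form a nested chain of suffixes and the unique left special factors form a nested chain of prefixes (the latter converging to the characteristic word), and both chains pass through the bispecial palindrome $w$. Tracking how the diagonal pair $0w0,1w1$ propagates along these chains, using the palindromicity of $w$ together with the recurrence of $\infw{x}$, should produce two special factors of a common length and hence the desired contradiction with $\compl[\infw{x}]{n}=n+1$. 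Carrying out this propagation carefully is the technical heart of the argument.
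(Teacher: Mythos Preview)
The paper does not prove this theorem; it merely cites it from Lothaire. So there is no paper proof to compare against, only the question of whether your argument is complete. Your forward direction (balanced and aperiodic implies complexity $n+1$) is correct and standard.

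Your reverse direction, however, has an explicitly acknowledged gap in the ``mixed'' case, and the propagation idea you sketch is not the efficient way to close it. The clean argument, which the paper in fact sketches later inside the proof of \autoref{lem:shortestUnbalancedPairBalancedSet}, runs as follows. With $0w0,1w1\in\lang{\infw{x}}$ and $w$ the unique right special factor of its length, exactly one of $0w$, $1w$ is right special; say it is $0w$. Then $1w$ is always followed by $1$. Now take any factor $1w1v$ of length $2|w|+2$ and observe that $0w$ cannot occur in $1w1v$: by the palindromicity of $w$ an occurrence of $0w$ would force an occurrence of $w0$, hence of $1w0$ (since we are inside a word beginning with $1w1$ and $w$ is the unique left special factor as well), contradicting that $1w$ is always followed by $1$. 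Thus $1w1v$ contains $|w|+2$ factors of length $|w|+1$, none of which is the unique right special factor $0w$; hence these factors, together with $0w$ and its two extensions, already exceed the $|w|+2$ factors allowed by complexity $n+1$. Alternatively, one argues that since no length-$(|w|+1)$ factor of $1w1v$ is right special, the suffix starting at $1w1$ is eventually periodic, contradicting aperiodicity. Either way, the contradiction is obtained at length $|w|+1$ or $|w|+2$ without any ``propagation along chains''; your proposed approach through nested chains of special factors is more elaborate than necessary and, as stated, not yet a proof.
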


Next we consider the structure of factors of Sturmian words.
We recall the so-called \emph{standard pairs} from \cite[Section~2.2]{MR1905123}.
Define two selfmaps $\Gamma$ and $\Delta$ on $\{0,1\}^*\times \{0,1\}^*$ by
\begin{equation*}
	\Gamma(u,v) = (u,uv),\quad \Delta(u,v) = (vu,v).
\end{equation*}

\begin{definition}
	The set of \emph{standard pairs} is the smallest set of pairs of binary words
	containing the pair $(0,1)$ and which is closed under $\Gamma$ and $\Delta$. A \emph{standard word} is any component of a standard pair.

	A word $w$ is called \emph{central}, if $w01$ (or equivalently $w10$) is a standard word.
\end{definition}
For example, the pairs $\Gamma^n(0,1) = (0,0^n1)$ 
and $\Delta^n(0,1) = (1^n0,1)$ are
standard pairs for any $n\geq 0$. Here $1^{n-1}$ and $0^{n-1}$ are central words. These are the only
standard pairs for which one of the components is a letter. Notice also that for a standard
pair $(u,v)$, either $u$ is a letter or $u$ ends with $10$. Similarly either $v$ is a letter or
$v$ ends with $01$. Recall that for a central word $w$ we have that $w01$ is a standard word that ends with $01$. It follows that $w01$ (resp., $w10$) can be expressed as the product $xy$ (resp., $yx$) for a standard pair $(x,y)$. In fact, such a standard pair is unique (see
\cite[Prop.~2.2.1]{MR1905123}).

\begin{definition}
Let $(a_n)_{n\geq 1}$ be a sequence of integers with $a_1 \geq 0$ and $a_n > 0$ for $n > 1$.
We define a sequence of words $S_{-1} = 1$, $S_0 = 0$, and
$S_{n} = S_{n-1}^{a_n} S_{n-2}$ for $n \geq 1$. The sequence $(a_n)_{n\geq 1}$ is called a \emph{directive sequence} and $(S_n)_{n\geq -1}$ is called a \emph{standard sequence}.
\end{definition}
It can be shown that each element $S_n$ of a standard sequence is a standard word. Conversely,
every standard word occurs in some standard sequence. If $a_1 > 0$, then each of the words
$S_n$, $n\geq 0$ starts with $0$. If $a_1 = 0$, then $S_1 = S_{-1} = 1$ and each of the words
$S_n$, $n\geq 1$, starts with $1$. For $n \geq 1$ we have that $S_{2n+1}$ ends with $01$, while $S_{2n}$ ends with $10$.

A standard sequence $(S_n)_{n\geq 1}$ has the property that
$\lim_{n \to \infty} S_n = \infw{s}$ is a Sturmian word. Such a word is called a
\emph{characteristic Sturmian word}. It is the unique element of $\soc{\infw{s}}$ for which
both $0 \infw{s}$ and $1\infw{s} \in \soc{\infw{s}}$. For each directive sequence
$(a_n)_{n\geq 1}$ there is a unique irrational number $\alpha$, such that the corresponding
characteristic Sturmian word $\infw{s}$ has $\freq[\infw{s}]{1} = \alpha$. Conversely, for any
irrational $\alpha \in (0,1)$ there is a corresponding directive sequence which produces the
characteristic Sturmian word having $\freq[\infw{s}]{1} = \alpha$. 

\begin{example}
The \emph{Fibonacci word} $\infw{f} = 01001010\cdots$ is the characteristic Sturmian word
defined by the directive sequence $(1)_{n = 0}^{\infty}$. The directive sequence $(0,1,1,\ldots)$ gives
the Fibonacci word by exchanging $0$ and $1$. The Fibonacci word is the characteristic Sturmian
word of slope $1/\varphi^2$, where $\varphi$ is the \emph{golden ratio}.
\end{example}

Periodic Sturmian words can be equivalently defined as follows:

\begin{definition}
A word is called \emph{periodic Sturmian} if it is an element of $\soc{S^{\omega}}$ for some
standard word $S$.
\end{definition}
To a periodic Sturmian word we may associate a directive sequence and a standard sequence. The
difference is that the directive sequence is finite (with the final element $\omega$). The
slope of a periodic Sturmian word is of course rational, and any rational number is a slope of
some periodic Sturmian word (see \cite[Prop.~2.2.15]{MR1905123}). If two periodic Sturmian words have the same slope, then they define the same shift orbit closure, similar to their
aperiodic counterparts (this can be inferred from the fact that the standard words $xy$ and
$yx$, for a standard pair $(x,y)$, define periodic Sturmian words that are shifts of each other). Hence, we have that the interval $[0,1]$ coincides with the family of slopes of
periodic and aperiodic Sturmian words.

Periodic and aperiodic Sturmian words are exactly the \emph{recurrent} balanced binary words
\cite{Morse10.2307/2371431}. It follows that, for each periodic or aperiodic Sturmian
word $\infw{s}$, the abelian language $\ablang[n]{\infw{x}}$ consists of at most two elements.
(For aperiodic Sturmian words it is always equal to $2$, as it is easy to see that
a word having $\abcompl[\infw{x}]{n} = 1$ for some $n$ is purely periodic \cite{DBLP:journals/mst/CovenH73}. There
also exist non-recurrent balanced binary words. For us, it suffices to know that for
any standard word $S$, the words $0S^{\omega}$ and $1S^{\omega}$ are balanced.

 We now recall some preliminary observations on
abelian closures of infinite words. The results appear in \cite{KarhumakiPW:on_abelian_subshifts} unless otherwise stated.

\begin{lemma}\label{lem:uniformFrequencies}
Assume $\infw{x}\in\infwords$ has uniform frequency of a letter
$a\in\Sigma$. Then any word $\infw y\in \abclsr{\infw{x}}$ has
uniform frequency of $a$ and $\freq[\infw{y}]{a}= \freq[\infw{x}]{a}$.
\end{lemma}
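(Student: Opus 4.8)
The plan is to sandwich the upper and lower frequencies of $\infw{y}$ between $\freq[\infw{x}]{a}$ and itself, using nothing more than the fact — recorded in the excerpt — that for \emph{every} infinite word the limits defining $\supfreq[\cdot]{a}$ and $\inffreq[\cdot]{a}$ exist, together with the trivial observation that abelian equivalence preserves length.

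First I would note that if $u \sim v$ then $|u|=|v|$, so for $\infw{y}\in\abclsr{\infw{x}}$ and every $n\in\N$, each factor $v\in\lang[n]{\infw{y}}$ is abelian equivalent to some factor $w\in\lang[n]{\infw{x}}$ of the \emph{same} length $n$, whence $|v|_a=|w|_a$. Taking maxima and minima over length-$n$ factors this yields
\[
\max_{v\in\lang[n]{\infw{y}}}|v|_a \;\le\; \max_{w\in\lang[n]{\infw{x}}}|w|_a
\qquad\text{and}\qquad
\min_{v\in\lang[n]{\infw{y}}}|v|_a \;\ge\; \min_{w\in\lang[n]{\infw{x}}}|w|_a
\]
for all $n\in\N$.

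Dividing by $n$ and letting $n\to\infty$ (all four limits exist by Fekete's lemma, as discussed before the statement), and using the hypothesis $\inffreq[\infw{x}]{a}=\supfreq[\infw{x}]{a}=\freq[\infw{x}]{a}$, I obtain
\[
\supfreq[\infw{y}]{a} \;\le\; \supfreq[\infw{x}]{a} \;=\; \freq[\infw{x}]{a} \;=\; \inffreq[\infw{x}]{a} \;\le\; \inffreq[\infw{y}]{a}.
\]
Since $\inffreq[\infw{y}]{a}\le\supfreq[\infw{y}]{a}$ always holds (the minimum of $|v|_a$ over $\lang[n]{\infw{y}}$ never exceeds the maximum, for each $n$), all these quantities coincide and equal $\freq[\infw{x}]{a}$; this is precisely the assertion that $\infw{y}$ has uniform frequency of $a$ equal to $\freq[\infw{x}]{a}$.

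There is essentially no real obstacle here: the only points needing (minor) care are that abelian equivalence preserves word length and that $\supfreq[\infw{y}]{a},\inffreq[\infw{y}]{a}$ are defined for an arbitrary infinite word $\infw{y}$ — both of which are already available from the preliminaries.
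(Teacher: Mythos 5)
Your proof is correct, and it is the natural argument: the paper itself omits the proof (citing the DLT 2018 paper), but your chain of inequalities is exactly the one-letter, general-alphabet version of the \hyperref[lem:corridorLemma]{Corridor Lemma} combined with the Fekete-based existence of $\supfreq[\cdot]{a}$ and $\inffreq[\cdot]{a}$ stated in the preliminaries. The two points you flag — that abelian equivalence preserves length, so the comparison is between factors of the same length $n$, and that the four limits exist for arbitrary infinite words — are indeed the only things that need checking, and both are covered.
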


We immediately have that if $\infw{x}$ has an irrational uniform frequency
of some letter $a$, then $\abclsr{\infw{x}}$ contains only aperiodic words.
We continue by observing how the abelian closures of
periodic and ultimately periodic words can differ.

\begin{proposition}\label{prop:periodicFinite}
For any purely periodic word $\infw{x}$, the abelian closure $\abclsr{\infw{x}}$ is finite.
\end{proposition}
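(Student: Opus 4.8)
The plan is to reduce the statement to the fact, already invoked in the excerpt (from \cite{DBLP:journals/mst/CovenH73}), that a word $\infw{y}$ with $\abcompl[\infw{y}]{n} = 1$ for some $n$ is purely periodic. Write $\infw{x} = w^{\omega}$ and set $p = |w|$. The first step is the elementary observation that every factor of $\infw{x}$ of length exactly $p$ is a cyclic conjugate of $w$; indeed, a window of $p$ consecutive positions in a $p$-periodic word is a rotation of one period. Consequently all length-$p$ factors of $\infw{x}$ have the same Parikh vector $\Psi(w)$, so $\ablang[p]{\infw{x}} = \{\Psi(w)\}$ is a singleton.

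Next I would take an arbitrary $\infw{y}\in\abclsr{\infw{x}}$. By definition $\ablang{\infw{y}}\subseteq\ablang{\infw{x}}$, hence $\ablang[p]{\infw{y}}\subseteq\{\Psi(w)\}$, that is, $\abcompl[\infw{y}]{p} = 1$: every factor of $\infw{y}$ of length $p$ has Parikh vector $\Psi(w)$. Comparing the length-$p$ factors of $\infw{y}$ beginning at two consecutive positions $i$ and $i+1$ (one is obtained from the other by deleting the first letter and appending the next one), equality of Parikh vectors forces the deleted and appended letters to coincide, i.e.\ $y_i = y_{i+p}$ for all $i\geq 0$. Thus $\infw{y}$ is purely periodic with $\infw{y} = (y_0\cdots y_{p-1})^{\omega}$; alternatively one may just cite \cite{DBLP:journals/mst/CovenH73} directly for pure periodicity with period dividing $p$.

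Finally, the set of all such words is contained in $\{\,u^{\omega} \colon u\in\alphabet^{p}\,\}$, which has at most $(\#\alphabet)^{p}$ elements, so $\abclsr{\infw{x}}$ is finite. I do not expect any genuine obstacle in this argument; the only point that needs to be stated carefully is the collapse of the abelian language at length $p$ to a single Parikh vector, everything else being a direct consequence of the definition of $\abclsr{\cdot}$ together with the cited rigidity result for abelian complexity equal to one.
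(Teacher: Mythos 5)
Your argument is correct and is essentially the standard proof (the paper itself only cites \cite{KarhumakiPW:on_abelian_subshifts} for this statement): the collapse of $\ablang[p]{\infw{x}}$ to the single vector $\Psi(w)$ forces, via the sliding-window comparison, $y_i = y_{i+p}$ for every $\infw{y}\in\abclsr{\infw{x}}$, whence $\abclsr{\infw{x}}$ sits inside the finite set of $p$-periodic words. Note that the appeal to \cite{DBLP:journals/mst/CovenH73} is not even needed, since your explicit sliding-window step already yields pure periodicity with period $p$.
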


The abelian closure of an ultimately, but not purely periodic word can be huge; in fact, it can 
contain uncountably many minimal subshifts. This was already observed in \cite{HejdaSteinerZamboni15}, and further examples
were given in \cite[Ex.~2]{KarhumakiPW:on_abelian_subshifts}.

We conclude this section by recalling two rather straightforward observations, which will be 
used throughout the paper. The first one is immediate by a "sliding window" argument and is
well-known in the literature. The second one is a straightforward consequence of the first.

\begin{lemma}[Continuity of abelian complexity]\label{lem:continuity}
Let $\infw{u}$ be an infinite binary word and $(s_1,t_1)$ and $(s_2,t_2)$ with $s_1<s_2$ be two
elements of $\ablang[n]{\infw{u}}$. Then each $(s,t)$ with $s+t=n$ and $s_1<s<s_2$ is an element of $\ablang[n]{\infw{u}}$.
\end{lemma}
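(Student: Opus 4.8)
The plan is to prove \autoref{lem:continuity} by a standard sliding-window (discrete intermediate value) argument applied to the windows of a fixed length $n$ inside a single infinite binary word.

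First I would fix witnesses for the two given abelian factors. Since $(s_1,t_1),(s_2,t_2)\in\ablang[n]{\infw{u}}$ with $s_1<s_2$, there exist factors $v_1,v_2\in\lang[n]{\infw{u}}$ with $\Psi(v_1)=(s_1,t_1)$ and $\Psi(v_2)=(s_2,t_2)$; in the binary case this just says $|v_1|_0=s_1$ and $|v_2|_0=s_2$. Write $\infw{u}=a_0a_1a_2\cdots$ and, for each index $i\geq 0$, let $w_i=a_ia_{i+1}\cdots a_{i+n-1}\in\lang[n]{\infw{u}}$ be the length-$n$ window starting at position $i$, and set $g(i)=|w_i|_0$. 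The key structural fact is that consecutive windows differ in at most one position: $w_{i+1}$ is obtained from $w_i$ by deleting the letter $a_i$ from the front and appending $a_{i+n}$, so $|g(i+1)-g(i)|\leq 1$ for every $i$. In other words $g$ is a ``slowly varying'' integer sequence: it can move up or down by at most one at each step.

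Next I would locate indices realizing the two extreme values. Choose $i_1$ with $g(i_1)=s_1$ and $i_2$ with $g(i_2)=s_2$; such indices exist because $v_1$ and $v_2$ occur as factors of $\infw{u}$, hence occur as some window $w_{i_1}$, $w_{i_2}$. Without loss of generality assume $i_1<i_2$ (the case $i_2<i_1$ is symmetric, running the window in the same direction). Now for any target value $s$ with $s_1<s<s_2$, the sequence $g$ starts at value $g(i_1)=s_1<s$ and ends at value $g(i_2)=s_2>s$, and changes by at most $1$ at each step; by the discrete intermediate value property there is some index $i$ with $i_1\leq i\leq i_2$ and $g(i)=s$. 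Then $w_i\in\lang[n]{\infw{u}}$ is a length-$n$ factor with $|w_i|_0=s$ and hence $|w_i|_1=n-s=t$, so $(s,t)\in\ablang[n]{\infw{u}}$, as required.

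There is no real obstacle here; the only point requiring a word of care is making sure the window indices and the endpoints $v_1,v_2$ are genuinely realized as occurrences in the \emph{same} infinite word $\infw{u}$ (which they are, by definition of $\lang[n]{\infw{u}}$), so that the sliding-window argument applies to a single sequence $g$. One could alternatively phrase the discrete intermediate value step by induction on $s_2-s_1$, peeling off the window that first attains any intermediate value, but the direct argument above is cleanest. The statement and proof are standard in the literature on abelian complexity, so I would keep the write-up brief.
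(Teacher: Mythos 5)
Your proof is correct and is exactly the sliding-window (discrete intermediate value) argument that the paper itself invokes: the paper states \autoref{lem:continuity} without proof, remarking only that it ``is immediate by a sliding window argument and is well-known in the literature.'' Your write-up simply makes that standard argument explicit, with no gaps.
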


\begin{lemma}[Corridor Lemma]\label{lem:corridorLemma}
Let $\infw{x}$ be a binary word. Then $\infw y\in \abclsr{\infw{x}}$ if and only if, for
all $n\in\N$,
\begin{align*}
\min\{|v|_1\colon v\in \lang[n]{\infw{y}}\} &\geq \min\{|v|_1\colon v\in \lang[n]{\infw{x}}\} \text{ and}\\
\max\{|v|_1\colon v\in \lang[n]{\infw y}\} &\leq \max\{|v|_1\colon v\in \lang[n]{\infw{x}}\}.
\end{align*}
\end{lemma}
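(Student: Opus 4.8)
The plan is to prove the Corridor Lemma (\autoref{lem:corridorLemma}) by reducing it to \autoref{lem:continuity}. First I would unfold the definition of the abelian closure in the binary setting: since two binary words of equal length are abelian equivalent precisely when they have equal weight, membership $\infw y \in \abclsr{\infw x}$ is equivalent to $\ablang[n]{\infw y} \subseteq \ablang[n]{\infw x}$ for every $n$, where each abelian language of length $n$ is identified with the set of weights $W_n(\infw z) := \{|v|_1 : v \in \lang[n]{\infw z}\} \subseteq \{0,1,\dots,n\}$. So the statement to prove is: $W_n(\infw y) \subseteq W_n(\infw x)$ for all $n$ if and only if $\min W_n(\infw y) \ge \min W_n(\infw x)$ and $\max W_n(\infw y) \le \max W_n(\infw x)$ for all $n$.

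The forward direction is immediate: if $W_n(\infw y) \subseteq W_n(\infw x)$ then in particular $\min W_n(\infw y) \in W_n(\infw x)$, so $\min W_n(\infw y) \ge \min W_n(\infw x)$, and symmetrically for the maxima. The content is in the converse. Here the key observation is that \autoref{lem:continuity} says each $W_n(\infw z)$ is an \emph{interval} of integers: it contains every integer between its least and greatest elements. (This is the "sliding window" fact — reading a length-$n$ window across the word changes the weight by at most $1$ at each step, so all intermediate weights are attained.) Granting this, suppose $\min W_n(\infw y) \ge \min W_n(\infw x)$ and $\max W_n(\infw y) \le \max W_n(\infw x)$. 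Then $W_n(\infw y) = [\min W_n(\infw y), \max W_n(\infw y)] \cap \Z \subseteq [\min W_n(\infw x), \max W_n(\infw x)] \cap \Z = W_n(\infw x)$, where the set-bracket notation denotes integer intervals and the inclusion is just the containment of one interval in another with wider endpoints. Since this holds for every $n$, we get $\infw y \in \abclsr{\infw x}$.

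I do not expect a genuine obstacle here; the lemma is explicitly billed as "a straightforward consequence of the first." The only point requiring a little care is making sure \autoref{lem:continuity} is being applied to extract that both $W_n(\infw x)$ and $W_n(\infw y)$ are integer intervals — the lemma is stated for a single infinite binary word $\infw u$, so one applies it once with $\infw u = \infw x$ and once with $\infw u = \infw y$. One should also note the edge case where $W_n(\infw y)$ is a singleton (which happens, e.g., when $\infw y$ is eventually constant on windows of length $n$), but this is harmless: a singleton $\{w\}$ with $\min W_n(\infw x) \le w \le \max W_n(\infw x)$ still lies in the interval $W_n(\infw x)$. Thus the whole proof is: forward direction by taking min/max; converse by invoking interval-ness from \autoref{lem:continuity} and comparing endpoints.
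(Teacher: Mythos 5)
Your proof is correct and follows exactly the route the paper intends: the paper gives no explicit proof, merely noting that the Corridor Lemma is "a straightforward consequence" of the continuity lemma, which is precisely your argument (forward direction by taking min/max, converse by the integer-interval property of the weight sets). The only cosmetic remark is that for the converse you really only need interval-ness of $W_n(\infw{x})$, since any $w \in W_n(\infw{y})$ already satisfies $\min W_n(\infw{x}) \le w \le \max W_n(\infw{x})$ by the hypotheses.
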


\section{Rational letter frequencies and no letter frequencies}\label{sec:rational}

In this section, we prove easy parts of \autoref{thm:binary}: the case when letter frequencies
do not exist, and the case when they exist and are rational. As mentioned previously, the
results were reported in \cite{KarhumakiPW:on_abelian_subshifts}. We give the full proofs here
for the sake of completeness, and we will further discuss further aspects of them
in \autoref{sec:alt}.

\begin{proposition}\label{prop:NoFrequency}
Let $\infw{x}$ be a binary word having no uniform letter frequencies. Then
$\abclsr{\infw{x}}$ contains uncountably many
minimal subshifts.
\end{proposition}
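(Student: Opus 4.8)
The hypothesis is that $\infw{x}$ has no uniform letter frequency, say for the letter $1$; this means $\inffreq[\infw{x}]{1} < \supfreq[\infw{x}]{1}$. Write $\underline\alpha = \inffreq[\infw{x}]{1}$ and $\overline\alpha = \supfreq[\infw{x}]{1}$, so $\underline\alpha < \overline\alpha$. The idea is that the abelian closure is governed by the "corridor" described in \autoref{lem:corridorLemma}: a word $\infw{y}$ lies in $\abclsr{\infw{x}}$ precisely when, for every $n$, its factors of length $n$ have weight between $\min\{|v|_1 : v \in \lang[n]{\infw{x}}\}$ and $\max\{|v|_1 : v \in \lang[n]{\infw{x}}\}$. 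Since $\max_{v\in\lang[n]{\infw{x}}}|v|_1 \geq \overline\alpha\, n$ and $\min_{v\in\lang[n]{\infw{x}}}|v|_1 \leq \underline\alpha\, n$ for all $n$ (the subadditivity/superadditivity facts recalled in the preliminaries), the corridor has width at least $(\overline\alpha - \underline\alpha)\, n$, which grows linearly. So there is a lot of room inside the corridor, and the plan is to exhibit uncountably many pairwise disjoint minimal subshifts that fit inside it.

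First I would fix any rational $p/q \in (\underline\alpha, \overline\alpha)$ and consider the purely periodic word $\infw{w}_{p/q} = (0^{q-p}1^p)^\omega$ — or more carefully, a \emph{balanced} periodic word of slope $p/q$, i.e. $\soc{S^\omega}$ for a standard word $S$ with $|S|_1/|S| = p/q$; such an $S$ exists by \cite[Prop.~2.2.15]{MR1905123}. For a balanced periodic Sturmian word of slope $p/q$, the weight of every length-$n$ factor is either $\lfloor p n / q\rfloor$ or $\lceil p n / q\rceil$, hence within $1$ of $pn/q$. I would check, using \autoref{lem:corridorLemma}, that $S^\omega \in \abclsr{\infw{x}}$: for every $n$ we need $\lceil pn/q\rceil \leq \max_{v\in\lang[n]{\infw{x}}}|v|_1$ and $\lfloor pn/q\rfloor \geq \min_{v\in\lang[n]{\infw{x}}}|v|_1$. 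The first holds for all large $n$ because $pn/q < \overline\alpha\, n$ with a linear gap, eventually dominating the additive error; similarly the second holds for all large $n$. For the finitely many small $n$ one argues directly — or, more robustly, one replaces $S^\omega$ by a word that agrees with $\infw{x}$ on a long enough prefix before settling into the periodic pattern, as in the ultimately-periodic constructions of \cite[Ex.~2]{KarhumakiPW:on_abelian_subshifts}. Either way, $\abclsr{\infw{x}}$ contains $\soc{S^\omega}$, a minimal subshift.

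To get \emph{uncountably} many minimal subshifts, rather than just infinitely many, I would instead build, for each real $\beta \in (\underline\alpha, \overline\alpha)$, an aperiodic uniformly recurrent word $\infw{y}_\beta$ of slope $\beta$ whose abelian complexity stays bounded — concretely, a \emph{Sturmian} word of slope $\beta$ (irrational $\beta$; rationals are a null set so this loses nothing). For a Sturmian word $\infw{y}_\beta$, every length-$n$ factor has weight $\lfloor \beta n\rfloor$ or $\lceil \beta n\rceil$, so again the weights live within $1$ of the line $\beta n$. By the same linear-gap argument as above, $\infw{y}_\beta \in \abclsr{\infw{x}}$ for all $\beta$ in a slightly smaller open subinterval of $(\underline\alpha,\overline\alpha)$ once $n$ is large, and the small-$n$ cases are handled by prepending a suitable finite prefix of $\infw{x}$ (this keeps uniform recurrence only if done with care; alternatively one prepends nothing and simply checks the finitely many small lengths directly, shrinking the interval of admissible $\beta$ as needed). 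Distinct slopes give distinct letter frequencies, hence by \autoref{lem:uniformFrequencies} the subshifts $\soc{\infw{y}_\beta} = \abclsr{\infw{y}_\beta}$ — minimal, since Sturmian words are uniformly recurrent — are pairwise disjoint. As there are uncountably many irrationals in the interval, this produces uncountably many minimal subshifts inside $\abclsr{\infw{x}}$.

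\textbf{Main obstacle.} The linear-gap estimate takes care of all sufficiently large $n$ for free, so the only real issue is the finitely many small lengths $n$, where the additive error term in the bounds $\max \geq \overline\alpha n$, $\min \leq \underline\alpha n$ can dominate. I expect the cleanest fix is not to use a bare periodic or Sturmian word but an \emph{ultimately periodic} (resp.\ eventually-Sturmian-like) word that copies a long prefix of $\infw{x}$ and only afterwards switches to the target slope $\beta$; then the short factors are literally factors of $\infw{x}$ and the corridor condition is automatic for them. One then has to argue that the resulting word still lies in a minimal subshift of the right slope — for the ultimately periodic version the tail subshift $\soc{S^\omega}$ is what matters, and for the Sturmian version one should really just revert to the bare Sturmian word of slope $\beta$ and accept a slightly smaller admissible interval, which is still uncountable. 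Verifying that these constructions simultaneously (i) sit in $\abclsr{\infw{x}}$ via \autoref{lem:corridorLemma}, (ii) are uniformly recurrent so their shift orbit closures are minimal, and (iii) have pairwise distinct frequencies so the subshifts are distinct, is the bulk of the work, but each point is routine given the tools already assembled.
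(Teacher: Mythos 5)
Your proposal is correct and is essentially the paper's proof: for each slope $\beta$ between $\inffreq[\infw{x}]{1}$ and $\supfreq[\infw{x}]{1}$ take a Sturmian word $\infw{s}$ of slope $\beta$ and apply the \hyperref[lem:corridorLemma]{Corridor Lemma}; distinct slopes give disjoint minimal subshifts. The ``main obstacle'' you describe is illusory, though: by Fekete's lemma the bounds $\max_{v\in\lang[n]{\infw{x}}}|v|_1\geq \supfreq[\infw{x}]{1}\, n$ and $\min_{v\in\lang[n]{\infw{x}}}|v|_1\leq \inffreq[\infw{x}]{1}\, n$ hold exactly for \emph{every} $n$ (the upper frequency is the infimum of $\max/n$ over $n$, not merely a limit), and since $\max_{v\in\lang[n]{\infw{x}}}|v|_1$ is an integer that is at least $\beta n$, it is automatically at least $\lceil \beta n\rceil$, the largest weight of a length-$n$ factor of $\infw{s}$, with the symmetric argument handling the minimum. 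So no small-$n$ case analysis, interval shrinking, or prefix-prepending is needed --- which is just as well, since prepending a prefix of $\infw{x}$ to a Sturmian word would in general destroy uniform recurrence and hence the minimality of the resulting shift orbit closure.
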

\begin{proof}
Let $\alpha = \supfreq[\infw{x}]{1} > \inffreq[\infw{x}]{1} =  \alpha'$.
Then, for any Sturmian word $\infw s$ of slope $\beta$, where $\alpha'\leq \beta\leq
\alpha$, $\soc{\infw s}$ is contained in $\abclsr{\infw{x}}$
by the \hyperref[lem:corridorLemma]{Corridor Lemma}. There are
uncountably many such $\infw s$.
\end{proof}

We then turn to uniformly recurrent binary words having rational uniform
letter frequencies. Our aim is to prove the following proposition:

\begin{proposition}\label{prop:rationalFreqUncountable}
Let $\infw{x}\in\{0,1\}^{\N}$ be uniformly recurrent and aperiodic with
rational uniform letter frequencies. Then $\abclsr{\infw{x}}$
contains uncountably many minimal subshifts.
\end{proposition}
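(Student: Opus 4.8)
The plan is to exploit the fact that a uniformly recurrent aperiodic binary word $\infw x$ with rational letter frequency is necessarily \emph{unbalanced} (since balanced aperiodic words are Sturmian and hence have irrational frequency), and to manufacture uncountably many distinct minimal subshifts inside $\abclsr{\infw x}$ by interpolating between suitable periodic Sturmian words. Write $\freq[\infw{x}]{1} = p/q$ in lowest terms. The key quantitative handle comes from the \hyperref[lem:corridorLemma]{Corridor Lemma} together with the formulas $\max_{v\in\lang[n]{\infw x}}|v|_1 \ge \supfreq[\infw x]{1}\,n = (p/q)n$ and $\min_{v\in\lang[n]{\infw x}}|v|_1 \le (p/q)n$. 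Because $\infw x$ is aperiodic and uniformly recurrent with uniform frequency $p/q$, its abelian complexity $\abcompl[\infw x]{n}$ is at least $2$ for infinitely many $n$ (indeed, a word with $\abcompl{n}=1$ for all large $n$ is ultimately periodic), and in fact one can show that the "corridor" $[\min_{v\in\lang[n]{}}|v|_1,\ \max_{v\in\lang[n]{}}|v|_1]$ around the line $y = (p/q)n$ widens without bound: if it stayed within a window of bounded width, $\infw x$ would be $C$-balanced, hence Sturmian, hence of irrational frequency — a contradiction. So there is an unbounded set of lengths $n$ at which the corridor contains $(p/q)n$ strictly in its interior with room on both sides.

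Next I would recall that the periodic Sturmian word $S^{\omega}$ associated to the rational $p/q$ (coming from the standard word $S$ of slope $p/q$, of length $q$ with weight $p$) is balanced, and that the words $0S^{\omega}$ and $1S^{\omega}$ are balanced as well (as noted in the excerpt). More generally, for any finite $0/1$ sequence $(\eps_i)$ one can consider words built by concatenating cyclic shifts of $S$ — all such words stay within the corridor of $S^{\omega}$, i.e.\ their length-$n$ weights lie in an interval of width at most $1$ about $(p/q)n$. The idea is then: since the corridor of $\infw x$ strictly contains the corridor of $S^{\omega}$ for every length $n$ (after the first length where it becomes genuinely wider, and by continuity, \autoref{lem:continuity}, everywhere), every such "shift-of-$S$" word $\infw y$ satisfies the hypotheses of the \hyperref[lem:corridorLemma]{Corridor Lemma}, so $\infw y\in\abclsr{\infw x}$. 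By choosing, along an increasing sequence of block lengths growing fast enough, whether to insert a cyclic-shift defect or not, one builds uncountably many words $\infw y$; passing to $\soc{\infw y}$ (taking a uniformly recurrent subword if necessary, which stays in the closed set $\abclsr{\infw x}$) yields uncountably many \emph{minimal} subshifts, and they are pairwise distinct because the defect positions are recorded by the factors that appear. Two such minimal subshifts either coincide or are disjoint, so genuinely distinct words give genuinely distinct minimal subshifts, and uncountability of the construction survives.

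The main obstacle is the very last point: ensuring the uncountably many constructed words really give \emph{uncountably many distinct} minimal subshifts rather than collapsing to a common one, and ensuring each constructed word (which a priori is only an "inserted-defect" word, not obviously uniformly recurrent) actually contains a uniformly recurrent word generating a minimal subshift inside $\abclsr{\infw x}$. The first half is handled by encoding a real parameter into the sparse pattern of defects so that distinct parameters force distinct (long) factors, hence disjoint minimal subshifts; the second half is handled by the standard compactness fact that any subshift contains a minimal one, together with the observation that $\abclsr{\infw x}$ is a subshift (closed and shift-invariant) and is closed under passing to subshifts. A secondary technical point is verifying that the "shift-of-$S$" blocks really do keep all factor weights within the $S^{\omega}$-corridor; this follows because any factor of a concatenation of cyclic rotations of a balanced periodic word of slope $p/q$ is itself balanced of the same slope, a routine consequence of the balance of $S^{\omega}$ and of the equality $\lang{xy \text{ and } yx}$ type relations for standard pairs, so I would only sketch it.
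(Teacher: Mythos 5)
Your overall strategy (stay close to the periodic Sturmian word of slope $p/q$ and invoke the \hyperref[lem:corridorLemma]{Corridor Lemma}) is the right one, but three steps in your write-up are genuinely broken. The central gap is your justification of the ``room on both sides'' claim: you argue that if the corridor around $y=(p/q)n$ stayed within a bounded window, then $\infw{x}$ would be $C$-balanced, hence Sturmian, hence of irrational frequency. But $C$-balancedness does \emph{not} imply Sturmianity --- only $1$-balancedness does. The Thue--Morse word is uniformly recurrent, aperiodic, $2$-balanced, and has frequency $1/2$, so its corridor does \emph{not} widen without bound; your ``contradiction'' never arrives. What is actually needed, and what the paper isolates as \autoref{lem:AperBinRatFreqInfSup}, is the strict inequality $\max_{v\in\lang[n]{\infw{x}}}|v|_1 > np/q$ and $\min_{v\in\lang[n]{\infw{x}}}|v|_1 < np/q$ for \emph{every} $n$; at multiples of $q$ this gives the integral values $np/q\pm 1$ as attained weights. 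Proving this requires a genuine averaging argument combining uniform recurrence with the frequency hypothesis (a priori the maximum weight could equal $np/q$ exactly at every multiple of $q$), and neither your Fekete-type bounds $\max\ge np/q$, $\min\le np/q$ nor your abelian-complexity remark supplies it.

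Two further steps fail as stated. First, concatenations of \emph{arbitrary} cyclic shifts of the standard word $S$ of slope $p/q$ do not keep all length-$n$ weights within $np/q\pm1$: for $S=01010$ (slope $2/5$) the product $10100\cdot 00101$ of two conjugates contains the factor $0000$, whose weight $0$ lies below $4\cdot\tfrac{2}{5}-1$. The paper avoids this by using only the two conjugates $w01$ and $w10$ coming from the standard pair, i.e.\ the morphism $0\mapsto w01$, $1\mapsto w10$ applied to the \emph{full} binary shift, and the $\pm1$ bound is then verified by a nontrivial case analysis (\autoref{lem:rational}), not by a ``routine consequence of balance.'' Second, your uncountability mechanism does not survive the passage to minimal subshifts: if the defect positions along your fast-growing block lengths have unbounded gaps, then every minimal subshift contained in $\soc{\infw{y}}$ consists of words in which the defects have disappeared, and all your parameters may collapse onto the single subshift $\soc{S^{\omega}}$. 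To get uncountably many \emph{minimal} subshifts the defect pattern itself must occur syndetically, i.e.\ you should substitute the pair $(w01,w10)$ into uncountably many distinct minimal subshifts of $\{0,1\}^{\N}$ --- which is precisely what the paper's proof does in one stroke by taking the image of the whole full shift.
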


For the remainder of the section we fix the word $\infw{x}$ to be uniformly recurrent and aperiodic with $\freq{\infw{x}}(1) = p/q$ and we assume $\gcd(p,q) = 1$.
We begin with a few technical lemmas:
\begin{lemma}\label{lem:AperBinRatFreqInfSup}
For all $n \in \N$ we have
$\max_{v\in \lang[n]{\infw{x}}}|v|_1 > n\frac{p}{q}$
and $\min_{v\in \lang[n]{\infw{x}}}|v|_1 < n\frac{p}{q}$.
\end{lemma}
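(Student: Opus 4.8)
The statement is a strict-inequality refinement of the general facts $\max_{v\in\lang[n]{\infw x}}|v|_1\geq \supfreq[\infw x]{1}\,n = np/q$ and $\min_{v\in\lang[n]{\infw x}}|v|_1\leq\inffreq[\infw x]{1}\,n = np/q$ recorded just before the lemma. So the content to prove is exactly that these inequalities are never tight, for any $n$. I would treat the two assertions symmetrically (replacing $\infw x$ by its complement swaps $\max\leftrightarrow\min$, $p\leftrightarrow q-p$), so it suffices to show $\max_{v\in\lang[n]{\infw x}}|v|_1 > np/q$ for all $n$. Equivalently, assuming for contradiction that $\max_{v\in\lang[m]{\infw x}}|v|_1 \leq mp/q$ for some $m$, I want to derive that $\infw x$ is purely periodic, contradicting aperiodicity.

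\textbf{Key steps.} First, since $\max_{v\in\lang[m]{\infw x}}|v|_1$ is an integer and $\supfreq[\infw x]{1}=p/q$, the subadditive/Fekete facts give $\max_{v\in\lang[m]{\infw x}}|v|_1\geq \lceil mp/q\rceil$; combined with the contradiction hypothesis, $m$ must be a multiple of $q$, say $m = kq$, and $\max_{v\in\lang[m]{\infw x}}|v|_1 = kp = mp/q$. Second, by the infimum formula $\supfreq[\infw x]{1}=\inf_n \max_{v\in\lang[n]{\infw x}}|v|_1 / n$, having equality at $n=m$ forces equality at every multiple of $m$: $\max_{v\in\lang[jm]{\infw x}}|v|_1 = jmp/q$ for all $j\geq 1$ (the ratio is already at the infimum and cannot dip below it, while subadditivity caps it). Third — and this is the crux — I claim this rigidity propagates: for every $n$ divisible by $q$ one gets $\max_{v\in\lang[n]{\infw x}}|v|_1 = np/q$. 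This follows because any $n$ divisible by $q$ divides some common multiple $jm$, and a length-$n$ factor of a length-$jm$ factor attaining weight $jmp/q$ together with subadditivity pins the weight of $\lang[n]{\infw x}$; more carefully, one tiles a long factor of length $jm$ by blocks of length $n$ and uses that the total weight $jmp/q = (jm/n)(np/q)$ leaves no room for any block to exceed $np/q$, so every length-$n$ factor occurring inside has weight exactly $np/q$, and since $\infw x$ is recurrent every length-$n$ factor occurs inside such a block. Hence $\infw x$ is "rigid" at scale $q$: all length-$q$ factors have weight exactly $p$. By the same argument applied to the complement, all length-$q$ factors have weight exactly $p$ from both sides, which is automatic, but the real payoff is the next step.

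\textbf{From rigidity to periodicity.} Once every length-$q$ factor of $\infw x$ has weight exactly $p$, a standard sliding-window argument shows $\infw x$ is purely periodic with period $q$: comparing the windows starting at positions $i$ and $i+1$, both of weight $p$, forces $x_i = x_{i+q}$ for every $i$. This contradicts aperiodicity, completing the proof of the first inequality; the second follows by passing to the complementary word, whose weight function swaps $\max$ and $\min$ and replaces $p/q$ by $(q-p)/q$, and strictness of $\min_{v}|v|_1 < np/q$ for $\infw x$ is exactly strictness of $\max_v |v|_0 > n(q-p)/q$ for the complement. I expect the main obstacle to be the bookkeeping in the third step — propagating equality from multiples of $m$ down to all multiples of $q$ and then to the "every occurring length-$q$ factor has weight exactly $p$" statement — which needs the recurrence hypothesis and a careful tiling/counting argument rather than anything deep; everything else is Fekete-type reasoning already set up in the preliminaries. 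An alternative to the tiling argument is to invoke directly that a uniformly recurrent word all of whose length-$n$ factors are abelian equivalent, for infinitely many $n$, must be periodic (a consequence of $\abcompl[\infw x]{n}=1$ implying periodicity, cited in the excerpt via \cite{DBLP:journals/mst/CovenH73}); I would likely present the tiling version as it is self-contained, but mention this shortcut.
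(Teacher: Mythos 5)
Your reduction of the $\min$ statement to the $\max$ statement by complementation, the observation that a hypothetical equality $\max_{v\in\lang[m]{\infw x}}|v|_1 = mp/q$ forces $q\mid m$, the propagation of that equality to all multiples of $m$ (subadditivity combined with the Fekete lower bound), and the final sliding-window step (all length-$q$ factors of weight exactly $p$ implies period $q$) are all correct. The gap is exactly at the step you yourself flag as the crux. Knowing that a factor $U$ of length $jm$ attains weight $jmp/q$ does \emph{not} pin each length-$n$ block of a tiling of $U$ to weight $np/q$: a block of weight $np/q+1$ can be compensated by a block of weight $np/q-1$ elsewhere in $U$, and nothing you have established excludes this --- the inequality $\max_{v\in\lang[n]{\infw x}}|v|_1\geq np/q$ constrains the heaviest length-$n$ factor of the whole word, not every individual block of $U$. (Compare $(0011)^{\omega}$: the maximum weight at length $4$ is $2$, yet the length-$2$ blocks have weights $0$ and $2$; the only reason no aperiodic uniformly recurrent counterexample to your step exists is the lemma itself, so the step cannot be taken for granted.) The shortcut you mention (abelian complexity $1$ at some length implies periodicity) suffers from the same problem: your contradiction hypothesis is a one-sided bound on the maximum and does not by itself force the minimum at that length to coincide with it.

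The paper closes exactly this hole with a density/averaging argument rather than a tiling argument. Under the hypothesis $\max_{v\in\lang[n]{\infw x}}|v|_1 = np/q$, \emph{every} sliding window of length $n$ has weight at most $np/q$; aperiodicity (via the sliding-window periodicity argument) yields some length-$n$ factor of weight at most $np/q-1$, and uniform recurrence makes it occur in every window of length $N$ for some $N$. Expressing the weight of a long prefix as $\tfrac1n$ times the sum of the weights of its length-$n$ sliding windows (plus a bounded boundary term), at least a fixed positive proportion $1/N$ of the windows fall short by $1$, which gives $\freq[\infw x]{1}\leq p/q - \tfrac{1}{nN} < p/q$, a contradiction. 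If you want to keep your structure, replace the tiling step by this estimate applied directly at length $m$; there is then no need to descend to scale $q$ at all.
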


\begin{proof}
We show the claim for $\max_{v\in \lang[n]{\infw{x}}}|v|_1$. The proof for $\min_{v\in \lang[n]{\infw{x}}}|v|_1 < n\frac{p}{q}$ is symmetric. If $n p/q$ is not an integer, then the claim follows from the fact that
$\max_{v \in \lang[n]{\infw{x}}}|v|_1 \geq n p/q$. For the sake of contradiction,
assume that $\max_{v\in \lang[n]{\infw{x}}}|v|_1 = n\frac{p}{q}$ for some multiple $n$ of $q$.
Write $\infw{x} = a_1 a_2\cdots$. For any $M \geq n$ we
may write
\begin{equation*}
|\infw{x}_{[1,M+n)}|_1 = \frac{1}{n} \sum_{i=1}^{M}|\infw
x_{[i,i+n)}|_1
           + \frac{1}{n}\sum_{i=1}^{n-1}(n-i)(|a_i|_1+|a_{M+n-i}|_1).
\end{equation*}
Indeed, in the first sum each $|a_i|_1$ is counted $n$ times for
$i=n,\ldots,M$. For $i\in
\{1,\ldots,n-1\}$ the values $|a_i|_1$ and $|a_{M+n-i}|_1$ are
counted $i$ times each. Hence, the first sum equals
$|\infw{x}_{[n,M]}|_1 + \frac{1}{n}\sum_{i=1}^{n-1} i(|a_i|_1 + |a_{M + n - i}|_1)$. The
second sum adds the missing contributions so that the total contribution of each letter
is counted once after normalizing by $\frac{1}{n}$. Observe that
the second sum is bounded from above by $n-1$ (after dividing by $\frac{1}{n}$).

As $\infw{x}$ is uniformly recurrent, there exists $N \in \N$ such that each
factor of length $N$ contains a factor of length $n$ having at most
$n\frac{p}{q} - 1$ occurrences of $1$.
Recall that each factor of length $n$ has weight at most
$np/q$ by assumption. Hence for all 
$M\geq 1$
\begin{equation*}
\frac{1}{n}\sum_{i=1}^{MN}|\infw{x}_{[i,i+n)}|_1\leq
M(N\tfrac{p}{q}-\tfrac{1}{n})
\end{equation*}
since at least $M$ of the factors of length $n$ of $\infw{x}_{[1,MN+n)}$ have at most
$n\frac{p}{q}-1$ occurrences of the letter $1$. But now
\begin{align*}
\lim_{M\to\infty}\frac{1}{MN+n}|\infw{x}_{[1,MN+n)}|_1 & \leq \lim_{M\to\infty}\frac{1}{MN+n}\sum_{i=1}^{MN}\frac{1}{n}|\infw{x}_{[i,i+n)}|_1 + \frac{n-1}{MN + n}\\
                                                   &\leq \lim_{M\to\infty}\frac{1}{MN+n}(MN\frac{p}{q}-\frac{M}{n})\\
                                                   & =\frac{p}{q}-\frac{1}{nN}.
\end{align*}
This is a contradiction.
\end{proof}

As $\infw{x}$ is assumed to be aperiodic, an immediate consequence of the above is that for any multiple $n$ of $q$, the values $n\frac{p}{q}-1$ and $n\frac{p}{q}+1$ are the weights of some factors of length $n$ of $\infw{x}$.

Let now $\varphi:\{0,1\} \to \{0,1\}^*$ be defined by $0\mapsto w01$,
$1\mapsto w10$, where $w01$ (or $w10$) is the Standard word of
slope $\frac{p}{q}$ having $|w01|_1 = p$ and $|w01| = q$.

\begin{lemma}\label{lem:rational}
For all $n\in\N$ and $v\in \lang[n]{\varphi(\{0,1\}^{\N})}$ we have $||v|_1 - n p/q| \leq 1$.
Furthermore, each integral value in the interval $[n p/q -1, np/q + 1]$ is the weight of some
$v\in \lang[n]{\varphi(\{0,1\}^{\N}}$.
\end{lemma}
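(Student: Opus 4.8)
The plan is to control the weight of an arbitrary length-$n$ factor of $\varphi(\infw y)$, $\infw y\in\{0,1\}^{\N}$, by comparing it against the decomposition of $\varphi(\infw y)$ into the blocks $\varphi(0)=w01$ and $\varphi(1)=w10$. First I would record three elementary facts about these blocks. (a) Both have length $q$ and weight $p$. (b) They differ only in their last two letters, so for every $\ell$ their length-$\ell$ prefixes (and likewise their suffixes) have equal weight, \emph{except} that their length-$(q-1)$ prefixes $w0$ and $w1$ have weights $p-1$ and $p$, and their length-$1$ suffixes $1$ and $0$ have weights $1$ and $0$. (c) It is well known that the $k$-th letter of the central word $w$ equals $\lfloor(k+1)p/q\rfloor-\lfloor kp/q\rfloor$; hence the length-$\rho$ prefix of $w01$ (which for $\rho\le q-2$ coincides with the length-$\rho$ prefix of $w$) has weight $\lfloor(\rho+1)p/q\rfloor$ for $0\le\rho\le q-2$, and together with (a) this pins down the suffix weights as well, e.g.\ the length-$b$ suffix of $w01$ has weight $\lceil(b-1)p/q\rceil$ for $2\le b\le q-1$.

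Now let $v\in\lang[n]{\varphi(\infw y)}$ and let $V$ be the shortest factor of $\varphi(\infw y)$ containing $v$ that consists of whole $\varphi$-blocks; write $V=v'vv''$ where $v'$ (resp.\ $v''$) is a prefix (resp.\ suffix) of the first (resp.\ last) block of $V$, of lengths $a,b\in\{0,\dots,q-1\}$ (note that $a+n+b$ is a multiple of $q$). Since every block has weight $p$, $|V|_1=\frac pq(a+n+b)$, whence
\[
  |v|_1-\frac{np}{q}=\Bigl(\frac{ap}{q}-|v'|_1\Bigr)+\Bigl(\frac{bp}{q}-|v''|_1\Bigr).
\]
Bounding the right side from above amounts to making $|v'|_1,|v''|_1$ as small as possible over the two block types, and from below as large as possible; substituting the weights from (b) and (c) and distinguishing the cases $a=q-1$ vs.\ $a\le q-2$ and $b\in\{0,1\}$ vs.\ $2\le b\le q-1$, each resulting expression is manifestly in $[-1,1]$ (for example it is $\{(a+1)p/q\}+\{(b-1)p/q\}-1$ when $a\le q-2$ and $b\ge 2$). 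This yields $\bigl||v|_1-np/q\bigr|\le 1$, with the value $1$ reached only for $(a,b)=(q-1,1)$; in that case $a+n+b=n+q$, so $q\mid n$.

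For the second assertion: when $q\nmid n$ the only integers in $[np/q-1,\,np/q+1]$ are $\lfloor np/q\rfloor$ and $\lceil np/q\rceil$, and both occur as weights of length-$n$ factors of the periodic Sturmian word $(w01)^{\omega}=\varphi(0^{\omega})$ of slope $p/q$ (if only one occurred, all its length-$n$ windows would have equal weight, forcing period $n$ and contradicting the primitive period $q$). When $q\mid n$ one additionally needs the values $np/q\pm1$; these are produced by length-$n$ windows of $\varphi(0^{m}10^{\omega})$ (for large $m$) obtained from a block-aligned factor spanning the one altered block $w10$ by deleting a length-$(q-1)$ prefix and a length-$1$ suffix, with the altered block placed at the right end (giving $np/q-1$) or at the left end (giving $np/q+1$), exactly as in the displayed identity with $(a,b)=(q-1,1)$. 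The genuine obstacle is fact (c) together with the case analysis: plain balance of $(w01)^{\omega}$ bounds the two error terms only by $2$ in total, so one really needs the exact prefix and suffix weights of $w01$ and $w10$, and obtaining these draws on the finer theory of standard words; the rest is routine bookkeeping.
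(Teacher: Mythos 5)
Your proof is correct, but it follows a genuinely different route from the paper's. The paper writes $u=r\varphi(x)s$ with $r$ a block suffix and $s$ a block prefix, replaces the middle $\varphi(x)$ by an abelian-equivalent power of a single block, and thereby reduces everything to the known balancedness of the words $(wcd)^{\omega}$ and $b(wba)^{\omega}$; the only case analysis needed is on the short suffix $r$. You instead complete $v$ to a block-aligned factor $V=v'vv''$ and compute the two boundary defects \emph{exactly}, using the mechanical-word formula for the letters of the central word (your fact (c), which is indeed classical: $0w1$ is the length-$q$ prefix of the lower mechanical word of slope $p/q$). Your approach buys an explicit formula for $|v|_1-np/q$ in each case and pinpoints that deviation $\pm1$ forces $(a,b)=(q-1,1)$, hence $q\mid n$ — information the paper's proof also extracts but less transparently; the paper's approach buys brevity by outsourcing the combinatorics to the balance property of $aS^{\omega}$. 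Your argument for attainment (periods of $(w01)^{\omega}$ when $q\nmid n$; explicitly placed $w10$ blocks in $\varphi(0^m10^{\omega})$ when $q\mid n$) is fine and in fact more detailed than the paper's one-line ``clearly each value is attained.'' Two trivial points: you have the labels swapped in the last step — with $V$ losing its length-$(q-1)$ prefix and length-$1$ suffix, the altered block $w10$ at the \emph{right} end contributes the heavy prefix $w1$ and yields $np/q+1$, while $w10$ at the \emph{left} end contributes the last letter $0$ and yields $np/q-1$ — and you should note (trivially) that $np/q$ itself is attained by $(w01)^{n/q}$ when $q\mid n$. Neither affects correctness.
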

\begin{proof}
Let $n\in \N$ and $u\in \lang[n]{\varphi(\{0,1\}^{\N}}$. Then there exist letters
$a,b,c,d$ with $\{a,b\} = \{c,d\} = \{0,1\}$ such that
$u = r\varphi(x)s$
for some $x\in \{0,1\}^*$, $r\in \suff(wab)$, and $s\in \pref(wcd)$ satisfying $|r|,|s|< q$.
Observe now that $u\sim_{\text{ab}} v$ for some $v\in r(wcd)^*s$ and that $(wcd)^{\omega}$ is periodic Sturmian.

If $|r|\geq 2$, $r=\eps$, or $r=b=d$, then $v\sim u'$ for some
$u'\in \lang{(wab)^{\omega}}$. It follows that
$|u|_1 \in \{\lfloor n\frac{p}{q}\rfloor,\lceil n\frac{p}{q}\rceil\}$. Assume that
$r = b \neq d$. Now $u\sim v = \pref_n(b(wba)^{\omega})$, where
$b(wba)^{\omega}$ is balanced.
If $n$ is not a multiple of $q$, then $|u|_1 = \lfloor
n\frac{p}{q}\rfloor +|b|_1$. If $n$ is a multiple of $q$, then
$|u|_1 = n\frac{p}{q} + |b|_1-|a|_1$.

We have shown that $| |u|_1 - np/q| \leq 1$ regardless of whether $n$ is a multiple of
$q$ or not. Clearly each
value is attained by some word in $\mathcal
L_n(\varphi(\{0,1\}^{\N}))$. This concludes the proof.
\end{proof}

The above lemmas allow us to conclude \autoref{prop:rationalFreqUncountable}.
\begin{proof}[Proof of \autoref{prop:rationalFreqUncountable}]
Assume $\freq[\infw{x}]{1} = \frac{p}{q}$. Let $\varphi(\{0,1\}^{\N}) = \mathcal O$ be as in the above lemma
whence, for all $v\in \lang[n]{\mathcal O}$,
$||v|_1 - np/q| \leq 1$. By
\autoref{lem:AperBinRatFreqInfSup}, $\max_{u\in \lang[n]{\infw{x}}}|u|_1
> n\frac{p}{q}$ and $\min_{u\in \lang[|v|]{\infw{x}}}|u|_1
< n\frac{p}{q}$. By the \hyperref[lem:corridorLemma]{Corridor Lemma}
we have that, for any word $\infw y\in \mathcal O$,
we have $\soc{\infw y} \in \abclsr{\infw{x}}$. Clearly
$\mathcal O$ contains uncountably many minimal
subshifts.
\end{proof}

\section{Abelian closures of \texorpdfstring{$C$}{C}-balanced words}\label{sec:C-bal}

In this section we prove the following statement:

\begin{proposition}\label{Prop:C-bal}
Let $\infw{x}$ be a uniformly recurrent binary word which is not Sturmian. Suppose in addition
that $\infw{x}$ is $C$-balanced for some $C>1$ and that $\freq[\infw{x}]{1} = \alpha$ is
irrational. Then $\abclsr{\infw{x}}$ contains infinitely many minimal subshifts.
\end{proposition}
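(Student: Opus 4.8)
The plan is to use the \hyperref[lem:corridorLemma]{Corridor Lemma} to turn the statement into a corridor condition, then to build the required minimal subshifts by ``squeezing'' a Sturmian word of slope $\alpha$. Set $\ell_n=\min\{|v|_1:v\in\lang[n]{\infw{x}}\}$ and $u_n=\max\{|v|_1:v\in\lang[n]{\infw{x}}\}$; by the \hyperref[lem:corridorLemma]{Corridor Lemma}, $\infw{y}\in\abclsr{\infw{x}}$ exactly when $\ell_n\le\min\{|v|_1:v\in\lang[n]{\infw{y}}\}$ and $\max\{|v|_1:v\in\lang[n]{\infw{y}}\}\le u_n$ for every $n$. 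Since $\soc{\infw{y}}$ is a minimal subshift whenever $\infw{y}$ is uniformly recurrent, and two minimal subshifts are equal or disjoint, it is enough to exhibit infinitely many uniformly recurrent words obeying these bounds and having pairwise distinct languages. Throughout, $\infw{s}$ denotes the characteristic Sturmian word of slope $\alpha$; recall that every length-$n$ factor of $\soc{\infw{s}}$ has weight $\lfloor n\alpha\rfloor$ or $\lceil n\alpha\rceil$, both occur, and the Fekete bounds give $\ell_n\le\lfloor n\alpha\rfloor$ and $u_n\ge\lceil n\alpha\rceil$ for all $n$.

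The first step is a \emph{room lemma}. Since $\infw{x}$ is not Sturmian it is not balanced, so (using \autoref{lem:continuity} and the Fekete bounds) one first gets $\abcompl[\infw{x}]{n}\ge 3$ for infinitely many $n$; I would then upgrade this to: $u_n\ge\lceil n\alpha\rceil+1$ \emph{and} $\ell_n\le\lfloor n\alpha\rfloor-1$ for all sufficiently large $n$, i.e. eventually the corridor of $\infw{x}$ strictly contains the Sturmian corridor on both sides. (Should only a one-sided bound be available directly, the symmetry $0\leftrightarrow 1$, which sends $\alpha$ to $1-\alpha$ and swaps $\ell_n$ with $u_n$, normalizes which side one works on.)

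For the construction, to each irrational $\gamma\in(0,1)$ rationally independent of $\alpha$ I would attach a word $\infw{y}_\gamma$ obtained from $\infw{s}$ as follows. Using the orbit of the rotation by $\gamma$ one selects a set $P$ of positions with bounded gaps (no two closer than a fixed length), and at each position of $P$ one inserts a bounded local ``patch'': a bounded replacement whose flip-contributions sum to $0$ and which changes the weight of every sufficiently long overlapping window by at most $1$. One arranges that $\infw{y}_\gamma$ is uniformly recurrent (being a sliding-block-code image of a coding of the rotation by $(\alpha,\gamma)$ on $(\R/\Z)^2$), has frequency $\alpha$ of $1$, and has every length-$n$ window of weight in $[\lfloor n\alpha\rfloor-1,\lceil n\alpha\rceil+1]$; by the room lemma and the \hyperref[lem:corridorLemma]{Corridor Lemma} this gives $\soc{\infw{y}_\gamma}\subseteq\abclsr{\infw{x}}$. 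Finally one makes the patches ``visible'' -- each introduces a factor not in $\lang{\infw{s}}$ -- so that from any point of $\soc{\infw{y}_\gamma}$ one recovers the $\gamma$-parametrized pattern of patch positions, hence $\gamma$; thus distinct $\gamma$ give distinct minimal subshifts, and an infinite rationally independent family of $\gamma$'s completes the proof (in fact yielding uncountably many).

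The main obstacle is the room lemma together with its interface to the construction. A uniformly recurrent word in $\abclsr{\infw{x}}$ has frequency exactly $\alpha$ (\autoref{lem:uniformFrequencies}), so a non-Sturmian such word must have windows above the Sturmian ceiling \emph{and}, by compensation, windows below the Sturmian floor; and since a net-zero patch inevitably produces heavy and light windows at \emph{all} large scales, both phenomena must be permitted by the corridor of $\infw{x}$ at all large scales. As $\infw{x}$ is only assumed $C$-balanced, controlling the shape of its corridor this precisely -- essentially, ruling out that a $C$-balanced uniformly recurrent binary word of irrational frequency is ``Sturmian-tight on one side at all large scales'' unless it is Sturmian -- is the geometric heart of the argument and is where the bulk of the work lies.
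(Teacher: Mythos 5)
There is a genuine gap, and it sits exactly where you place ``the bulk of the work'': your \emph{room lemma} is false in general, and the rest of the construction depends on it. Write $g_{\infw{x}}(i)=|a_1\cdots a_i|_1=\alpha i+h(i)$; $C$-balancedness means $h$ takes values in an interval of some finite width $w=C_2-C_1$, and non-Sturmianness only forces $w>1$. For a factor of length $n$ one has $\max_i\bigl(g_{\infw{x}}(i+n)-g_{\infw{x}}(i)\bigr)\le n\alpha+w$, while $\lceil n\alpha\rceil+1=n\alpha+2-\{n\alpha\}$ (with $\{\cdot\}$ the fractional part, $\alpha$ irrational). Hence a factor of weight $\lceil n\alpha\rceil+1$ can exist only when $\{n\alpha\}\ge 2-w$. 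If $w<2$ — and there are uniformly recurrent, non-Sturmian, $C$-balanced words of irrational frequency with any width in $(1,2)$ — this fails for a positive-density set of $n$, so $u_n=\lceil n\alpha\rceil$ infinitely often and the corridor is Sturmian-tight above (and, symmetrically, below) at infinitely many scales. Your patching construction cannot survive this: as you yourself note, a net-zero patch forces windows of weight $\lceil n\alpha\rceil+1$ and $\lfloor n\alpha\rfloor-1$ at all large scales, and the \hyperref[lem:corridorLemma]{Corridor Lemma} then excludes $\infw{y}_\gamma$ from $\abclsr{\infw{x}}$. This is not an oversight you could patch locally: the paper proves exactly your statement as \autoref{prop:wideStripUncountablyMany} \emph{under the additional hypothesis} that both extra weights occur for all large $n$, and explicitly remarks that this does not yield \autoref{Prop:C-bal} in full generality; the open problems section singles out the words violating your room lemma as precisely the unresolved case for ``uncountably many''.

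The paper's actual argument avoids the issue by never leaving the corridor of $\infw{x}$: instead of perturbing a Sturmian word outward (which needs room on both sides of the width-one Sturmian strip), it perturbs $\infw{x}$ itself inward. The upper $C$-squeezing operation $s^+_{C_2-\varepsilon}$ flips occurrences of $10$ to $01$ wherever the graph exceeds $y=\alpha x+C_2-\varepsilon$, producing a word whose graph lies in a strictly narrower strip; \autoref{lem:uppersqueezing} shows the result stays in $\abclsr{\infw{x}}$, and a width argument (factors of width close to $C_2-C_1$ occur with bounded gaps in $\infw{x}$ but cannot occur in the squeezed word) separates the resulting minimal subshifts. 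Iterating with a carefully chosen decreasing sequence of $\varepsilon$'s gives infinitely many — though, tellingly, not uncountably many — minimal subshifts. If you want to salvage your approach, you would have to restrict to the regime where your room lemma holds, in which case you essentially reprove \autoref{prop:wideStripUncountablyMany}, or find a genuinely new idea for the tight-corridor words.
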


We use the following notion of the graph $g_{\infw{w}}$ of an infinite word $\infw{w}$, which is a modification
of a geometric approach from \cite{AP16}. We focus on binary words, although the notion extends
in an obvious way to nonbinary alphabets. Let $\infw{w}=a_1 a_2 \cdots$ be an infinite word over
a finite alphabet $\alphabet$. We translate $\infw{w}$ to a graph visiting points of the
infinite rectangular grid by interpreting letters of $w$ as drawing instructions. In the binary
case, we associate the letter $0$ with a move by vector $\vec{v}_0=(1,0)$, and the letter $1$ with a move
$\vec{v}_1=(1,1)$. We start at the origin $(x_0,y_0)=(0,0)$. At step $n$, we are at a point
$(x_{n-1}, y_{n-1})$ and we move by a vector corresponding to the letter $a_{n}$, so that we
come to a point $(x_{n}, y_{n})=(x_{n-1}, y_{n-1}) + \vec{v}_{a_n}$, and the two points
$(x_{n-1}, y_{n-1})$ and $(x_{n}, y_{n})$ are connected with a line segment. So, we translate
the word $\infw{w}$ to a path in $\mathbb{Z}^2$. We denote the corresponding graph by
$g_{\infw{w}}$. So, for any word $\infw{w}$, its graph is a piecewise linear function with
linear segments connecting integer points (see \autoref{fig:example_TM}). We remark that
$g_{\infw{w}}(i)=|a_1\cdots a_{i}|_1$. Note also that instead of the vectors $(0,1)$ and
$(1,1)$, one can use any other pair of noncollinear vectors $\vec{v}_0$ and $\vec{v}_1$. For a
$k$-letter alphabet one can consider a similar graph in $\mathbb{Z}^k$. Note that the graph
can also be defined for finite words in a similar way, and we will sometimes use it.

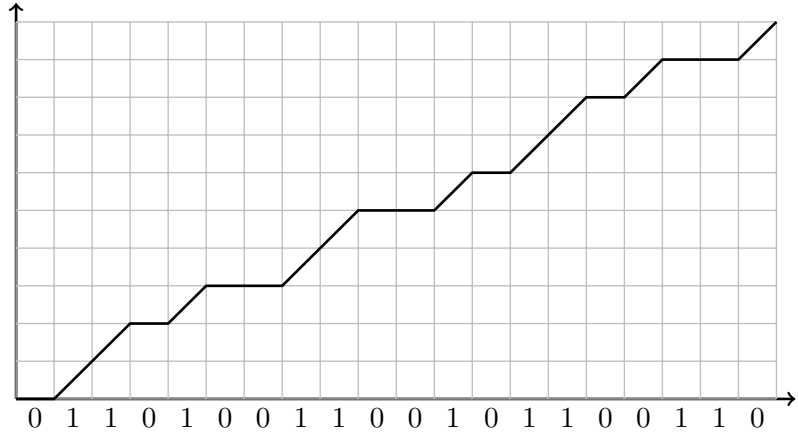
\begin{figure}
\centering

\begin{tikzpicture}[scale=0.5]
\tikzstyle{every node}=[shape=rectangle,fill=none,draw=none,minimum size=0cm,inner sep=2pt]
    \tikzstyle{every path}=[draw=black,line width = 1pt]

    \draw[->] (0,0)  to   (20.5,0);
    \draw[->] (0,0)  to   (0,10.5);

    \tikzstyle{every path}=[draw=black,line width = 0.5pt]    
\draw [gray!50] (0,0) grid (20,10);
    
    \tikzstyle{every path}=[draw=black,line width = 1pt]      
\draw (0,0)  --  (1,0) -- (2,1) -- (3,2) -- (4,2) -- (5,3) -- (6,3) -- (7,3) -- (8,4) -- (9,5) -- (10,5) -- (11,5) -- (12,6) -- (13,6) -- (14,7) -- (15,8) -- (16,8) -- (17,9) -- (18,9) -- (19,9) -- (20,10);

 \node[fill=white] at (0.5,-0.5){$0$};
  \node[fill=white] at (1.5,-0.5){$1$};
   \node[fill=white] at (2.5,-0.5){$1$};
    \node[fill=white] at (3.5,-0.5){$0$};

 \node[fill=white] at (4.5,-0.5){$1$};
  \node[fill=white] at (5.5,-0.5){$0$};
   \node[fill=white] at (6.5,-0.5){$0$};
    \node[fill=white] at (7.5,-0.5){$1$};
    
 \node[fill=white] at (8.5,-0.5){$1$};
  \node[fill=white] at (9.5,-0.5){$0$};
   \node[fill=white] at (10.5,-0.5){$0$};
    \node[fill=white] at (11.5,-0.5){$1$};
    
 \node[fill=white] at (12.5,-0.5){$0$};
  \node[fill=white] at (13.5,-0.5){$1$};
   \node[fill=white] at (14.5,-0.5){$1$};
    \node[fill=white] at (15.5,-0.5){$0$};
    
 \node[fill=white] at (16.5,-0.5){$0$};
  \node[fill=white] at (17.5,-0.5){$1$};
   \node[fill=white] at (18.5,-0.5){$1$};
    \node[fill=white] at (19.5,-0.5){$0$};
    
\end{tikzpicture}
\caption{Graph of the Thue--Morse word.}\label{fig:example_TM}
\end{figure}

To prove the proposition, we will need the following operation of (upper) $C$-squeezing.

\begin{definition}
Let $\infw{x} = a_1a_2\cdots$ be a binary word with $\freq[\infw{x}]{1}  = \alpha$, and let $C\in\mathbb{R}$. We define an operation of \emph{$C$-squeezing} of $\infw{x}$,
$s_C^{+}(\infw{x}) = a_1' a_2' \cdots$, as follows. For each $i$ such that
$g_{\infw{x}}(i) > \alpha i + C$ and $a_{i-1}=1$, $a_i=0$, we define $a'_{i-1}=0$, $a'_i=1$.  In this case we say that we have a switch at position $i$.
Otherwise we define $a'_i=a_i$.
\end{definition}

 Informally, the $C$-squeezing operation works as follows. If there is a piece of graph above
 the line $y=\alpha x + C$, we make local changes in this piece getting this part of the graph
 closer to the stripe (see \autoref{fig:uppersqueezing}). Clearly, we can symmetrically define
 an operation  of lower $C$-squeezing, but for our proof it is enough to squeeze only from one
 side. We return to this operation in \autoref{subsec:C-squeezingRevisited}.

\begin{figure} \centering
\begin{tikzpicture}[scale=0.5]
\tikzstyle{every node}=[shape=rectangle,fill=none,draw=none,minimum size=0cm,inner sep=2pt]
    \tikzstyle{every path}=[draw=black,line width = 1pt]

    \draw[->] (0,0)  to   (20.5,0);
    \draw[->] (0,0)  to   (0,10.5);

    \tikzstyle{every path}=[draw=black,line width = 0.5pt]    
\draw [gray!50] (0,0) grid (20,10);
\draw (0.5,0) -- (20,9.2);
    
    \tikzstyle{every path}=[draw=black,line width = 1pt]      
\draw (0,0)  --  (1,0) -- (2,1) -- (3,2) -- (4,2) -- (5,2) -- (6,2) -- (7,2) -- (8,3) -- (9,3) -- (10,4) -- (11,5) -- (12,6) -- (13,7) -- (14,7) -- (15,8) -- (16,8) -- (17,8) -- (18,8) -- (19,9) -- (20,9);

\draw[dotted] (2,1) -- (3,1) -- (4,2)
(12,6) -- (13,6) -- (14,7) -- (15,7) -- (16,8)
(18,8) -- (19,8) -- (20,9);

 \node[fill=white] at (0.5,-0.5){$0$};
  \node[fill=white] at (1.5,-0.5){$1$};
   \node[fill=white] at (2.5,-0.5){$1$};
    \node[fill=white] at (3.5,-0.5){$0$};

 \node[fill=white] at (4.5,-0.5){$0$};
  \node[fill=white] at (5.5,-0.5){$0$};
   \node[fill=white] at (6.5,-0.5){$0$};
    \node[fill=white] at (7.5,-0.5){$1$};
    
 \node[fill=white] at (8.5,-0.5){$0$};
  \node[fill=white] at (9.5,-0.5){$1$};
   \node[fill=white] at (10.5,-0.5){$1$};
    \node[fill=white] at (11.5,-0.5){$1$};
    
 \node[fill=white] at (12.5,-0.5){$1$};
  \node[fill=white] at (13.5,-0.5){$0$};
   \node[fill=white] at (14.5,-0.5){$1$};
    \node[fill=white] at (15.5,-0.5){$0$};
    
 \node[fill=white] at (16.5,-0.5){$0$};
  \node[fill=white] at (17.5,-0.5){$0$};
   \node[fill=white] at (18.5,-0.5){$1$};
    \node[fill=white] at (19.5,-0.5){$0$};
    
\draw (6,-2)  --  (7,-2);
\draw[densely dotted] (10,-2)  --  (11,-2);

 \node[fill=white] at (7.5,-2){$\infw{w}$};
 \node[fill=white] at (13,-2){$\infw{w}'=s^+_C(\infw{w})$};    
\end{tikzpicture}
\caption{Upper squeezing.}\label{fig:uppersqueezing}
\end{figure}
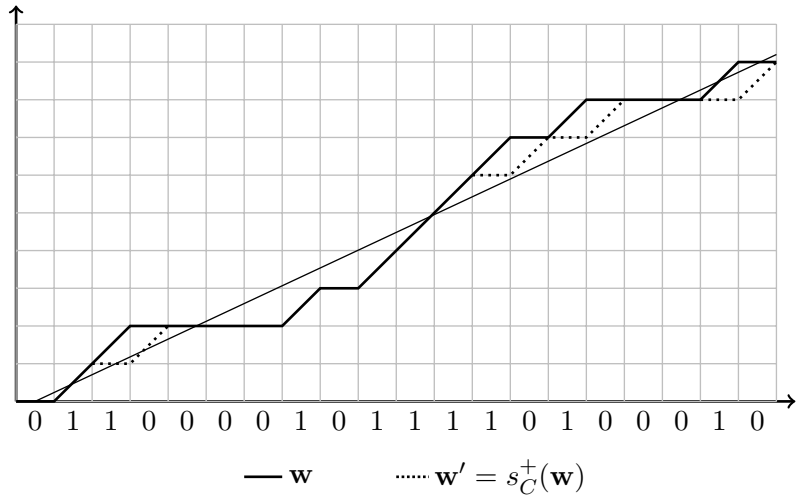

The following claim is immediate:
\begin{claim}\label{claim:SqueezeFrequency}
The operation of squeezing does not change the letter frequencies.
\end{claim}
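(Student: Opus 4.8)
The plan is to observe that $C$-squeezing merely rewrites certain pairwise disjoint occurrences of the factor $10$ as $01$, that such a local rearrangement changes the weight of any factor by at most $1$, and then to pass to the limit. For the first point, a switch at position $i$ occurs only when $a_{i-1}a_i = 10$, in which case these two letters are replaced by $01$, so the switch affects only the pair of positions $\{i-1,i\}$. These pairs are pairwise disjoint: if switches occurred at positions $i$ and $i+1$, the switch at $i+1$ would require $a_i a_{i+1} = 10$, forcing $a_i = 1$, whereas the switch at $i$ forces $a_i = 0$. Hence $s_C^{+}$ is unambiguously defined, and $\infw{w}' := s_C^{+}(\infw{x}) = a_1'a_2'\cdots$ is obtained from $\infw{x}$ by rewriting some pairwise disjoint occurrences of $10$ into $01$.

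Next I would compare the weights of factors occupying the same positions. Fix $n\geq 1$ and $j\geq 1$, and set $v = a_j'\cdots a_{j+n-1}'$ and $u = a_j\cdots a_{j+n-1}$, so that $v\in\lang[n]{\infw{w}'}$ and $u\in\lang[n]{\infw{x}}$. A switch whose pair lies entirely inside the window $[j,j+n-1]$ contributes $0$ to $|v|_1-|u|_1$, as both $10$ and $01$ contain a single $1$; a switch whose pair lies entirely outside contributes nothing; and a short case analysis shows that the only switches straddling the window are one at position $j$ (turning a $0$ of $\infw{x}$ into a $1$ of $\infw{w}'$, hence contributing $+1$) and one at position $j+n$ (contributing $-1$). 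Since at most one switch occurs at any position, $\bigl||v|_1-|u|_1\bigr|\leq 1$, and therefore, for every $n$,
\begin{equation*}
\max_{v\in\lang[n]{\infw{w}'}}|v|_1 \leq \max_{u\in\lang[n]{\infw{x}}}|u|_1 + 1
\qquad\text{and}\qquad
\min_{v\in\lang[n]{\infw{w}'}}|v|_1 \geq \min_{u\in\lang[n]{\infw{x}}}|u|_1 - 1.
\end{equation*}

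Dividing these inequalities by $n$ and letting $n\to\infty$ yields $\supfreq[\infw{w}']{1}\leq\supfreq[\infw{x}]{1}=\alpha$ and $\inffreq[\infw{w}']{1}\geq\inffreq[\infw{x}]{1}=\alpha$; since $\inffreq[\infw{w}']{1}\leq\supfreq[\infw{w}']{1}$ always holds, both quantities equal $\alpha$, so $\freq[\infw{w}']{1}=\alpha$ and hence $\freq[\infw{w}']{0}=1-\alpha$, as desired. The argument is essentially routine; the only point meriting care is the disjointness of the switch pairs, which is exactly what keeps the local rewritings from interfering and what bounds the weight change of an arbitrarily long factor by $1$.
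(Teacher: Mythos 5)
Your proof is correct; the paper states this claim without proof (``immediate''), and your argument — disjointness of the switched pairs, hence a weight change of at most $1$ for any factor, followed by dividing by $n$ and letting $n\to\infty$ — is exactly the intended verification. The only point worth noting is that you correctly use the paper's definition of frequency via $\max$/$\min$ over all length-$n$ factors rather than just prefixes, which is why the uniform bound of $1$ across all windows is the right thing to establish.
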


Note that, by \autoref{lem:continuity} and \autoref{lem:corridorLemma}, we have
\begin{claim}
Let $\infw{u}$ be a binary word with $\freq[\infw{u}]{1} = \alpha$. Then for each $m$ there exist $i$ and $j$ such that $|u_i\cdots u_{i+m-1}|_1 = \lfloor \alpha m\rfloor$ and
$|u_j\cdots u_{j+m-1}|_1=\lceil \alpha m\rceil$.
\end{claim}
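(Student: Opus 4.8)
The plan is to show that $\lfloor \alpha m\rfloor$ and $\lceil \alpha m\rceil$ both lie in the integer interval delimited by the smallest and the largest weight of a length-$m$ factor of $\infw{u}$, and then to invoke \autoref{lem:continuity} to deduce that every integer in this interval is the weight of some length-$m$ factor.

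First I would invoke the Fekete-type frequency bounds recorded in \autoref{sec:preliminaries}: for every $m\in\N$ one has $\max_{v\in\lang[m]{\infw{u}}}|v|_1 \geq \supfreq[\infw{u}]{1}\,m = \alpha m$ and $\min_{v\in\lang[m]{\infw{u}}}|v|_1 \leq \inffreq[\infw{u}]{1}\,m = \alpha m$. Since the left-hand sides are integers, these sharpen to $\max_{v\in\lang[m]{\infw{u}}}|v|_1 \geq \lceil \alpha m\rceil$ and $\min_{v\in\lang[m]{\infw{u}}}|v|_1 \leq \lfloor \alpha m\rfloor$. As $\lfloor\alpha m\rfloor\leq\lceil\alpha m\rceil$, both of these numbers are integers lying in the range $\bigl[\,\min_{v\in\lang[m]{\infw{u}}}|v|_1,\ \max_{v\in\lang[m]{\infw{u}}}|v|_1\,\bigr]$.

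Next I would apply \autoref{lem:continuity}. The minimum and maximum weights over the finite, nonempty set $\lang[m]{\infw{u}}$ are attained, so continuity of the abelian complexity forces $\{\,|v|_1 : v\in\lang[m]{\infw{u}}\,\}$ to be exactly the set of all integers between those two extremes. Hence $\lfloor\alpha m\rfloor$ and $\lceil\alpha m\rceil$ are each the weight of some length-$m$ factor of $\infw{u}$, and taking $i$ and $j$ to be starting positions of two such factors yields the claim. Equivalently, one may argue via \autoref{lem:corridorLemma}: the two displayed inequalities are precisely the conditions ensuring that a Sturmian word of slope $\alpha$ --- a periodic Sturmian word when $\alpha$ is rational --- lies in $\abclsr{\infw{u}}$, and every such word has length-$m$ factors of weights $\lfloor\alpha m\rfloor$ and $\lceil\alpha m\rceil$, which are therefore abelian factors of $\infw{u}$.

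There is no genuinely hard step here; the only points requiring care are using the frequency bounds in the correct direction, the integrality rounding that converts $\geq \alpha m$ and $\leq \alpha m$ into $\geq\lceil\alpha m\rceil$ and $\leq\lfloor\alpha m\rfloor$, and the degenerate case $\alpha m\in\Z$ (possible only for rational $\alpha$), where $\lfloor\alpha m\rfloor=\lceil\alpha m\rceil$ and the argument still goes through verbatim.
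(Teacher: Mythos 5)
Your proposal is correct and follows essentially the same route the paper intends: the Fekete-type bounds $\max_{v\in\lang[m]{\infw{u}}}|v|_1\geq\alpha m$ and $\min_{v\in\lang[m]{\infw{u}}}|v|_1\leq\alpha m$ from the preliminaries, sharpened by integrality, combined with \autoref{lem:continuity} to fill in every intermediate weight. The paper states the claim as an immediate consequence of \autoref{lem:continuity} and \autoref{lem:corridorLemma} without spelling out the details, and your write-up supplies exactly those details (including the harmless degenerate case $\alpha m\in\Z$).
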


Let us now prove the main technical lemma relating $C$-squeezings and abelian closures.
\begin{lemma}\label{lem:uppersqueezing} For a binary word $\infw{x}$ we have $s^{+}_{C}(\infw{x}) \in \abclsr{\infw{x}}$.\end{lemma}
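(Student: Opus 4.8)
The plan is to use the Corridor Lemma: it suffices to show that for every $n \in \N$, the weights of length-$n$ factors of $\infw{w}' := s_C^+(\infw{x})$ all lie in the interval $[\min\{|v|_1 : v \in \lang[n]{\infw{x}}\},\ \max\{|v|_1 : v \in \lang[n]{\infw{x}}\}]$. Geometrically, $|v|_1$ for a length-$n$ factor starting at position $i$ is exactly $g_{\infw{w}}(i+n) - g_{\infw{w}}(i)$, i.e. the vertical rise of the graph over a horizontal window of width $n$. So I want to compare the rise of $g_{\infw{w}'}$ over an arbitrary window with the maximal and minimal rises of $g_{\infw{x}}$ over windows of the same width. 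The key structural observation is that a switch at position $i$ (changing $\cdots 10 \cdots$ to $\cdots 01 \cdots$ at positions $i-1,i$) only ever moves a single grid point of the graph \emph{downward} by $1$ — the point $(i-1, g_{\infw{x}}(i-1))$ is replaced by $(i-1, g_{\infw{x}}(i-1)-1)$ — and switches at distinct positions are ``non-interfering'' because a switch requires $a_{i-1}=1, a_i=0$, which forces the affected pairs to be disjoint (two switch positions differ by at least $2$, and in fact the condition $g_{\infw{x}}(i) > \alpha i + C$ together with $a_{i-1}a_i = 10$ further separates them). Hence $g_{\infw{w}'}(j) \in \{g_{\infw{x}}(j),\ g_{\infw{x}}(j) - 1\}$ for every $j$, with $g_{\infw{w}'}(j) = g_{\infw{x}}(j)-1$ exactly when $j = i-1$ for some switch position $i$.

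Now fix a window $[j, j+n]$ in $\infw{w}'$. The rise is $g_{\infw{w}'}(j+n) - g_{\infw{w}'}(j)$. I consider the four cases according to whether each endpoint was lowered.

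\textbf{Upper bound.} If the right endpoint $j+n$ was not lowered, then $g_{\infw{w}'}(j+n) = g_{\infw{x}}(j+n)$ and $g_{\infw{w}'}(j) \geq g_{\infw{x}}(j) - 1$; one wants to rule out the case where only the left endpoint is lowered (which would \emph{increase} the rise by $1$). Here is where the definition of switching bites: if $j$ was lowered, then $j = i-1$ for a switch position $i$, so in $\infw{x}$ we have $a_i = 0$ with $g_{\infw{x}}(i) > \alpha i + C$, i.e. the graph is strictly above the corridor at $i = j+1$. Since $\infw{x}$ is $C$-balanced with frequency $\alpha$, having the graph that high above the line at position $j+1$ forces the rise of $g_{\infw{x}}$ over $[j+1, j+n]$ (or a nearby comparable window) to be small — small enough that even after adding back the $+1$ from the lowered left endpoint, the total stays $\leq \max\{|v|_1 : v\in \lang[n]{\infw{x}}\}$. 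The symmetric argument handles the lower bound: a switch at the \emph{right} endpoint lowers $g_{\infw{w}'}(j+n)$, decreasing the rise, and one must check this cannot push below $\min\{|v|_1 : v \in \lang[n]{\infw{x}}\}$; again, a switch at $j+n$ means the graph of $\infw{x}$ was high above the corridor just after $j+n$, so the preceding rise over $[j,j+n]$ in $\infw{x}$ was already comfortably above the minimum, leaving room to absorb the $-1$.

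\textbf{Main obstacle.} The delicate point is quantifying ``the graph is above the corridor at position $i$'' $\Rightarrow$ ``the rise of $\infw{x}$ over the adjacent window of width $n$ is bounded away from its extreme value by at least $1$''. This is essentially a convexity/$C$-balance estimate: if $g_{\infw{x}}(i)$ exceeds $\alpha i + C$ by some amount, then on both sides of $i$ the graph cannot continue to rise at the maximal rate for long without violating $C$-balance (it would get even further above the asymptote $y = \alpha x$, and $C$-balance caps the deviation). I expect the cleanest route is: $C$-balance gives $|g_{\infw{x}}(b) - g_{\infw{x}}(a) - \alpha(b-a)| \leq C$ for all $a < b$; combining the hypothesis $g_{\infw{x}}(j+1) > \alpha(j+1) + C$ with this inequality applied to $[j+1, j+n]$ and comparing with $\max\{|v|_1 : v\in\lang[n]{\infw{x}}\} \geq \alpha n$ should yield the needed slack, perhaps after also invoking continuity of abelian complexity (Lemma~\ref{lem:continuity}) to slide the window by one position. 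Once these inequalities are assembled, the Corridor Lemma closes the argument, and one should double-check that $\infw{w}'$ is genuinely infinite (it is, being obtained by letter-preserving-length local edits) so that $s_C^+(\infw{x}) \in \infwords$ and the statement $s_C^+(\infw{x}) \in \abclsr{\infw{x}}$ makes sense.
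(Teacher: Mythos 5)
Your skeleton coincides with the paper's: reduce to the Corridor Lemma, note that each switch lowers exactly one grid point of the graph by $1$ and that distinct switches act on disjoint letter pairs, and then do a case analysis on whether the two endpoints of the window are affected. The genuine gap is in the only case that actually matters, namely a switch at the left endpoint but not at the right one, which raises the weight of the window by $1$. You propose to close it with a quantitative $C$-balance estimate, and this fails on two counts. First, the lemma is stated for an arbitrary binary word with uniform frequency $\alpha$; the parameter $C$ in $s^+_C$ is just a real number and carries no balance information, so ``$\infw{x}$ is $C$-balanced'' is an imported hypothesis (and even inside the proof of \autoref{Prop:C-bal} the squeezing constant is $C_2-\varepsilon$, not the balance constant). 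Second, even granting balance, the inequality does not close: from $g_{\infw{x}}(j+1)>\alpha(j+1)+C$ you only get that the rise of $g_{\infw{x}}$ over $[j+1,j+n]$ is below roughly $\alpha n+(C_2-C)$, where $C_2=\limsup_x\bigl(g_{\infw{x}}(x)-\alpha x\bigr)$. Since $\max_{v\in\lang[n]{\infw{x}}}|v|_1$ can be as small as $\lceil\alpha n\rceil$, the required bound $W+1\le\max_{v\in\lang[n]{\infw{x}}}|v|_1$ would force something like $C\ge C_2+1$, which is exactly the regime in which squeezing does nothing; the lemma is applied with $C<C_2$.

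The correct resolution is combinatorial and uses no balance. If the squeezed factor were strictly heavier than every factor of $\infw{x}$ of its length, its weight would exceed $\lceil\alpha n\rceil$, which forces the graph at the \emph{right} endpoint to lie above $y=\alpha x+C$ as well (in $\infw{x}'$, hence in $\infw{x}$, since that endpoint is unswitched). The only possible reason no switch occurred there is that the two letters of $\infw{x}$ straddling the right endpoint do not form $10$. Inspecting them: if the letter just inside the right end is $0$, slide the window one step left (the entering letter on the left is the $1$ of the switched pair, the dropped letter on the right is $0$, so the weight goes up by exactly $1$); if it is $1$, then the next letter is also $1$ and you slide one step right (dropping the $0$ of the switched pair on the left, gaining a $1$ on the right). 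Either way you exhibit a factor of $\infw{x}$ itself with the same length and weight as the squeezed factor, contradicting maximality. This sliding step, which you only gesture at in your last sentence, is the missing idea; the balance estimate cannot substitute for it.
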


\begin{proof}
Assume the converse. Let $\infw{x}' = s^{+}_{C}(\infw{x})$. Then, due to
\autoref{lem:corridorLemma} there exists a factor $a'_i\cdots a'_{j-1}$ such that for each $k$
we have $|a'_i\cdots a'_{j-1}|_1 > |a_k\cdots a_{k+j-i-1}|_1$ 
(the case of $<$ is symmetric).

First we remark that the switches inside the factor (at positions $i+1,\dots,j-1$) do not change the Parikh vector of the factor. So, to change the Parikh vector, we must have a switch at position $i$ or $j$ (or both). 
Secondly, note that we have a switch at position $\ell$ if and only if
$g_{\infw{x}}(\ell) \neq g_{\infw{x}'}(\ell)$.
\begin{enumerate}[leftmargin=*]
\item 
Switch at $i$ and not in $j$. 


In this case we must have $g_{\infw{x}}(i)>\alpha i + C$, $a'_i=1$, $a_i=0$.
Notice that we have $|a'_i\cdots a'_{j-1}|_1 > \lceil \alpha(j-i)\rceil$ (recall
that $\infw{x}$ always contains a binary word with weight $\lceil n\alpha \rceil$. This in turn
means that $g_{\infw{x}'}(j) > \alpha j +C$ (due to frequency). By the conditions of Case 1 we have that $g_{{\infw{x}}'}(j)=g_{\infw{x}}(j)$, which means that $a_{j-1}a_j\neq 10$. If $a_{j-1}=0$, then by taking $k=i-1$ we get an abelian equivalent factor in $\infw{x}$
(see \autoref{fig:C-balA}). If $a_{j-1}=1$, then $a_j=1$ and we can take $k=i+1$  (see \autoref{fig:C-balB}). 

\begin{figure}
\centering
\begin{subfigure}{0.45\textwidth}
\centering
\begin{tikzpicture}[scale=0.5]

\tikzstyle{every node}=[shape=rectangle,fill=none,draw=none,minimum size=0cm,inner sep=2pt]

    \tikzstyle{every path}=[draw=black,line width = 0.5pt]    
\draw[gray!50] (0,0) grid (11,6);

    \tikzstyle{every path}=[draw=black,line width = 1pt]      
\draw (1,1)  --  (2,2) -- (3,2);
\draw (9,4)  --  (8,4);

\draw[gray!90,-latex,shorten >=1pt] (2,1) to (9,4);
\draw[gray!90,-latex,shorten >=1pt] (1,1) to (8,4);

\draw (3,-1)  --  (4,-1);
\draw[densely dotted] (6,-1)  --  (7,-1);

 \node[fill=white] at (4.5,-1){$u$};
 \node[fill=white] at (7.5,-1){$u'$};

\draw[densely dotted]  (1,1)  --  (2,1) -- (3,2);

 \node[fill=none] at (1.25,1.7){\footnotesize{$1$}};
  \node[fill=none] at (2.5,2.3){\footnotesize{$0$}};
   \node[fill=none] at (1.5,0.7){\footnotesize{$0$}};
    \node[fill=none] at (2.7,1.3){\footnotesize{$1$}};
     \node[fill=none] at (8.5,4.3){\footnotesize{$0$}};

 \filldraw (2,1) circle(2pt);
 \filldraw (9,4) circle(2pt);

 \node[fill=white] at (2,-0.5){$i$};
 \node[fill=white] at (9,-0.5){$j$};

\end{tikzpicture}
\caption{If $a_{j-1}=a'_{j-1}=0$, take $k=i-1$.}
\label{fig:C-balA}
\end{subfigure}\quad\quad
\begin{subfigure}{0.45\textwidth}
\centering
\begin{tikzpicture}[scale=0.5]

\tikzstyle{every node}=[shape=rectangle,fill=none,draw=none,minimum size=0cm,inner sep=2pt]

    \tikzstyle{every path}=[draw=black,line width = 0.5pt]    
\draw[gray!50] (0,0) grid (11,6);

    \tikzstyle{every path}=[draw=black,line width = 1pt]      
\draw (1,1)  --  (2,2) -- (3,2);
\draw (8,3)  --  (10,5);

\draw[gray!90,-latex,shorten >=1pt] (2,1) to (9,4);
\draw[gray!90,-latex,shorten >=1pt] (3,2) to (10,5);

\draw (3,-1)  --  (4,-1);
\draw[densely dotted] (6,-1)  --  (7,-1);

 \node[fill=white] at (4.5,-1){$u$};
 \node[fill=white] at (7.5,-1){$u'$};

\draw[densely dotted]  (1,1)  --  (2,1) -- (3,2);

 \node[fill=none] at (1.25,1.7){\footnotesize{$1$}};
  \node[fill=none] at (2.5,2.3){\footnotesize{$0$}};
   \node[fill=none] at (1.5,0.7){\footnotesize{$0$}};
    \node[fill=none] at (2.7,1.3){\footnotesize{$1$}};
     \node[fill=none] at (8.7,3.3){\footnotesize{$1$}};
      \node[fill=none] at (9.7,4.3){\footnotesize{$1$}};
   
 \filldraw (2,1) circle(2pt);
 \filldraw (9,4) circle(2pt);

 \node[fill=white] at (2,-0.5){$i$};
 \node[fill=white] at (9,-0.5){$j$};

\end{tikzpicture}
\caption{If $a_{j-1}=a'_{j-1}=1$ and $a_{j}=a'_{j}=1$, take $k=i+1$.}
\label{fig:C-balB}
\end{subfigure}\quad\quad
\caption{Case 1 of the proof of \autoref{lem:uppersqueezing}: $a_{i-1}a_i=10$, $a'_{i-1}a'_i=01$.}\label{fig:squeezing_proof}
\end{figure}
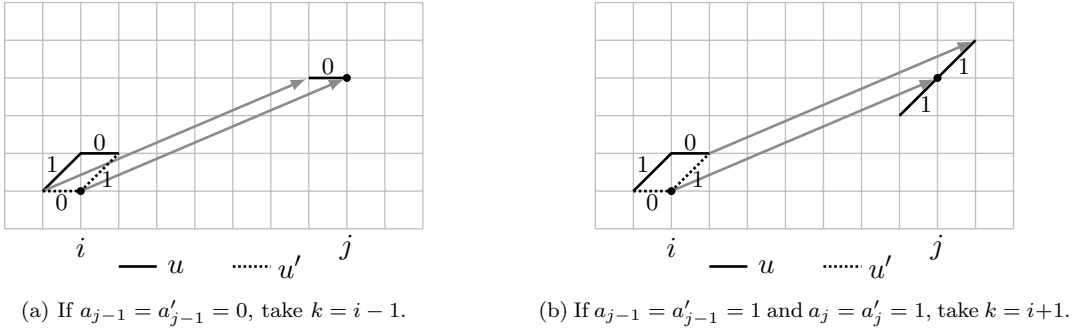

\item Switches at both $i$ and $j$. 

In this case $g_{\infw{x}}(i)>\alpha i + C$ and $g_{\infw{x}}(j)>\alpha j + C$, then the Parikh vector does not change (we can take $k=i$).

\item Switch at $j$ and not in $i$. 
In this case $a_{j-1}=1$ and $a'_{j-1}=0$, so  $|a'_i\cdots a'_{j-1}|_1<|a_k\cdots a_{k+j-i-1}|_1$, which contradicts our assumption of $a'_i\cdots a'_{j-1}$ being heavier than the factors of $\infw{x}$.
\qedhere
\end{enumerate}
\end{proof}

To prove \autoref{Prop:C-bal}, we prime the situation as follows. Notice that the property of
a word being $C$-balanced for some $C$ is equivalent to the property that its graph lies between two lines%
\footnote{If, e.g., the graph goes above the line $y = \alpha x + C$, say $g_{\infw{x}}(i) > \alpha i + C$, then the prefix of length $i$ has weight at least
$\lceil i \alpha \rceil + C$. But $\infw{x}$ also contains a factor with weight
$\lfloor i \alpha \rfloor$ contradicting the $C$-balancedness of $\infw{x}$.}
$y=\alpha x + C_1$ and $y=\alpha x + C_2$ for some $C_1, C_2\in\mathbb{R}, C_1 < C_2$.
Here we choose $C_1$ and $C_2$ to be the largest and the smallest possible, i.e. $C_1=\limsup \{C \colon g_{\infw{x}}(x) \geq \alpha x + C\}$ and
$C_2=\liminf \{C \colon g_{\infw{x}}(x)\leq \alpha x + C\}$.  
Notice that the line $\alpha x + C'$ contains at most one integral point since $\alpha$ is
irrational.

\begin{definition}
Let $\infw{x}$ be an infinite binary word that is $C$-balanced. The \emph{width} of a factor
$v$ of $\infw{x}$ is defined as
$\sup_{x}(g_{\infw{x}}(x)-\alpha x) - \inf_{x}(g_{\infw{x}}(x)-\alpha x)$, where the maximum and the minimum is chosen from the positions $x$ in an occurrence of $v$ in ${\infw{x}}$.
\end{definition}
Similarly we can define a width for an infinite word. In fact, a width of a factor depends only on the frequency of letters in $\infw{w}$ and not on the word $\infw{w}$ itself. Clearly, a width of a factor $v$ of an infinite word $\infw{w}$ cannot be bigger than the width of the $\infw{w}$.

\begin{proof}[Proof of \autoref{Prop:C-bal}]
The proof is based on the operation of upper $C_2-\varepsilon$-squeezing; in fact, choosing different values of $\varepsilon$, we can get different minimal subshifts from the abelian closure of the initial word. 
 
 The proof is split into several claims. 

\begin{claim}\label{claim4}
For each small enough $\varepsilon > 0$ there exist infinitely many points of the graph $g_{\infw{x}}$ in
the stripe between $y=\alpha x + C_2-\varepsilon$ and $y=\alpha x + C_2$. Moreover, the gaps between these points are bounded.
\end{claim}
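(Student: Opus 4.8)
The plan is to recast everything in terms of the deviation function $h(x) = g_{\infw{x}}(x) - \alpha x$, which records the signed vertical distance from the graph to a line of slope $\alpha$. In this language $C_2 = \sup_{x\ge 0} h(x)$ and $C_1 = \inf_{x\ge 0} h(x)$, and an integer point $(x, g_{\infw{x}}(x))$ lies in the sought-after strip precisely when $h(x) \in (C_2-\varepsilon, C_2]$; here the upper inequality $h(x)\le C_2$ is automatic, so the real content is producing, for every small $\varepsilon>0$, infinitely many positions with bounded gaps at which $h(x) > C_2 - \varepsilon$. Write $W = C_2 - C_1$ for the width of $\infw{x}$; it is positive (and in fact $>1$ since $\infw{x}$ is not Sturmian, though this is not needed here). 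The one non-trivial ingredient I would isolate is that differences of $h$ are \emph{local}: $h(x)-h(y)$ depends only on the factor $a_{y+1}\cdots a_x$. Consequently, if $u = a_1\cdots a_p$ is a prefix of $\infw{x}$ and $u$ reoccurs starting at position $q$ (that is, $a_{q+1}\cdots a_{q+p}=u$), then $h(q+j) = h(q) + h(j)$ for every $0\le j\le p$.

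Next I would pick a convenient prefix. Since $\max_{0\le j\le p} h(j)$ increases to $C_2$ and $\min_{0\le j\le p} h(j)$ decreases to $C_1$ as $p\to\infty$, the width of the prefix $a_1\cdots a_p$, namely $\max_{0\le j\le p}h(j) - \min_{0\le j\le p}h(j)$, tends to $W$. Hence for $p$ large enough we may fix $u = a_1\cdots a_p$ with $M_u - m_u > W - \varepsilon$, where $M_u = \max_{0\le j\le p}h(j)$ and $m_u = \min_{0\le j\le p}h(j)$. By uniform recurrence, fix $N$ with $N\ge p$ such that every factor of $\infw{x}$ of length $N$ contains an occurrence of $u$; in particular $u$ occurs infinitely often, and the starting positions of consecutive occurrences differ by at most $N$.

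Then I would analyse a single occurrence of $u$ at a position $q$. Choosing $j^{*}$ with $h(j^{*}) = m_u$ and using $h(q+j^{*}) = h(q)+m_u \ge C_1$ (as $C_1$ is the infimum of $h$), we get $h(q) \ge C_1 - m_u$. Choosing $j^{**}$ with $h(j^{**}) = M_u$, the position $x = q + j^{**}$ then satisfies
\[
h(x) = h(q) + M_u \ge (C_1 - m_u) + M_u = C_1 + (M_u - m_u) > C_1 + (W - \varepsilon) = C_2 - \varepsilon,
\]
while $h(x)\le C_2$. Thus each occurrence of $u$ yields an integer position $x$ with $(x,g_{\infw{x}}(x))$ in the strip; since the occurrences of $u$ recur with gaps at most $N$ and the offset $j^{**}$ is fixed, these positions also recur with gaps at most $N$. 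This gives infinitely many points of $g_{\infw{x}}$ in the strip with bounded gaps, as claimed.

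The crux — and essentially the only place where something has to be seen rather than computed — is the step in the third paragraph: the absolute value of $h$ at a reoccurrence of $u$ is not controlled at all, but the \emph{oscillation} of $h$ across that occurrence equals the width of $u$, which we arranged to exceed $W - \varepsilon$; since $h$ is globally trapped in an interval of length $W$, an oscillation of size larger than $W - \varepsilon$ inside it must reach within $\varepsilon$ of the top value $C_2$. The remaining ingredients (monotone convergence of prefix widths to $W$, and the bounded-gap bookkeeping from uniform recurrence) are routine.
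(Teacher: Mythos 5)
Your proof is correct and follows essentially the same strategy as the paper's: both isolate a factor whose deviation oscillation exceeds $W-\varepsilon$ (the paper takes the factor between a point within $\varepsilon/2$ of the upper line and one within $\varepsilon/2$ of the lower line), invoke uniform recurrence to find that factor with bounded gaps, and use the global confinement of the graph to a strip of width $W$ to force the high point of each occurrence above $C_2-\varepsilon$. Your variant, which works with exact recurrences of a prefix and tracks both the interior minimum and maximum via the additivity $h(q+j)=h(q)+h(j)$, even sidesteps the paper's (harmless) ``without loss of generality'' assumption on the order of the high and low points.
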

\begin{proof}
(See \autoref{fig:C-bal}.)
Since $C_2 = \liminf \{C \colon g_{\infw{x}}(x)\leq \alpha x + C\}$, there exists by definition a point
$x$ such that $g_{\infw{x}}(x) > \alpha x + C_2-\varepsilon/2$. Symmetrically, there exists a
point $x'$ such that $g_{\infw{x}}(x') < \alpha x + C_1+\varepsilon/2$. Without loss of
generality we may assume that $x < x'$. Now since the word is uniformly recurrent, a factor
abelian equivalent to $a_{x}\dots a_{x'-1}$ occurs with bounded gaps; let $N(\varepsilon)$
denote an upper bound on the gaps. For each large enough occurrence of this factor its initial point is
above the line $y= \alpha x + C_2-\varepsilon$, and its final point is below the line
$y= \alpha x + C_1+\varepsilon$. Indeed, if the initial point was below
the line $y= \alpha x + C_2-\varepsilon$ for arbitrarily large $x$, then the final point would lie below $\alpha x + C_1 - \varepsilon/2$ for arbitrarily large $x$.
\end{proof}

We will now use an operation of upper $(C_2-\varepsilon)$-squeezing of $\infw{x}$. For simplicity, assume that $\varepsilon < \alpha$ (this assumption is made in order to flip all the points on the stripe of width $\varepsilon$), and that $C_2-\varepsilon-C_1>1$ (this corresponds to Sturmian width; we make this assumption to guarantee that the flipped points remain above the line $\alpha x + C_1$). Due to \autoref{lem:uppersqueezing}, if
${\infw{x}}'=s^{+}_{C_2-\varepsilon}({\infw{x}})$, then ${\infw{x}}'\in \abclsr{{\infw{x}}}$.



\begin{claim}\label{claim5}
Any uniformly recurrent word from $\soc{{\infw{x}}'}$ is different from words in
$\soc{{\infw{x}}}$.
\end{claim}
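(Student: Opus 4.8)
The plan is to show that the squeezed word $\infw{x}'$ has strictly smaller width than $\infw{x}$, and that this property is inherited by every uniformly recurrent word in $\soc{\infw{x}'}$; since every uniformly recurrent word in $\soc{\infw{x}}$ has exactly the width of $\infw{x}$ (the width being determined by the frequency $\alpha$ together with the extreme constants $C_1, C_2$), the two orbit closures can share no uniformly recurrent word. First I would verify that after the $(C_2-\varepsilon)$-squeezing, the graph $g_{\infw{x}'}$ lies in the stripe between $y = \alpha x + C_1$ and $y = \alpha x + C_2 - \varepsilon$: the lower bound is guaranteed by the standing assumption $C_2 - \varepsilon - C_1 > 1$ (a flipped $10 \to 01$ at a point above $\alpha x + C_2-\varepsilon$ lowers that intermediate point by $1$, hence keeps it above $\alpha x + C_1$, and no other point moves), and the upper bound is immediate since every point with $g_{\infw{x}}(i) > \alpha i + C_2 - \varepsilon$ that sits at the top of a $10$ pattern is flipped, while the assumption $\varepsilon < \alpha$ ensures that, by \autoref{claim4}, \emph{every} point exceeding the line gets flipped (points strictly above $\alpha x + C_2$ cannot be isolated peaks of the graph once $\varepsilon < \alpha$, since consecutive graph values differ by $0$ or $1$). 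Thus $\infw{x}'$ has width at most $C_2 - \varepsilon - C_1 < C_2 - C_1$.

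Next I would transfer this width bound to $\soc{\infw{x}'}$. The width of an infinite word, as defined in the excerpt, depends only on the letter frequency, so any $\infw{y} \in \soc{\infw{x}'}$ has a well-defined width; since $\lang{\soc{\infw{x}'}} = \lang{\infw{x}'}$, every factor of $\infw{y}$ is a factor of $\infw{x}'$, and hence the oscillation $\sup_x(g_{\infw{y}}(x)-\alpha x) - \inf_x(g_{\infw{y}}(x)-\alpha x)$ is at most the width of $\infw{x}'$, which is strictly less than $C_2 - C_1$. On the other side, I would argue that any uniformly recurrent $\infw{z} \in \soc{\infw{x}}$ has width exactly $C_2 - C_1$: by \autoref{claim4} (applied with arbitrarily small parameter) $\infw{x}$ contains factors whose graph rises to within $\varepsilon$ of $\alpha x + C_2$ and, symmetrically, descends to within $\varepsilon$ of $\alpha x + C_1$, and by uniform recurrence such factors recur with bounded gaps, so some single long factor $w$ of $\infw{x}$ realizes oscillation $> C_2 - C_1 - 2\varepsilon$; this $w$ must appear in $\infw{z}$ (as $\lang{\infw{z}} = \lang{\infw{x}}$), forcing $\infw{z}$ to have width $\geq C_2 - C_1 - 2\varepsilon$ for all small $\varepsilon$, i.e.\ width $\geq C_2 - C_1$, and the reverse inequality is the $C$-balancedness of $\infw{x}$. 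Comparing the two bounds — width $< C_2 - C_1$ for uniformly recurrent words in $\soc{\infw{x}'}$, width $\geq C_2 - C_1$ for uniformly recurrent words in $\soc{\infw{x}}$ — yields the claim.

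The main obstacle I anticipate is the lower-bound step for $\infw{x}'$: one must be careful that the squeezing operation does not create new \emph{descents} below the line $\alpha x + C_1$. A switch at position $i$ replaces $a_{i-1}a_i = 10$ by $01$, which leaves $g(i-1)$ and $g(i+1)$ unchanged and only decreases $g(i)$ by $1$; since the switch fires only when $g_{\infw{x}}(i) > \alpha i + C_2 - \varepsilon$, the new value is $> \alpha i + C_2 - \varepsilon - 1 > \alpha i + C_1$ by the assumption $C_2 - \varepsilon - C_1 > 1$. The other delicate point is ensuring that the squeezed word genuinely has \emph{smaller} width and not merely width $\leq C_2 - C_1$; this is where one uses that $\infw{x}$ is not Sturmian, equivalently (being uniformly recurrent with irrational frequency) that $C_2 - C_1 > 1$, so that a strictly positive margin $\varepsilon$ can be chosen satisfying all three constraints $\varepsilon < \alpha$, $C_2 - \varepsilon - C_1 > 1$, and $\varepsilon > 0$ simultaneously.
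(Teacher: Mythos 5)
Your argument is correct and is essentially the paper's own proof, just spelled out in more detail: the paper likewise observes that $g_{\infw{x}'}$ fits in a stripe of width $C_2-\varepsilon-C_1$ while the factors produced in \autoref{claim4} have width greater than that and occur in every element of the minimal subshift $\soc{\infw{x}}$, so they cannot occur in $\infw{x}'$ or in anything in $\soc{\infw{x}'}$. The only (harmless) deviations are an off-by-one in which graph point a switch lowers and the unnecessary detour through showing the width of words in $\soc{\infw{x}}$ is \emph{exactly} $C_2-C_1$, where the fixed-$\varepsilon$ bound $>C_2-\varepsilon-C_1$ already suffices.
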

\begin{proof}
Indeed, there is no factor from the proof of \autoref{claim4} in ${\infw{x}}'$ (their width is greater than $C_2-\varepsilon-C_1$, so they do not fit the stripe of ${\infw{x}}'$).
\end{proof}

We now claim that varying $\varepsilon$ in an appropriate way we obtain infinitely many minimal
subshifts in $\abclsr{\infw{x}}$. 

Given $\varepsilon$, let $N(\varepsilon)$ be the constant from \autoref{claim4}
(it is actually given by uniform recurrence), giving an upper bound for the points of the graph of $\infw{x}$ above the line $y=\alpha x +C_2-\varepsilon$.

Now choose $\varepsilon_1 < \varepsilon$ to be the constant such that the points of the grid in
the stripe between the lines  $y=\alpha x +C_2-\varepsilon_1$ and $y=\alpha x +C_2$ are at
distance at least $2N(\varepsilon)$ (the value of $\varepsilon_1$ is given by the irrational
value $\alpha$).

By the choice of $N(\varepsilon)$ and $\varepsilon_1$, we have points of the graph
$g_{\infw{x}}$ in the stripe between $y=\alpha x +C_2-\varepsilon$ and $y=\alpha x +C_2-\varepsilon_1$ with gap at most $2N(\varepsilon)$.

We will now prove that for the word
${\infw{x}}''=s^{+}_{C_2-\varepsilon_1}({\infw{x}})$ we have that each 
uniformly recurrent point in $\soc{{\infw{x}}''}$ is different from any 
point from $\soc{{\infw{x}}'}$. Indeed, from what we just proved above, 
$g_{\infw{x}}$ (and hence $g_{\infw{x}''}$) has points between $y=\alpha x +C_2-\varepsilon$ and $y=\alpha x +C_2-\varepsilon_1$ with gap at most 
$2N(\varepsilon)$. Let $\tilde{\varepsilon}$ be such that the integer 
points in the stripe between $y=\alpha x +C_2-\varepsilon$ and $y=\alpha x +C_2-\varepsilon+\tilde{\varepsilon}$ are with gap at least 
$4N(\varepsilon)$ (the value
$\tilde{\varepsilon}$, like $\varepsilon_1$, is defined by the irrational $\alpha$). So, the
graph of ${\infw{x}}$ (and hence of ${\infw{x}}''$) has points between
$y=\alpha x +C_2-\varepsilon+\tilde{\varepsilon}$ and $y=\alpha x +C_2-\varepsilon_1$ with gap
at most $4N(\varepsilon)$. (See \autoref{fig:Bal1} for an illustration of the situation.) Now, by an argument symmetric to \autoref{claim4} applied for
$\tilde{\varepsilon}$, we get that there is an upper bound $N(\tilde{\varepsilon})$ for the gaps between the points of $g_{\infw{x}}$ between the lines $y=\alpha x +C_1$ and $y=\alpha x +C_1+\tilde{\varepsilon}$. Taking $\tilde{N}=\max(4N(\varepsilon), N(\tilde{\varepsilon}))$ and considering points in the two stripes (between the lines  $y=\alpha x +C_1$ and
$y=\alpha x +C_1+\tilde{\varepsilon}$ and the lines
$y=\alpha x +C_2-\varepsilon+\tilde{\varepsilon}$ and $y=\alpha x +C_2-\varepsilon_1$), we
have factors of length at most $\tilde{N}$ of width at least $C_2-C_1-\varepsilon$ with gap
at most $N'$. Since there are only finitely many such factors, one of them occurs with
bounded gap in ${\infw{x}}''$, and hence in any word from $\soc{{\infw{x}}''}$. On the other
hand, these factors are too wide to fit into the stripe for ${\infw{x}}'$ (which is of width
$C_2-C_1-\varepsilon$), so in fact $\soc{{\infw{x}}'}$ and $\soc{{\infw{x}}''}$ do not
intersect, and hence $\soc{{\infw{x}}''}$ contains a new minimal subshift which is in
$\abclsr{\infw{x}}$.

We continue this line of reasoning taking $\varepsilon_1$ instead of $\varepsilon$ etc., each time getting a new minimal subshift in the abelian closure of $w$. This concludes the proof.
\end{proof}

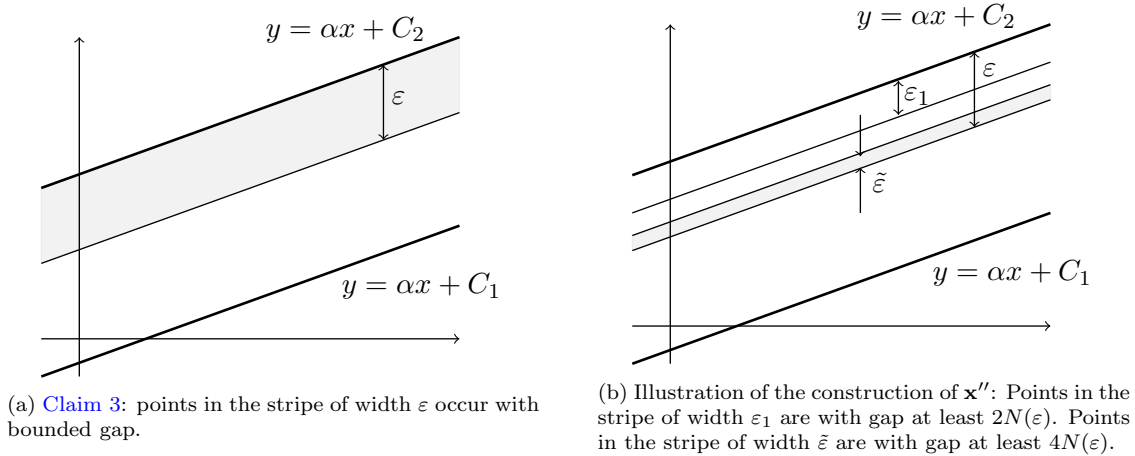
\begin{figure}
\centering
\begin{subfigure}{0.45\textwidth}
\centering
\begin{tikzpicture}[scale=1]
\tikzstyle{every node}=[shape=rectangle,fill=none,draw=none,minimum size=0cm,inner sep=2pt]
    \tikzstyle{every path}=[draw=black,line width = 0.5pt]

    \draw[->] (-0.5,0)  to   (5,0);
    \draw[->] (0,-0.5)  to   (0,4);

    \tikzstyle{every path}=[draw=black,line width = 1pt]    
    \draw (-0.5,-0.5) to (5,1.5);
    \draw[name path = A] (-0.5,2) to (5,4);

    \tikzstyle{every path}=[draw=black,line width = 0.5pt]
    \draw[name path = B] (-0.5,1) to (5,3);
\tikzfillbetween[of=A and B,on layer=bg]{gray, opacity=0.1};

 \node at (4.5,0.7){$y=\alpha x + C_1$};
 \node at (3.5,4.1){$y=\alpha x + C_2$};

\draw[<->] (4,3.63) to (4,2.64);

\node at (4.2,3.2){$\varepsilon$};

\end{tikzpicture}
\caption{\autoref{claim4}: points in the stripe of width $\varepsilon$ occur with bounded gap.}
\label{fig:C-bal}
\end{subfigure}\quad\quad
\begin{subfigure}{0.45\textwidth}
\centering
\begin{tikzpicture}[scale=1]
\tikzstyle{every node}=[shape=rectangle,fill=none,draw=none,minimum size=0cm,inner sep=2pt]
    \tikzstyle{every path}=[draw=black,line width = 0.5pt]

    \draw[->] (-0.5,0)  to   (5,0);
    \draw[->] (0,-0.5)  to   (0,4);

    \tikzstyle{every path}=[draw=black,line width = 1pt]    
    \draw (-0.5,-0.5) to (5,1.5);
    \draw (-0.5,2) to (5,4);

    \tikzstyle{every path}=[draw=black,line width = 0.5pt]
    \draw[name path = A] (-0.5,1) to (5,3);
    \draw (-0.5,1.5) to (5,3.5);
    \draw[name path = B] (-0.5,1.2) to (5,3.2);
    
    \tikzfillbetween[of=B and A, on layer=bg]{gray, opacity=0.1};

 \node at (4.5,0.7){$y=\alpha x + C_1$};
 \node at (3.5,4.1){$y=\alpha x + C_2$};

\draw[<->] (4,3.63) to (4,2.64);

\node at (4.2,3.4){$\varepsilon$};

\draw[->] (2.5,1.5) to (2.5,2.1);
\draw[->] (2.5,2.8) to (2.5,2.25);

\node at (2.75,1.9){$\tilde{\varepsilon}$};

\draw[<->] (3,2.8) to (3,3.25);

\node at (3.25,3.05){$\varepsilon_1$};

\end{tikzpicture}
\caption{Illustration of the construction of $\infw{x}''$: Points in the stripe of width $\varepsilon_1$ are with gap at least $2N(\varepsilon)$. Points in the stripe of width $\tilde{\varepsilon}$ are with gap at least $4N(\varepsilon)$.}
\label{fig:Bal1}
\end{subfigure}
\caption{An illustration for the proof of \autoref{Prop:C-bal}.}
\end{figure}

\section{Some structural results on binary abelian closures} \label{sec:tools}

The results
presented in this section will be used as tools in proving the main result of the subsequent
section (and the last and the hardest case of the main theorem), though they might have independent interest.  We consider certain operations on binary words, and consider how they
affect abelian closures. We first discuss morphic images of abelian closures. We then
define an operation, which resembles an elementary \emph{cellular automaton} on
right-infinite words (see the precise definition in \autoref{subsec:CA}). And finally, we give a certain description of non-Sturmian binary words in terms of Standard pairs. 

\subsection{Morphisms and binary abelian closures}

Morphisms are an essential tool in the study of combinatorics on words. In this subsection we study the interaction between abelian closures and morphisms. As the main
result of this subsection, we show that, given a \emph{Sturmian morphism $f$} (see definition below), if $\infw{z} \in \abclsr{\infw{y}}$ then
$f(\infw{z}) \in \abclsr{f(\infw{y})}$.
However, in general this property does not hold even for binary alphabets, as is illustrated by the following example. 

\begin{example}\label{ex:morphicImage}
Take $\infw y = (0011)^{\omega}$ and observe that $\infw z = (01)^{\omega} \in \abclsr{\infw{y}}$ ($\infw{z}$ is a periodic Sturmian of slope $1/2$). Define $f$ by $f(0) = 100001$ and $f(1) = 010$. Hence
\begin{equation*}
f(\infw{y}) = (100001100001010010)^{\omega} \text{ and } f(\infw{z}) = (100001010)^{\omega}.
\end{equation*}
Observe now that the length $5$ factors of $f(\infw{y})$ all have at most two occurrences of $1$.
On the other hand, $f(\infw{z})$ contains the factor $10101$ which has three occurrences of
$1$. Hence $f(\infw{z}) \notin \abclsr{f(\infw{y})}$.
\end{example}

Let us now recall Sturmian morphisms and standard morphisms. For a concise
treatment of these morphisms, see \cite[\S2.3]{MR1905123}. We then consider the abelian
closures of morphic images of binary words.

A morphism $\varphi:\{0,1\}^*\to \{0,1\}^*$ is called \emph{Sturmian} if, for
each Sturmian word $\infw x$, the word $\varphi(\infw x)$ is Sturmian.
We shall employ a striking result of Mignosi and S\'e\'ebold
\cite{MignosiSeebold1993:morphismes_sturmiens_et_regles_de_rauzy} which
characterizes the set of Sturmian morphisms as the finitely generated monoid
with generators
\begin{equation*}
    D:\begin{cases}0\mapsto 01\\ 1\mapsto 0\end{cases}; \quad
    E: \begin{cases}0\mapsto 1\\ 1\mapsto 0\end{cases}; \quad
    G: \begin{cases}0\mapsto 10\\ 1\mapsto 0\end{cases}.
\end{equation*}

A morphism $\varphi$ is called \emph{standard} if, for any (unordered) standard pair $\{u,v\}$,
the pair $\{\varphi(u),\varphi(v)\}$ is standard. Standard moprhisms were studied by
A. de Luca in \cite{deLuca1996:on_standard_sturmian_moprhisms}. In that article, the set of
standard morphisms is characterized as the finitely generated monoid with generators $\{D,E\}$
defined above.

We also recall the following result from
\cite[Thms.~8,~9,~12]{deLuca1996:on_standard_sturmian_moprhisms}. 
\begin{theorem}\label{thm:AdeLucaStandarMorphism}
A morphism $\varphi$ is standard if and only if $\varphi(0) = x$ and $\varphi(1) = y$ for some
$x,y$ with $\{x,y\}$ an unordered standard pair.
\end{theorem}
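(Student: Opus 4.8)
The plan is to treat the two implications separately: the forward implication is immediate, and the converse follows by a straightforward induction over the inductive definition of standard pairs.

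For the ``only if'' direction, suppose $\varphi$ is standard. Since $(0,1)$ is a standard pair, the defining property of a standard morphism applied to the unordered standard pair $\{0,1\}$ yields that $\{\varphi(0),\varphi(1)\}$ is a standard pair; taking $x=\varphi(0)$ and $y=\varphi(1)$ gives the claim.

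For the ``if'' direction, assume $\{\varphi(0),\varphi(1)\}$ is an unordered standard pair. I would prove by induction over the generation of standard pairs (from $(0,1)$ under $\Gamma$ and $\Delta$) that $\{\varphi(u),\varphi(v)\}$ is an unordered standard pair for every standard pair $(u,v)$; since every standard word occurs as a component of a standard pair, this is exactly the statement that $\varphi$ is standard. The base case $(u,v)=(0,1)$ is the hypothesis. For the inductive step I would use that $\varphi$ is a morphism, so $\varphi(uv)=\varphi(u)\varphi(v)$ and $\varphi(vu)=\varphi(v)\varphi(u)$: thus $\varphi$ sends $\Gamma(u,v)=(u,uv)$ and $\Delta(u,v)=(vu,v)$ to pairs built out of $\varphi(u)$ and $\varphi(v)$ by concatenation, in precisely the shapes produced by one application of $\Gamma$ or $\Delta$ to whichever of $(\varphi(u),\varphi(v))$ or $(\varphi(v),\varphi(u))$ is the ordered standard pair supplied by the induction hypothesis. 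Concretely, if $(\varphi(u),\varphi(v))$ is standard then $\Gamma(\varphi(u),\varphi(v))=(\varphi(u),\varphi(uv))$ and $\Delta(\varphi(u),\varphi(v))=(\varphi(vu),\varphi(v))$ are standard pairs, while if $(\varphi(v),\varphi(u))$ is standard then $\Gamma(\varphi(v),\varphi(u))=(\varphi(v),\varphi(vu))$ and $\Delta(\varphi(v),\varphi(u))=(\varphi(uv),\varphi(u))$ are standard pairs. In either case both $\{\varphi(u),\varphi(uv)\}$ and $\{\varphi(vu),\varphi(v)\}$ are unordered standard pairs, which closes the induction.

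Almost nothing here is genuinely hard; the only place demanding care is the bookkeeping in the inductive step of which ordering of $\{\varphi(u),\varphi(v)\}$ is the standard one and matching it with the right application of $\Gamma$ versus $\Delta$ — so I expect the ``main obstacle'' to be purely organizational (a two-case check). One should also note, though it does not affect the argument, that a standard pair never has $\eps$ as a component and, apart from $(0,1)$, always has components of distinct lengths; hence the hypothesis actually forces $\varphi$ to be non-erasing with $\varphi(0)\neq\varphi(1)$ and pins down the ordered pair uniquely. Alternatively, the converse implication can be extracted from de Luca's description of the standard morphisms as the monoid generated by $D$ and $E$, but the direct induction above is self-contained.
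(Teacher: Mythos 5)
Your proof is correct. Note first that the paper does not prove this statement at all: it is imported verbatim from de Luca's article (Theorems 8, 9 and 12 of \cite{deLuca1996:on_standard_sturmian_moprhisms}), so there is no in-paper argument to compare against. Your self-contained argument is sound: the ``only if'' direction is indeed just the definition of a standard morphism applied to the unordered standard pair $\{0,1\}$, and the ``if'' direction is a legitimate structural induction over the inductive definition of standard pairs (the smallest set containing $(0,1)$ and closed under $\Gamma$ and $\Delta$), for which it suffices to verify the base case and closure of the property under one application of $\Gamma$ or $\Delta$. Your two-case bookkeeping in the inductive step is the only delicate point and you carry it out correctly: whichever ordering of $\{\varphi(u),\varphi(v)\}$ is the standard one, applying $\Gamma$ and $\Delta$ to it produces, between them, ordered standard pairs whose underlying unordered pairs are exactly $\{\varphi(u),\varphi(uv)\}$ and $\{\varphi(vu),\varphi(v)\}$, using only $\varphi(uv)=\varphi(u)\varphi(v)$. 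The closing remarks (non-erasure, distinctness of components) are tangential but harmless. The one thing this elementary route does \emph{not} give you is the rest of de Luca's characterization used elsewhere in that subsection, namely that the standard morphisms form the monoid generated by $D$ and $E$; your induction establishes only the equivalence actually stated in the theorem, which is all that is claimed.
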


We are now ready to show the main result of this subsection.
\begin{proposition}\label{prop:AbSsClosedUnderSturmian}
Let $\varphi$ be a Sturmian morphism. Then $\infw{z} \in \abclsr{\infw{y}}$ implies
$\varphi(\infw{z}) \in \abclsr{\varphi(\infw{y})}$.
\end{proposition}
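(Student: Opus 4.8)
The plan is to reduce the statement to the generators $D$, $E$, $G$ of the monoid of Sturmian morphisms, using the Mignosi--S\'e\'ebold characterization. Since $\abclsr{\cdot}$ behaves well under composition --- if $\infw z\in\abclsr{\infw y}$ implies $\varphi(\infw z)\in\abclsr{\varphi(\infw y)}$ for two morphisms $\varphi$, then the same holds for their composite --- it suffices to verify the claim for each of the three generators. The morphism $E$ is trivial: it merely exchanges $0$ and $1$, so it exchanges the components of every Parikh vector, and $\ablang{E(\infw z)}\subseteq\ablang{E(\infw y)}$ is immediate from $\ablang{\infw z}\subseteq\ablang{\infw y}$. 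So the real work is with $D:0\mapsto 01,\,1\mapsto 0$ and $G:0\mapsto 10,\,1\mapsto 0$; note $G = E\circ D\circ E$ after relabelling (more precisely $G$ and $D$ differ only by the order of the letters in the images, hence the graph-function argument for one transfers to the other by a reflection), so in fact it is enough to handle $D$.

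For $D$, the approach is to use the \hyperref[lem:corridorLemma]{Corridor Lemma}: I must show that for every $n$, the minimal (resp.\ maximal) weight of a length-$n$ factor of $D(\infw z)$ is at least (resp.\ at most) that of a length-$n$ factor of $D(\infw y)$. A length-$n$ factor $u$ of $D(\infw z)$ can be written as $u = r\,D(w)\,s$, where $w$ is a factor of $\infw z$, $r$ is a proper suffix of $D(a)$ for the letter $a$ preceding $w$, and $s$ is a proper prefix of $D(b)$ for the letter $b$ following $w$. Since $|D(0)| = 2$ and $|D(1)| = 1$, the ``border pieces'' $r,s$ have length at most $1$, so $u$ is a bounded perturbation of $D(w)$; concretely, since $|D(c)|_1 = |c|_0$ for a letter $c$, one has $|D(w)|_1 = |w|_0 = |w| - |w|_1$, and the weight of $u$ is $|w|_0$ plus a correction in $\{0,1\}$ coming from whether $r$ or $s$ contributes the single $1$ of a block $01$. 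The key observation is that $w$ ranges over $\lang{\infw z}$, hence over $\lang[{\le m}]{\infw z}$ for appropriate $m$, and $|w| = |u| - |r| - |s|$ is essentially determined by $|u|$ up to a bounded amount; moreover $|w|$ itself is constrained by how many $0$s the prefix/suffix structure forces. The cleanest route is to argue directly at the level of the graph $g$: the path of $D(\infw z)$ is obtained from the path of $\infw z$ by replacing each unit right-step at height $h$ (a $0$) with a right-step to height $h+1$ followed by a right-step staying at $h+1$ --- i.e. drawing $01$ --- and each up-step (a $1$) with a single right-step staying at the current height; so the graph of $D(\infw z)$ is, after the affine reparametrisation $i\mapsto$ (position in $D$-image), a ``stretched'' copy of the graph of $\infw z$, and extremal weights of windows transform monotonically. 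This monotone correspondence, applied to both $\infw z$ and $\infw y$, combined with $\ablang{\infw z}\subseteq\ablang{\infw y}$, yields the two Corridor-Lemma inequalities for $D(\infw z)$ versus $D(\infw y)$.

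The main obstacle I anticipate is the careful bookkeeping of the border pieces $r,s$: a length-$n$ window of $D(\infw z)$ does not correspond to a single length-$n'$ window of $\infw z$ but to a window of variable length $n' \in\{n',n'+1\}$ together with fractional contributions at the two ends, and I must make sure that when I compare with $D(\infw y)$ I am comparing windows of exactly the same length $n$ in the images --- so I should phrase everything as: for the $u$ achieving the minimum weight among length-$n$ factors of $D(\infw z)$, exhibit a factor $w'$ of $\infw y$ with $\Psi(w') = \Psi(w)$ (possible since $w\in\lang{\infw z}$), check that $r\,D(w')\,s$ is again a legitimate factor of $D(\infw y)$ of length $n$ --- this requires $w'$ to admit on its left the letter $a$ and on its right the letter $b$ with the appropriate border-compatibility, which is where I may need to invoke that $\infw y$ is recurrent, or instead work with $\soc{\infw y}$ and use that $\ablang{\soc{\infw y}} = \ablang{\infw y}$ together with the fact that $\soc{\infw y}$ is closed under the shift so the needed one-sided extensions of $w'$ exist inside $\soc{\infw y}$. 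Handling this extension issue cleanly --- perhaps by choosing $w'$ to be a factor of an element of $\soc{\infw y}$ extended on both sides, so that $a w' b\in\lang{\soc{\infw y}}=\lang{\infw y}$ and hence $r D(w') s\in\lang{D(\infw y)}=\lang{D(\soc{\infw y})}$, using the already-noted identity $\lang{D(X)} = \lang{D}$ applied to a subshift --- is the one delicate point; everything else is routine arithmetic with the identities $|D(c)|_1 = |c|_0$ and $|D(c)| = |c| + |c|_0$.
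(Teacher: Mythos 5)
Your overall skeleton matches the paper's: reduce to the generators $D$, $E$, $G$ via Mignosi--S\'e\'ebold, dispose of $E$ trivially, and treat $G$ by symmetry with $D$ (your identity $G = E\circ D\circ E$ is actually false --- $E\circ D\circ E$ sends $0\mapsto 1$, $1\mapsto 10$ --- but the hedged version, that $G$ is handled by a reflected copy of the $D$ argument, is exactly what the paper does). The problem is that for $D$ you stop precisely at the point where the real work begins. A length-$n$ factor of $D(\infw{z})$ has the form $rD(w)s$ with $r\in\{\eps,1\}$ and $s\in\{\eps,0\}$, and when $r=1$ you need to exhibit a factor of $D(\infw{y})$ of the same length and weight. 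You correctly flag that this requires a factor $w'\sim w$ of $\infw{y}$ admitting a \emph{specific} one-sided extension (by the letter $0$), and then propose to get it from recurrence or from shift-closure of $\soc{\infw{y}}$. That does not work: shift-closure and recurrence guarantee that $w'$ extends by \emph{some} letter on each side, not by the letter you need. For instance, from $0w\in\lang{\infw{z}}$ you only get some $t\sim 0w$ in $\lang{\infw{y}}$, and $t$ need not begin or end with $0$ (e.g.\ $w=101$, $t=1001$), so no factor abelian equivalent to $w$ is visibly adjacent to a $0$. This is a genuine gap, not bookkeeping.

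The paper closes it with two ingredients you are missing. First, a sliding-window argument: since $\infw{y}$ contains both a factor $t\sim 0w$ (weight $|w|_1$) and a factor $w'1$ with $w'\sim w$ (weight $|w|_1+1$) of the same length $|w|+1$, sliding a window of that length between the two occurrences must at some step increase the weight by one, and at that step the window pair is $0t'$, $t'1$ with $t'\sim w$; hence $0w'$ or $w'0$ occurs in $\infw{y}$ for some $w'\sim w$. Second, when only $w'0$ (and not $0w'$) is available, a rearrangement: since every $D$-image begins with $0$, write $D(w') = 0xD(w'')$ with $x\in\{\eps,1\}$; then $D(w'0)=0xD(w'')01$ contains the factor $xD(w'')01$, which has the same length and weight as $1D(w)$. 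Without these two steps (and their analogue for $1D(w)0$), the case analysis is incomplete. Note also that recasting the problem via the \hyperref[lem:corridorLemma]{Corridor Lemma} or via the graph $g$, as you suggest, does not dissolve the difficulty: to bound the extremal weights of length-$n$ windows of $D(\infw{z})$ by those of $D(\infw{y})$ you must still produce a comparison window of exactly length $n$ in $D(\infw{y})$, and its alignment with the image boundaries is constrained in exactly the same way.
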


\begin{proof}
It suffices to show that the claim holds for the morphisms $E$, $D$ and $G$. To
conclude from that, we may proceed by a simple induction on the length of the shortest
representation of a Sturmian moprhism as a composition of these generating morphisms.

The case of $E$ is trivial. We prove the claim for $D$, the
case of $G$ being symmetric. Let $z$ be a factor of $D(\infw{z})$. We show that there is
an abelian equivalent factor in $D(\infw{y})$ such that $z' \sim z$.
By the form of $D$, $z$ can be written as one of the following forms: $D(w)$, $D(w)0$,
$1D(w)$, and $1D(w)0$, where $w$ is some factor of $\infw{z}$. The claim is easily seen to
hold in the case $w=\eps$, so we assume that $w\neq \eps$.

Assume first that
$z = D(w)$, with $w\in \mathcal{L}(\infw{z})$. Then $\infw{y}$ has a factor $w' \sim w$,
so $D(w') = z' \sim  z$. Assume second that $z = D(w)0$.
There exists a factor $w'x$, $x\in \{0,1\}$, of $\infw{y}$ with $w' \sim w$. By the form of $D$, $D(w'x)$
begins with $D(w')0 = z'$, with $z'\sim z$.

Assume third that $z = 1 D(w)$. Then $0w$ is a factor of $\infw{z}$. Let us first show
that there exists a factor $w' \in \infw{y}$ with $w' \sim w$, and $0w'$ or $w'0$ occurs
in $\infw{y}$. Indeed, since $0w$ occurs in $\infw{z}$, there is a corresponding abelian
equivalent factor $t$ in $\infw{y}$. Consider an occurrence of $w'1$. Assuming that $t$
occurs before $w'1$, by a sliding window argument, there is a factor of the form $0t'1$,
where $0t' \sim t$ and $t'1 \sim w'1$. Thus $0t'$ is the factor we are looking for. The
case that $t$ occurs after $w'1$ is symmetric. Now if $0w'$ occurs in $\infw{y}$, we have
$1D(w') = z'$ and we are done. If $w'0$ occurs in $\infw{y}$, then, by the form of
$D$, we may write $D(w') = 0 x D(w'')$ for some $x\in \{\eps,1\}$ and $w''\in \words$.
Now $xD(w'')0 \sim D(w')$, and thus $xD(w'')01 = z' \sim 1D(w) = z$.

Assume finally that $z = 1 D(w) 0$. Notice again that $0w$ occurs in $\infw{z}$.
Proceeding as in the previous case, there exists a factor $w'\sim w$ such that $0w'$ or
$w'0$ occurs in $\infw{y}$. This time we may choose $z' = D(w'0)$ or $z' = D(0w')$. 
\end{proof}

\subsection{A selfmap on binary abelian closures}\label{subsec:CA}
In this section define a selfmap on binary abelian closures. This mapping has interesting
dynamics, as we shall shortly see. We present the observations as having independent
interest, but the main result of this section will be crucial in our subsequent
constructions.

\begin{definition}\label{def:F-operationCA}
Let us define the operation $T:\{0,1\}^{\N}\to \{0,1\}^{\N}$ by the following rule:
$T(\infw x)$ is obtained from $\infw x$ by replacing each occurrence of $10$
with $01$. Let us further define $F = \sigma \circ T$. Thus $F$ operates on a infinite
binary word by first flipping each occurrence of $10$ to $01$ ($T$), and second removing the first letter ($\sigma$).
\end{definition}

Observe that the operation $T$ is a simplified version of the $C$-squeezing operation used in \autoref{sec:C-bal}.

\begin{remark}
The operation $T$ can be defined for bi-infinite words (words indexed by the set of integers)
as well. In this setting $T$ is a
\emph{cellular automaton} known as the \emph{Traffic cellular automaton}. It is
\emph{Rule $184$} in the system of S. Wolfram \cite{Wolfram1983:statistical_mechanics_of_cellular_automata}.

\end{remark}

Let us show that the mapping $F$ is indeed a selfmap on a binary subshift.
\begin{lemma}\label{lem:TrafficPreservesAbelianSubshift}
For any binary word $\infw x\in \{0,1\}^{\N}$, we have
$F(\infw x) \in \abclsr{\infw x}$.
\end{lemma}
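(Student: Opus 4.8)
The plan is to show that for any $n$, the abelian factors of length $n$ occurring in $F(\infw{x})$ already occur in $\infw{x}$; by the Corridor Lemma this suffices. Write $\infw{x} = a_1a_2\cdots$ and $T(\infw{x}) = b_1b_2\cdots$, so that $F(\infw{x}) = \sigma(T(\infw{x})) = b_2b_3\cdots$. The key observation is that applying $T$ locally rewrites each maximal block of the form $1^p0^q$ (a descending ``hump'' in the graph $g_{\infw{x}}$) into $0^q1^p$, so the weight of any prefix only ever decreases or stays the same: $g_{T(\infw{x})}(i) \le g_{\infw{x}}(i)$ for all $i$, while $g_{T(\infw{x})}(i) \ge g_{\infw{x}}(i) - 1$ since at most one ``1'' can be pushed past position $i$ from the left (a factor $10$ straddling position $i$). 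This bounded-discrepancy picture is exactly the phenomenon behind the $C$-squeezing lemma (\autoref{lem:uppersqueezing}), and I expect the cleanest route is to mimic that proof.

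Concretely, I would fix a factor $v = b_{i}\cdots b_{i+n-1}$ of $F(\infw{x})$ (so $i \ge 2$) and compare its weight to the weights of length-$n$ factors of $\infw{x}$. A switch (a rewriting of $10\mapsto 01$) internal to positions $i+1,\dots,i+n-1$ does not change $|b_i\cdots b_{i+n-1}|_1$, so the only way the Parikh vector of the window can differ from the corresponding window $a_i\cdots a_{i+n-1}$ of $\infw{x}$ is via a switch at the left endpoint (involving $a_{i-1}a_i$) or the right endpoint (involving $a_{i+n-1}a_{i+n}$). If there is no left-endpoint switch affecting $b_i$ and no right-endpoint switch affecting $b_{i+n-1}$, then $v \sim a_i\cdots a_{i+n-1}$ and we are done. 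Otherwise one does a short case analysis: a switch at the right end can only \emph{lighten} the window relative to $\infw{x}$ (a $1$ at position $i+n-1$ becomes a $0$), so $|v|_1 \le \max\{|u|_1 : u \in \lang[n]{\infw{x}}\}$; and a switch at the left end (a $0$ at position $i$ becomes a $1$) can only make it heavier by exactly one, but then one checks — just as in the three cases of the proof of \autoref{lem:uppersqueezing} — that shifting the comparison window in $\infw{x}$ by one position (taking $a_{i-1}\cdots a_{i+n-2}$ or $a_{i+1}\cdots a_{i+n}$, according to the letters adjacent to the right endpoint) recovers an abelian-equivalent factor of $\infw{x}$. For the lower bound in the Corridor Lemma the argument is symmetric: since $g_{T(\infw{x})} \le g_{\infw{x}}$ pointwise, no window of $F(\infw{x})$ can be strictly lighter than every window of $\infw{x}$ of the same length without producing a contradiction with the frequency/recurrence behaviour, and again the endpoint case analysis closes the gap.

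The main obstacle, and the only place real care is needed, is the bookkeeping at the two endpoints of the window together with the extra shift coming from $\sigma$: one must verify that the single-letter discrepancies introduced at the left and right ends of the window cannot \emph{both} push the weight of $v$ strictly outside the interval $[\min_u |u|_1, \max_u |u|_1]$ taken over $u \in \lang[n]{\infw{x}}$. This is handled exactly as in \autoref{lem:uppersqueezing}: a switch at the left end forces (via the frequency, since the replaced block sat above the implicit boundary line) that the corresponding situation at the right end cannot also be a ``heaviest possible'' configuration, so one of the shifted windows $a_{i-1}\cdots a_{i+n-2}$ or $a_{i+1}\cdots a_{i+n}$ is abelian equivalent to $v$ and lies in $\lang[n]{\infw{x}}$. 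Once this case analysis is in place, the Corridor Lemma (\autoref{lem:corridorLemma}) immediately yields $F(\infw{x}) \in \abclsr{\infw{x}}$.
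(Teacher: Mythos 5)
Your proposal is correct and follows essentially the same route as the paper's proof: compare the window of $F(\infw{x})$ with the aligned window of $\infw{x}$, observe that only a switch straddling an endpoint can change the weight (by $\pm 1$), and in the one problematic case (a switch at the left end only) shift the comparison window to $a_{i-1}\cdots a_{i+n-2}$ or $a_{i+1}\cdots a_{i+n}$ according to whether the letter at the right end is $0$ or is followed by a $1$. Two small inaccuracies do not affect the argument: $T$ does not rewrite a block $1^p0^q$ into $0^q1^p$ (it is a single parallel pass, so $1^p0^q \mapsto 1^{p-1}010^{q-1}$ --- only the boundary pair is swapped, which is all your discrepancy bound $g_{\infw{x}}(i)-1 \le g_{T(\infw{x})}(i) \le g_{\infw{x}}(i)$ actually uses), and no frequency argument is needed, since unlike the $C$-squeezing of \autoref{lem:uppersqueezing} the map $T$ has no boundary line and the endpoint case analysis is purely local.
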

\begin{proof}
Assume the contrary, that $F(\infw x)$ has a factor which
is either heavier or lighter than all factors of $\infw x$. Write
$\infw x = a_0a_1a_2\cdots$ and
$T(\infw x) = b_0b_1b_2\cdots$, where $a_i,b_i\in\{0,1\}$.
Assume first that the factor $u = b_i\cdots b_j$ of $T(\infw{x})$, with $i\geq 1$ (recall
that $F(\infw{x})$ is obtained by removing the first letter of $T(\infw{x})$), is heavier
than all factors of $\infw x$ of the same length. We consider how $u$ is generated from
$\infw{x}$ under $T$:

\begin{center}
\newlength{\Csep}
\setlength{\Csep}{16pt}
\begin{tabular}{r *{8}{C{\Csep}|} C{\Csep}}
\cline{2-10}
   $\infw{x}$:  & \cdots & a_{i-1} & a_{i}  & a_{i+1} & \cdots & a_{j-1} & a_j & a_{j+1} & \cdots \\ \cline{2-10} \\\cline{2-10}
   $T(\infw{x})$: & \cdots & * & b_{i} & b_{i+1} & \cdots & b_{j-1} & b_j & * & \cdots \\\cline{2-10}
\end{tabular}
\end{center}
Consider the factor $v = a_i\cdots a_j$ of $\infw x$. Since $u$ is heavier than $v$, we
must have $a_{i-1}a_i = 10$ and $a_ja_{j+1}\neq 10$ by the definition of $T$.
Moreover, we see that $|u|_1 \leq 1+|v|_1$.
If $a_j=0$, then $|a_{i-1} \cdots a_{j-1}|_1 = 1+|v|_1$, so $u$ is not heavier than this
factor of $\infw{x}$. Now if $a_j = 1$, then necessarily $a_{j+1}=1$ and thus $|a_{i+1} \cdots a_{j+1}|_1 = 1+|v|_1$, and again, $u$ is not heavier than this factor of
$\infw{x}$. In either case, $u$ has an equal weight and length corresponding factor in
$\infw x$, a contradiction.

The case of $b_i\cdots b_j$ being lighter than all other factors is symmetric.
One simply notes that in this case necessarily $a_ja_{j+1} = 10$ and $a_{i-1}a_i \neq 10$. 
\end{proof}

The proof above is similar to the proof of \autoref{lem:uppersqueezing}, but the shift operation
in the definition of $F$ cannot be removed.
\begin{remark}
We remark that $T(\infw x)$ is not necessarily an element of $\abclsr{\infw x}$.
For example, consider the Sturmian word $0\infw f = 0010\dots$, where $\infw f$ is the
Fibonacci word. Observe that $T(0\infw f)$ begins with
$0001$, hence $T(0\infw f) \notin \soc{0\infw{f}} = \abclsr{0\infw f}$.
\end{remark}

Clearly $F(\infw x)$ is uniformly recurrent if $\infw x$ is. Further, if $\infw x$ has
uniform letter frequencies, then $F(\infw x)$ has the same letter frequencies as $\infw x$
by \autoref{lem:uniformFrequencies}. Moreover, if $\infw x$ is non-balanced,
then so is $F(\infw x)$. This can be shown by using similar arguments
as in the above proof.

We now consider the operation of iterating $F$ on a word. For this, we need some
terminology.
\begin{definition}
A word $\infw y$ is a \emph{preimage of order $n$} of a word $\infw x$, if
$F^n(\infw y) = \infw x$. We say that $\infw{x}$ \emph{has a preimage of order $n$}, if
such a $\infw{y}$ exists.
\end{definition}

\begin{lemma}\label{lem:preimageOfOrderN}
Let $\infw x$ be a binary word containing the factor $11(01)^n00$ for some
$n\geq 0$. Then $\infw x$ has no preimage of order $n+1$.
\end{lemma}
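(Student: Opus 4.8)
The plan is to use the local (cellular-automaton) rule of $T$ together with a right-to-left reconstruction of $F$-preimages. Write $\infw{w}=c_0c_1\cdots$ and $T(\infw{w})=d_0d_1\cdots$; then for $i\ge 1$ one has $d_i=1$ iff $c_ic_{i+1}=11$ or $c_{i-1}c_i=10$, and hence $d_i=0$ iff ($c_i=1$ and $c_{i+1}=0$) or ($c_i=0$ and $c_{i-1}=0$). Since $F=\sigma\circ T$, any occurrence of a factor in $F(\infw{w})$ corresponds to an occurrence of that factor in $T(\infw{w})$ at a position $j\ge 1$; I will use this throughout.

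I first dispose of $n=0$ by showing that $F(\infw{w})$ never contains $1100$, for any binary $\infw{w}$. Otherwise $d_jd_{j+1}d_{j+2}d_{j+3}=1100$ for some $j\ge1$. Reading the trailing $00$: if $c_{j+2}=1$, then $d_{j+2}=0$ forces $c_{j+3}=0$, so $c_{j+2}c_{j+3}=10$ and $d_{j+3}=1$, a contradiction; hence $c_{j+2}=0$, and $d_{j+2}=0$ forces $c_{j+1}=0$. Then $d_{j+1}=1$ with $c_{j+1}=0$ forces $c_j=1$, but $d_j=1$ with $c_jc_{j+1}=10$ forces $c_{j-1}c_j=10$, contradicting $c_j=1$. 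In particular a word containing $1100$ has no preimage of order $1$.

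Next comes the reduction step: \emph{if $F(\infw{w})$ contains $11(01)^n00$ with $n\ge1$, then $\infw{w}$ contains $11(01)^m00$ for some $0\le m\le n-1$.} So $T(\infw{w})$ contains $11(01)^n00$ at positions $j,\dots,j+2n+3$ with $j\ge1$. As above the trailing $00$ forces $c_{j+2n+1}=c_{j+2n+2}=0$, and then $d_{j+2n+1}=1$ forces $c_{j+2n}=1$. I now scan the ``branching'' $1$'s of $(01)^n$, at positions $j+2n-1,j+2n-3,\dots,j+3$, from right to left; when scanning position $q$ the right neighbour $c_{q+1}$ is already known to be $1$. If $c_q=1$ we stop: $c_qc_{q+1}=11$ is followed by the suffix of $\infw{w}$ reconstructed so far, which is exactly $(01)^k00$ with $k$ the number of these $1$'s already passed, so $\infw{w}$ contains $11(01)^k00$. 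If $c_q=0$, then $d_q=1$ together with $c_qc_{q+1}=01$ forces $c_{q-1}=1$, and we continue one ``$01$'' further left. If we never stop inside $(01)^n$ we reach the second $1$ of the initial $11$, at position $j+1$: here $c_{j+1}=0$ would force $c_j=1$ and then $d_j=1$ would force $c_{j-1}c_j=10$ (using $j\ge1$), contradicting $c_j=1$; so $c_{j+1}=1$ and $\infw{w}$ contains $11(01)^{n-1}00$. In every case $m\le n-1$.

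Finally I conclude by strong induction on $n$, the case $n=0$ being the one already treated. For $n\ge1$, assume the statement below $n$ and suppose $\infw{x}$ contains $11(01)^n00$ yet has a preimage $\infw{y}$ of order $n+1$. Set $\infw{z}=F^n(\infw{y})$, so $F(\infw{z})=\infw{x}$. By the reduction step $\infw{z}$ contains $11(01)^m00$ with $m\le n-1$, so by induction $\infw{z}$ has no preimage of order $m+1$; but then it has no preimage of order $n$ either, since a preimage of order $n$ of $\infw{z}$ would, after $n-m-1$ further applications of $F$, be a preimage of order $m+1$. This contradicts $F^n(\infw{y})=\infw{z}$. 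The only real work is the reconstruction in the reduction step, and the crux is that the scan must terminate before passing the initial $11$ --- which is exactly where the shift in $F=\sigma\circ T$, forcing $j\ge1$, is used.
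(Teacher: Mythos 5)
Your proof is correct and follows essentially the same route as the paper's: use the local rule of $T$ (together with the shift in $F=\sigma\circ T$, which guarantees a left neighbour exists) to force the preimage to contain $11(01)^m00$ with $m\leq n-1$, then conclude by induction on $n$. Your right-to-left reconstruction merely makes explicit the step the paper compresses into ``we deduce from the above that $\infw{y}$ contains the factor $11(01)^m00$ for some $m\leq n-1$''.
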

\begin{proof}
If $\infw x$ does not have a preimage under $F$, then there is nothing to prove.
Assume it has a preimage $\infw y$ under $F$. We thus have $T(\infw y) = \infw{x}'$,
where $\infw{x} = \sigma(\infw{x}')$. Consider the position in $\infw y$ corresponding to
where $11(01)^n00$ occurs in $\infw{x}'$ (note that $\infw{x}'$ also contains $11(01)^n00$):
\begin{center}
\begin{tabular}{r C{15pt}|C{6pt}|C{6pt}|C{6pt}|C{15pt}|C{6pt}|C{6pt}|C{6pt}|C{15pt}}\cline{2-10}
   $\infw{y}$: &  \cdots & * & a & b  & \cdots &  c & d & * & \cdots \\\cline{2-10}
   \\\cline{2-10}
 $\infw{x}'$: & \cdots & 1 & 1 & 0 &  \cdots &  1 & 0 & 0 & \cdots\\\cline{2-10}
\end{tabular}
\end{center}
The only option is that $ab=11$ and $cd = 00$: If $a=0$,
then either $a$ stays in the same position or is moved one step to the left
depending on which letter precedes $a$ in $\infw y$.
This is impossible, since $\infw{x}'$ has $1$ in both positions. Furthermore 
if $b=0$, then the letter $a=1$ would be shifted by $T$ to the right by one
position, which is also not possible, as $\infw{x}'$ has $0$ in that position.
Similar arguments show that $cd=00$.

Observe that the arguments showing $ab=11$ and $cd=00$ are independent of each other.
Now if $n=0$, the above observation poses a contradiction: we should have $1=a=c=0$.
Thus $\infw x$ has no preimage under $F$.
For $n\geq 1$, we deduce from the
above that $\infw y$ contains the factor $11(01)^{m}00$ for some $m \leq n-1$. By
induction, $\infw y$ has no preimage of order $m+1$, so that $\infw x$ has no preimage of order $m+2 \leq n+1$, as was to be shown.
\end{proof}

The dynamics of the mapping $F$ will be of interest to us in our later considerations.
The following proposition is the main result of this section.

\begin{proposition}\label{prop:finitelyManyIterationsAll1sIsolated}
Let $\infw x$ be a binary word with $\freq{1} < 1/2$. Then there exists
an integer $n\geq 0$ such that all $1$s are isolated in $F^n(\infw x)$,
that is, $11 \notin \lang{F^n(\infw x)}$.
\end{proposition}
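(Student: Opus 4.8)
The plan is to track how the ``mass'' of the word---measured by the deviation of the graph $g_{\infw{x}}$ from the line $y = \alpha x$, where $\alpha = \freq{1} < 1/2$---behaves under $F$, and to argue that each application of $F$ strictly decreases the number of occurrences of the factor $11$ in a controlled way until none remain. First I would record the combinatorial effect of $T$ on an occurrence of $11$: if $ab = 11$ sits in $\infw{x}$ at positions $k, k+1$, then under $T$ the letter at position $k$ can only be moved left if it is preceded by a $0$ that is itself in a $10$ pattern; more useful is the observation (already implicit in \autoref{lem:TrafficPreservesAbelianSubshift} and \autoref{lem:preimageOfOrderN}) that $T$ moves $1$s leftward past $0$s, so blocks of $1$s tend to merge with $1$s on their left and get separated on their right. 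The key quantitative handle is: a maximal block $1^m$ with $m \geq 2$ in $\infw{x}$, when it is preceded by a sufficiently long run of $0$s, loses its rightmost $1$ to the $0$-run on its right under $T$ (that $1$ becomes isolated), so the block shrinks to $1^{m-1}$, unless it merges with another block of $1$s coming from its left.

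Next I would set up the right potential function. Since $\alpha < 1/2$, the word $\infw{x}$ cannot be too heavy: by the Corridor Lemma / continuity (\autoref{lem:continuity}) and the frequency assumption, for every $n$ there is a factor of length $n$ of weight at most $\lfloor \alpha n \rfloor < n/2$, so in particular $\infw{x}$ contains arbitrarily long factors that are majority-$0$. Consider $M = \limsup \{ C : g_{\infw{x}}(x) \geq \alpha x + C\}$, the ``upper width'' of $\infw{x}$; this is finite if $\infw{x}$ is $C$-balanced and $+\infty$ otherwise, so I would split into two cases. In the $C$-balanced case, I claim the quantity $W(\infw{x}) := \sup_x(g_{\infw{x}}(x) - \alpha x) - \inf_x(g_{\infw{x}}(x) - \alpha x)$ (the width, as in \autoref{sec:C-bal}) is non-increasing under $F$, and strictly decreases past integer thresholds whenever $11$ still occurs---because the presence of $11$ together with $\alpha < 1/2$ forces the graph to have a local ``bump'' of height $\geq 2$ above a nearby stretch, and $T$ flattens such bumps just as the squeezing operation does. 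Once $W(\infw{x}) < 2$, one cannot have $11$ as a factor: a $11$ at positions $k,k+1$ gives $g(k+2) - g(k) = 2$, while the factor of the same length $2$ of minimal weight has weight $0$ (since $\alpha < 1/2$ forces $00 \in \lang{\infw{x}}$), so the width would be at least $2$. Hence finitely many iterations suffice.

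For the non-$C$-balanced case, iterating $F$ preserves uniform recurrence and the letter frequency, and---as remarked after \autoref{lem:TrafficPreservesAbelianSubshift}---preserves non-balancedness, so this case does not reduce to the previous one directly; instead I would argue more locally. Fix a long factor $v = 0^N u 0^N$ occurring in $\infw{x}$ with $N$ large (possible since $00$-heavy factors of every length occur, by the frequency bound, and $\infw{x}$ is uniformly recurrent so such a factor recurs with bounded gaps, say gap $\leq G$). Inside such a window, the leftmost and rightmost $0^N$ blocks act as ``sinks/absorbers'': under each application of $T$, every maximal $1$-block strictly inside loses its rightmost $1$ into the $0$-run to its right, provided no $1$s enter from the left---and the leading $0^N$ prevents inflow once $N$ is large relative to the number of $1$s in $u$. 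Thus after at most $|u|_1 \leq N$ applications of $F$, the portion of the word corresponding to this window has all $1$s isolated. Since every position of $\infw{x}$ eventually lies inside such a window (gaps bounded by $G$), and the structure is shift-stable under $F$, a single uniform bound $n$ works for all of $\infw{x}$; here I would use uniform recurrence to turn the local statement into the global one, exactly as in the proofs of \autoref{claim4} and \autoref{claim5}.

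The main obstacle I anticipate is the bookkeeping in the non-balanced case: ensuring that $1$s flowing in from the left of a chosen window do not repopulate blocks that have already been emptied, i.e., genuinely nailing down that the ``absorbing'' $0^N$-prefix blocks all relevant leftward inflow for the whole duration of the $\leq N$ iterations. This requires choosing $N$ large enough compared to both the local weight and the recurrence gap $G$, and carefully tracking that $F$ (which includes the shift $\sigma$) does not destroy the absorbing prefix faster than it cleans up the $11$s; this is a finitary ``traffic jam dissolves'' argument for Rule $184$, and the paper's framing of $T$ as the Traffic cellular automaton suggests this intuition is the intended one.
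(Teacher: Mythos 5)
There is a genuine gap, in fact one in each of your two cases, and the common root is that you never exhibit a potential function that provably decreases. In your $C$-balanced branch, the assertion that the width $W$ is non-increasing under $F$ and ``strictly decreases past integer thresholds'' whenever $11$ occurs is exactly the point that needs proof, and it is doubtful as stated: $T$ lowers each local maximum of $h(x)=g_{\infw{x}}(x)-\alpha x$ by $1$, but it can also lower values near the infimum (a position $i$ with $a_ia_{i+1}=10$ may sit only $1-\alpha$ above $\inf h$, and $T$ drops $h(i)$ by $1$ there), so $\sup h$ and $\inf h$ can both fall and the width need not shrink. Your termination threshold is also miscalibrated: the simultaneous presence of $11$ and $00$ forces width at least $2-2\alpha$, not $2$, so ``$W<2$ implies no $11$'' fails for $\alpha$ close to $1/2$. (Incidentally, $T$ moves $1$s \emph{rightward}, not leftward.) In your non-balanced branch you candidly name the obstruction --- inflow of $1$s from the left of the chosen window over the course of the iterations --- but you do not resolve it; and that branch also leans on uniform recurrence, which is not a hypothesis of the proposition. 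Without a uniform bound on how long each ``jam'' takes to dissolve, your argument yields only that each individual occurrence of $11$ eventually disappears, not the existence of a single $n$ with $11\notin\lang{F^n(\infw{x})}$.

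The missing idea is the paper's notion of an \emph{exceptional factor}: a factor of $\infw{x}$ containing $11$ whose $1$-frequency exceeds $1/2$, taken of maximal length. Because $\freq{1}<1/2$, Fekete's lemma gives that all sufficiently long factors have $1$-frequency below $1/2$, so this maximal length $\ell$ is finite --- this is where the hypothesis enters, with no case split on balancedness and no recurrence assumption. The paper then shows (i) the maximal exceptional length does not increase under $F$ (any exceptional factor of $F(\infw{x})$ is traced back to an exceptional factor of $\infw{x}$ at least as long, using that exceptional factors are flanked by $00$), and (ii) it must strictly drop within $\lfloor \ell/2\rfloor$ iterations, because an exceptional factor of length $\ell$ surviving that long would contain $11(01)^k00$ with $2(k+1)\le\ell$, and \autoref{lem:preimageOfOrderN} forbids such a word from having a preimage of order $k+1$. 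Iterating until the exceptional length is at most $3$ forces every $11$ to be followed by $000$, and one more application of $F$ isolates all $1$s. If you want to salvage your approach, you should replace the width by this (or a comparably robust) quantity and prove its monotonicity; the traffic-jam intuition is right, but rule $184$ arguments need exactly this kind of finite, provably decreasing invariant to yield a uniform $n$.
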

\begin{proof}
If all $1$s are isolated in $\infw x$ we may choose $n=0$. Assume that
$11$ occurs in $\infw x$. Due to our assumption $\freq{1} < 1/2$, there must exist a
factor $v$ of maximal length for which $\freq[v]{1} > 1/2$ and, further, in which $11$
occurs. We call such a factor of $\infw{x}$ \emph{exceptional}. Note that an exceptional factor
has length at least $3$, since $110$ must occur in $\infw{x}$ under the assumptions. Now
any occurrence of an exceptional factor $v$ (occurring after the prefix of
length $2$) must be preceded and followed by $00$ in $\infw{x}$.
Otherwise $\infw{x}$ contains a factor of length $|v|+2$ with frequency at least
$\frac{|v|_1+1}{|v|+2}>1/2$ and which contains $11$.

We partition the rest of the proof into a couple of claims.
\begin{claim}
For any exceptional factor $u$ of $F(\infw{x})$, there exists an exceptional factor $v$ of $\infw{x}$ such that $|u| \leq |v|$.
\end{claim}
\begin{proof}
Write $\infw{x} = a_0a_1\cdots$ and $T(\infw{x}) = b_0b_1\cdots$.
Let $u = u_1 11  u_2$ be an exceptional factor of $F(\infw{x})$, where
$u_1,u_2\in \{0,1\}^*$. Actually, $u_2\neq \eps$:
notice again that $u$ is followed by $00$ in $F(\infw{x})$ so $u$ cannot end with $11$, as
otherwise $F(\infw{x})$ would contain the factor $1100$. \autoref{lem:preimageOfOrderN} would then imply that $F(\infw{x})$ does not have a preimage, which is absurd.

Let us depict an occurrence of
$u = b_i\cdots b_j$ in $T(\infw{x})$, with $i\geq 1$.

\begin{center}
\begin{tabular}{r C{15pt} *{2}{|C{16pt}} | C{15pt} *{3}{|C{6pt}} |C{15pt} *{3}{|C{16pt}} |C{15pt}}
\cline{2-13}
   $\infw{x}$: &  \cdots & a_{i-1} &  a_i  & \cdots & * & a  &  b & \cdots & a_j & a_{j+1} & * &\cdots
   \\\cline{2-13}
   \\\cline{2-13}
 $T(\infw{x})$: & \cdots & * & b_i & \cdots & 1 &  1 & * & \cdots & b_j & 0 & 0 & \cdots
 \\\cline{2-13}
\end{tabular}
\end{center}
Here we allow $F(\infw{x})$ to begin with $u$, so that $T(\infw{x})$ would begin with
$0u$ or $1u$ (this does not matter in the following argument). Observe that this
particular occurrence of $u$ depends only on the factor $a_{i-1}\cdots a_{j+1}$. We therefore have $|u|_1 \leq |a_{i-1}\cdots a_{j+1}|_1$ by the form of $T$.

We now have that $ab=11$ and $a_ja_{j+1}=00$ by the same arguments as used in
\autoref{lem:preimageOfOrderN}. It then follows that
$|u|_1 \leq |a_{i-1}\cdots a_{j-1}|_1$ as we had $a_ja_{j+1}=00$. Now
$a_{i-1}\cdots a_{j-1}$ contains an occurrence of $11$ and the frequency of $1$ is larger
than $1/2$. Hence $\infw{x}$ contains an exceptional factor of length at least $|u|$, as was claimed.
\end{proof}
As a consequence of the above claim, for any $m\geq 0$ and for any exceptional factor $u$ of $F^{m}(\infw{x})$, there exists an exceptional factor $v$ of $\infw{x}$ such that $|u| \leq |v|$.

\begin{claim}
There exists an integer $m$ such that any exceptional factor of $F^{m}(\infw{x})$ is shorter
than any exceptional factor in $\infw{x}$.
\end{claim}
\begin{proof}
Let $v$ be an exceptional factor of $\infw{x}$ and take $m = \left\lfloor|v|/2\right \rfloor$.
Assume, for a contradiction, that $\infw y = F^{m}(\infw x)$ contains an exceptional factor $u$
of length $|v|$. Similar to $v$, all occurrences of $u$ in $\infw y$ are followed by $00$.
We infer that $u00$ contains a factor of the form $11(01)^k00$, where
$2(k+1)\leq |u| = |v|$. By \autoref{lem:preimageOfOrderN}, $\infw y$ has a preimage of the
order at most $k \leq |v|/2-1 < \left\lfloor |v|/2 \right\rfloor=m$, which is a
contradiction.
\end{proof}

The claim above implies that there exists an integer $n\geq 1$ such that an exceptional
factor in $F^{n-1}(\infw x)$ has length at most $3$.
The only such factors are $011$ and $110$. This implies that each occurrence of $11$ is
always followed by $000$ in $F^{n-1}(\infw{x})$. We conclude that, in the word $F^n(\infw x)$, all $1$s are isolated, which was to be proved.
\end{proof}

We shall use the following immediate corollary in our later considerations.
\begin{corollary}\label{cor:AbelianSsNonBalancedContainsIsolated1}
For a non-balanced binary word $\infw{x}$ with $\freq{1} < 1/2$, there exists a non-balanced
word $\infw{x}'$ in $\abclsr{\infw{x}}$ such that $11 \notin \lang{\infw{x}'}$.
\end{corollary}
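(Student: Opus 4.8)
The plan is to combine the two previous results of this subsection. By \autoref{prop:finitelyManyIterationsAll1sIsolated}, since $\infw{x}$ is binary with $\freq{1} < 1/2$, there exists an integer $n \geq 0$ such that $11 \notin \lang{F^n(\infw{x})}$. So the natural candidate is $\infw{x}' := F^n(\infw{x})$. I would first verify that $\infw{x}' \in \abclsr{\infw{x}}$: by \autoref{lem:TrafficPreservesAbelianSubshift} we have $F(\infw{z}) \in \abclsr{\infw{z}}$ for any binary $\infw{z}$, and since $\abclsr{\cdot}$ is monotone with respect to inclusion of abelian languages (that is, $\ablang{\infw{w}} \subseteq \ablang{\infw{z}}$ implies $\abclsr{\infw{w}} \subseteq \abclsr{\infw{z}}$, which is immediate from the definition), an easy induction on $k$ gives $F^k(\infw{x}) \in \abclsr{\infw{x}}$ for all $k$; in particular $\infw{x}' = F^n(\infw{x}) \in \abclsr{\infw{x}}$.

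Second, I would confirm that $\infw{x}'$ is non-balanced. This is exactly the remark made just after the proof of \autoref{lem:TrafficPreservesAbelianSubshift}: if $\infw{z}$ is non-balanced then so is $F(\infw{z})$, shown by arguments analogous to the proof of that lemma (one checks that the unbalanced pair of factors witnessing non-$C$-balancedness in $\infw{z}$ gives rise, via the same case analysis on the boundary letters, to an unbalanced pair in $F(\infw{z})$, and since $F$ preserves letter frequencies by \autoref{lem:uniformFrequencies}, non-balancedness is genuinely preserved rather than merely some fixed degree of imbalance). Iterating $n$ times, $\infw{x}' = F^n(\infw{x})$ is non-balanced. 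Together with $11 \notin \lang{\infw{x}'}$ and $\infw{x}' \in \abclsr{\infw{x}}$, this is precisely the assertion of the corollary.

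There is essentially no main obstacle here: the corollary is a direct packaging of \autoref{prop:finitelyManyIterationsAll1sIsolated}, \autoref{lem:TrafficPreservesAbelianSubshift}, and the frequency/non-balancedness preservation of $F$, all of which are established earlier in the excerpt. The only point that deserves a sentence of care is the preservation of non-balancedness under $F$, since the excerpt states it only in passing; I would spell out that a single application of $F$ cannot turn a non-balanced word into a $C$-balanced one, because $F$ does not decrease the "spread" of weights of equal-length factors in a way that could be uniformly bounded — indeed the proof of \autoref{lem:TrafficPreservesAbelianSubshift} shows each abelian factor of $F(\infw{z})$ already appears in $\infw{z}$, so $\ablang{F(\infw{z})} \subseteq \ablang{\infw{z}}$; to get non-balancedness of $F(\infw{z})$ one instead uses the complementary fact that the heavy and light factors witnessing unbalancedness survive, which follows from the boundary-letter analysis in that proof applied in reverse.
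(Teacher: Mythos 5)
Your proposal is correct and follows exactly the route the paper intends: take $\infw{x}' = F^n(\infw{x})$ with $n$ from \autoref{prop:finitelyManyIterationsAll1sIsolated}, get membership in $\abclsr{\infw{x}}$ by iterating \autoref{lem:TrafficPreservesAbelianSubshift}, and preserve non-balancedness via the boundary-letter analysis (each application of $T$ changes the weight of a fixed-position factor by at most one, so the weight spread of equal-length factors drops by at most a bounded amount per iteration, which cannot destroy non-balancedness). The only cosmetic quibble is that invoking \autoref{lem:uniformFrequencies} for frequency preservation is irrelevant to the non-balancedness argument; the rest is exactly the paper's ``immediate'' deduction spelled out.
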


\subsection{The structure of binary words in terms of standard pairs}\label{subsec:Standard}
In this subsection we recall structural results related to standard words and central words from
\cite[\S~2.2.1]{MR1905123}. We then prove a couple of related technical lemmas about binary words that we use in the sequel for the proof of Theorem \ref{thm:binary} in the case of non-balanced words. 

The reversal $x^R$ of a finite word $x = a_0\cdots a_n$
is $x^R = a_n \cdots a_0$. If $x^R = x$, then $x$ is called a \emph{palindrome}.
It is known that a word $w$ is central if and only if $w$ is a
(possibly empty) power of a letter, or is a palindrome which can be written in the form
$p10q = q01p$ for some palindromes $p,q$. Moreover, this factorization is unique. Further, any
palindrome prefix or suffix of a central word is central.

Let us recall an important structural result on central words. We say that the word
$w = a_1a_2 \cdots a_n$, with $a_i \in \alphabet$, has \emph{period} $k$ if $a_i = a_{i+k}$
for $i=1,\ldots,n-k$. Notice that $k = n$ is allowed in this definition.

\begin{theorem}[{\cite[Thm~2.2.11]{MR1905123}}]\label{thm:periodsOfCentral}
A word is central if and only if it has two periods $k$ and $\ell$ such that
$\gcd(k,\ell) = 1$ and $|w|=k+\ell-2$. Moreover, if $w\notin 0^*\cup 1^*$ and
$w = p10q$ with $p$ and $q$ palindromes, then $\{k,\ell\} = \{|p|+2,|q|+2\}$
and the pair $\{k,\ell\}$ is unique.
\end{theorem}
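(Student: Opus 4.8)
\textbf{Proof sketch for Theorem~\ref{thm:periodsOfCentral}.}

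The plan is to prove both directions of the equivalence, handling the degenerate case $w \in 0^* \cup 1^*$ separately, and then deriving the uniqueness statement. For the degenerate case, a word $0^n$ (resp. $1^n$) has periods $1$ and $n+1$ (the latter being the trivial period $= |w|+1 > |w|$, but we should instead note it has periods $1$ and $k$ for every $k$; picking $k = 1$, $\ell = n+1$ gives $\gcd = 1$ and $k + \ell - 2 = n = |w|$), and conversely such words are exactly the letter-powers, which are central by definition. So from now on assume $w \notin 0^* \cup 1^*$.

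For the forward direction, suppose $w$ is central and not a letter-power. By the structural result recalled just before the theorem, $w$ is a palindrome admitting a factorization $w = p10q = q01p$ with $p,q$ palindromes. I would first show $w$ has period $|p| + 2$: reading $w = q01p$ and $w = p10q$ and using that $p,q$ are palindromes, one checks $a_i = a_{i + |p| + 2}$ for all valid $i$ by comparing the two factorizations letter by letter (the overlap $p10q = q01p$ forces the claimed periodicity via the Fine--Wilf-type alignment inherent in the two ways of writing $w$). Symmetrically $w$ has period $|q| + 2$. Then $|w| = |p| + |q| + 2 = (|p|+2) + (|q|+2) - 2$, so with $k = |p|+2$, $\ell = |q|+2$ we get $|w| = k + \ell - 2$; and $\gcd(k,\ell) = 1$ follows because a common divisor $d$ of $k$ and $\ell$ would be a period of $w$ with $d \le \max(k,\ell)$, and two periods summing to $|w|+2$ that are both multiples of $d$ would, by Fine--Wilf, force $w$ to have period $d \mid \gcd$, ultimately making $w$ a letter-power unless $d = 1$ — this is exactly the place where the hypothesis $w \notin 0^* \cup 1^*$ is used.

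For the converse, suppose $w$ has coprime periods $k, \ell$ with $|w| = k + \ell - 2$. The standard tool here is the theorem of Fine and Wilf together with the classical characterization that central words are precisely the maximal words having two given coprime periods; concretely, one argues that among all binary words of length $k + \ell - 2$ with periods $k$ and $\ell$, there is (up to exchanging $0$ and $1$) a unique one — this follows since the period conditions link positions into $\gcd(k,\ell) = 1$ residue classes in a constrained way once the length is exactly $k + \ell - 2$ — and one exhibits a central word of that length with those periods (e.g. built from the appropriate standard pair / continued-fraction data), forcing $w$ to equal it and hence to be central. The cleanest route is to invoke the already-available machinery: the standard word associated to the partial quotients determined by $k$ and $\ell$ has exactly these periods and length $k + \ell - 2$, and uniqueness of the word with two coprime periods of that total length pins $w$ down.

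Finally, for uniqueness of $\{k,\ell\}$: given the factorization $w = p10q$ with $p,q$ palindromes, the forward direction already shows $\{|p|+2, |q|+2\}$ is a valid pair. If $\{k',\ell'\}$ were another valid pair, then both pairs sum to $|w| + 2$, so determining one element determines the other; one shows the smaller element of any valid pair equals the smallest period of $w$ exceeding $1$, which is an intrinsic quantity, hence the pair is unique — and this also matches $\{|p|+2,|q|+2\}$ by the uniqueness of the palindromic factorization $p10q$ recalled earlier. The main obstacle I expect is the converse direction, specifically proving the uniqueness of a binary word of length $k+\ell-2$ with two prescribed coprime periods and identifying it as central; the forward direction and uniqueness of $\{k,\ell\}$ are comparatively mechanical given the palindromic factorization already in hand.
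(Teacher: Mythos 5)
First, a point of comparison: the paper does not prove this statement at all --- it is quoted, with a citation, from Lothaire [Thm.~2.2.11] (the de~Luca--Mignosi characterization of central words). So there is no in-paper argument to measure your sketch against; it has to stand on its own as a proof of a classical result. Your overall route (the palindromic factorization $p10q=q01p$ for the forward direction, Fine--Wilf and the period-forcing graph for the converse, and ``$\min\{k,\ell\}$ is the minimal period'' for uniqueness) is indeed the standard one, and the derivation of the periods $|p|+2$, $|q|+2$ and of the uniqueness of $\{k,\ell\}$ is sound. Note, however, that you must not lean on \autoref{lem:centralStandardPeriods} for the periods, since the paper derives that lemma \emph{from} this theorem.

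Two steps are genuinely broken as written. (1) In the forward direction you deduce from $\gcd(k,\ell)=d\ge 2$ via Fine--Wilf that $w$ has period $d$, and then assert this ``ultimately'' makes $w$ a letter power. It does not: a word can have a period $d\ge 2$ without being a letter power ($010010$ has period $3$), and nothing in the sketch bridges that gap. A clean fix avoids Fine--Wilf here entirely: $w01=xy$ for a standard pair with $|x|=|p|+2$, $|y|=|q|+2$, and $\gcd(|x|,|y|)=1$ for every standard pair by a one-line induction on the generators $\Gamma,\Delta$, which send $(|u|,|v|)$ to $(|u|,|u|+|v|)$ or $(|u|+|v|,|v|)$. (2) In the converse you claim a word of length $k+\ell-2$ with periods $k,\ell$ is unique up to exchanging $0$ and $1$. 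This is false: the graph joining $i$ to $i+k$ and to $i+\ell$ has exactly \emph{two} connected components at this length, so there are four such words --- the two constant ones and a complementary pair $c$, $E(c)$. The theorem survives because all four are central, but your argument does not ``pin $w$ down''; the real content, namely that the non-constant solution is central, is only gestured at via ``continued-fraction data'' and still needs an actual induction (e.g.\ on $k+\ell$ along the Euclidean algorithm, mirroring the recursive structure of standard pairs). That missing induction is the hard core of the converse, which you correctly identify as the main obstacle but do not supply.
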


We remark the following straightforward consequence of this result.
\begin{lemma}\label{lem:centralStandardPeriods}
Let $w$ be a central word with $w \notin 0^*\cup 1^*$. Write $w01 = xy$ for some standard pair $(x,y)$. Then, writing $w = p10q$ for unique central words $p$ and $q$, we have $x = p10$ and $y = q01$. Furthermore $w x^R = xw$ and $w y^R = yw$.
\end{lemma}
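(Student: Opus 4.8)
The plan is to combine the structural description of central words (the $p10q = q01p$ factorization) with the uniqueness of the standard pair $(x,y)$ producing $w01 = xy$, and then to use the period/palindrome machinery of Theorem \ref{thm:periodsOfCentral} to pin down $x$ and $y$ exactly. First I would recall that since $w \notin 0^*\cup 1^*$, Theorem \ref{thm:periodsOfCentral} (together with the stated characterization of central words) gives the unique factorization $w = p10q$ with $p,q$ palindromes, which are themselves central (being palindrome prefix/suffix of a central word). The candidate standard pair is then $(x,y) = (p10, q01)$, and I must check two things: that $xy = p10q01 = w01$, and that $(p10,q01)$ is genuinely a standard pair. The first is immediate from $w = p10q = q01p$, since $xy = (p10)(q01) = w01$. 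For the second, I would invoke the stated fact that a central word $w$ admits a \emph{unique} standard pair $(x,y)$ with $w01 = xy$ (from \cite[Prop.~2.2.1]{MR1905123}); so it suffices to exhibit \emph{one} standard pair giving $w01$, and then conclude $x = p10$, $y = q01$ by uniqueness — provided I can verify $(p10, q01)$ is standard.

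The main obstacle is precisely verifying that $(p10, q01)$ is a standard pair, i.e.\ that it is obtained from $(0,1)$ by a sequence of the operators $\Gamma,\Delta$. I expect to handle this by induction on $|w|$. In the base case $w = \eps$ we have $p = q = \eps$ and $(p10,q01) = (10,01)$; this is $\Delta(0,1) = (10,1)$ followed by... — more carefully, $(0,1) \xrightarrow{\Gamma}(0,01)$ and $(0,1)\xrightarrow{\Delta}(10,1)$, and from the characterization every standard pair $(u,v)$ with $v$ ending in $01$ satisfies $uv = w01$ for $w$ central; tracing the construction of the shortest nontrivial central words $w = 0^n$ or $w=1^n$ handles small cases directly. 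For the inductive step, I would use the fact that stripping the appropriate $\Gamma$ or $\Delta$ from a standard pair corresponds to removing a palindromic "layer'' from the central word — concretely, if $|p| < |q|$ then $q = p10p'$ with $p'$ a shorter central word and one reduces via $\Gamma$-type operations, and symmetrically if $|q| < |p|$ one reduces via $\Delta$; the case $|p| = |q|$ forces $p = q = \eps$ by $\gcd$-coprimality of the periods $|p|+2, |q|+2$ in Theorem \ref{thm:periodsOfCentral}. This bookkeeping is the one genuinely technical part, but it is standard and could alternatively be cited wholesale from \cite[\S2.2.1]{MR1905123} since the correspondence between the $p10q$-factorization and the standard-pair decomposition is classical.

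Finally, for the identities $w x^R = x w$ and $w y^R = y w$: with $x = p10$ we have $x^R = 01p^R = 01p$ since $p$ is a palindrome, so $w x^R = (p10q)(01p)$, while $x w = (p10)(p10q)$; using $w = p10q = q01p$ I would rewrite $w x^R = p10q01p = p10(w) = (p10)w = xw$, where the middle equality is exactly $q01p = w$. The identity $w y^R = yw$ is symmetric: $y^R = (q01)^R = 10q^R = 10q$, so $y w = (q01)(q01p) = q01(q01p) = q01 w$ and $w y^R = (q01p)(10q) = (q01)(p10q) = (q01)w$, again using both forms of the $p10q = q01p$ factorization of $w$. These are short direct computations once the forms of $x,y$ are established, so they present no real difficulty; the whole weight of the lemma rests on the standard-pair verification in the inductive step.
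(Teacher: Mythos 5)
Your proposal reaches the right conclusion but runs the argument in the opposite direction from the paper, and the direction you choose concentrates all the difficulty in the one step you do not actually prove. You construct the candidate pair $(p10,q01)$, verify $(p10)(q01)=w01$, and then appeal to uniqueness of the standard pair to identify $x=p10$ and $y=q01$ --- which is valid \emph{only if} $(p10,q01)$ is indeed a standard pair. The paper avoids this entirely by starting from the given pair $(x,y)$: since $w\notin 0^*\cup 1^*$, neither component can be a letter (the only standard pairs with a letter component are $(0,0^n1)$ and $(1^n0,1)$, which would force $w\in 0^*\cup 1^*$), so $x$ ends in $10$ and $y$ ends in $01$; by the definition of central words, a standard word ending in $10$ (resp.\ $01$) has the form $s10$ (resp.\ $t01$) with $s$ (resp.\ $t$) central, hence a palindrome; and then $w=s10t$ together with uniqueness of the palindromic factorization $w=p10q$ gives $s=p$, $t=q$. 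In other words, the paper deduces that $(p10,q01)$ is standard as a corollary rather than proving it as a prerequisite. Your route is salvageable by citing the classical correspondence from \cite[\S 2.2.1]{MR1905123} wholesale, as you suggest; but the self-contained induction you sketch is not sound as written. The base case you name, $w=\eps$ with pair $(10,01)$, is excluded by the hypothesis $w\notin 0^*\cup 1^*$, and in any case $(10,01)$ is \emph{not} a standard pair: for any standard pair $(u,v)$ other than $(0,1)$ both components begin with the same letter (equivalently, $uv$ and $vu$ differ only in their last two letters), whereas $10$ and $01$ do not. The induction would actually have to bottom out at the pairs $(0,0^n1)$ and $(1^n0,1)$, reached when $p$ or $q$ degenerates to a power of a letter, and the reduction step (that $|q|>|p|$ forces $q=p10q'$ with $q'$ again a palindrome) needs a genuine argument rather than ``bookkeeping.'' Your computation of the closing identities $wx^R=xw$ and $wy^R=yw$ from $x^R=01p$, $y^R=10q$ and $w=p10q=q01p$ is exactly the paper's.
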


\begin{proof}
Observe that $x \neq 0$ as $w$ would then be a power of a letter. Similarly $y\neq 1$. Hence
$x = s10$, $y = t01$, and $w = s10t$ for some central words $s$ and $t$. Since this factorization is unique, we have $s = p$ and $t=q$. Further, by the above theorem,
$w$ has periods $|x|$ and $|y|$. For the last claim, we observe $xw = p10q01p = w x^R$, and
$yw = q01p10q = w y^R$.
\end{proof}

We need the following two technical lemmas to argue about infinite words having distinct sets
of factors if they are products of distinct standard pairs. 
 These facts might be known by some experts in Sturmian words,
but we were unable to find references for them, so we give proofs for the sake of completeness.
In what follows, a factor $w$ of an infinite word $\infw{x}$ is called \emph{right special},
if $w0,w1\in \lang{\infw{x}}$. Similarly, $w$ is \emph{left special} if $0w$,
$1w \in \lang{\infw{x}}$. Finally, $w$ is called \emph{bispecial}, if it is both right
special and left special. Also, a set $X$ of binary words is called balanced if $u,v \in X$
with $|u| = |v|$ implies that $||u|_1 - |v|_1|\leq 1$. It is known that for a balanced set
$X$ that is factor closed (i.e., $X = \cup_{x\in X} \lang{x}$), has at most $n+1$ elements
of length $n$, for each $n \in \N$ (\cite[Prop.~2.1.2]{MR1905123}). This fact will be used in
several places of the following two lemmas.

\begin{lemma}\label{lem:factorizationOfShift}
Let $\infw x$ be an infinite, recurrent, aperiodic binary word which is not Sturmian.
Then there exists a standard pair $(x,y)$ such that some shift of
$\infw x$ is a product of $x$ and $y$, and both $xx$ and $yy$ occur in the corresponding
factorization. Moreover, the shortest unbalanced pair of factors in $\infw{x}$ has length
$|xy|$.
\end{lemma}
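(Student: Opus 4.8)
The plan is to analyze the structure of $\infw{x}$ through its shortest unbalanced pair of factors. Since $\infw{x}$ is recurrent, aperiodic, and not Sturmian, it is not balanced, so there is a least integer $m$ such that $\lang[m]{\infw{x}}$ contains two words $u,v$ with $|u|_1 - |v|_1 \geq 2$; by \autoref{lem:continuity} (continuity of abelian complexity) and minimality of $m$, in fact $|u|_1 - |v|_1 = 2$, and $\lang[m-1]{\infw{x}}$ is balanced. First I would show that the set $B = \lang[\leq m-1]{\infw{x}}$ is a factor-closed balanced set, hence has at most $k+1$ words of each length $k$; combined with aperiodicity and recurrence one gets that, up to swapping $0$ and $1$, $B$ is exactly the set of factors of some periodic or aperiodic Sturmian word, and there is a unique bispecial word $w$ in $\lang[m-2]{\infw{x}}$ which is central (this is where the structural results of \autoref{subsec:Standard} and \cite[\S2.2.1]{MR1905123} enter). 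Write $w = p10q$ for the unique central words $p,q$, and let $(x,y)$ be the standard pair with $x = p10$, $y = q01$, as in \autoref{lem:centralStandardPeriods}; then $|xy| = |w| + 2 = m$.

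Next I would identify how the ``excess'' in $\infw{x}$ over the Sturmian set $B$ manifests. The point is that $w$, being left special in the Sturmian language generated by $B$, has exactly two left extensions $0w$ and $1w$ within $B$; but since $\infw{x}$ has a length-$m$ factor heavier than all others, some occurrence of $w$ in $\infw{x}$ is extended on both sides beyond what $B$ allows. Using the unique factorization $w01 = xy$ and $w10 = yx$ of the central word, together with the equalities $wx^R = xw$ and $wy^R = yw$ from \autoref{lem:centralStandardPeriods}, I would show that every long factor of $\infw{x}$ decomposes as a concatenation of blocks $x$ and $y$ (the ``standard decomposition''): the unbalance of length $m$ forces the occurrences of the shorter Sturmian blocks to line up into $x$'s and $y$'s, and the heavier/lighter length-$m$ factors correspond precisely to $yy$ and $xx$ appearing in this block decomposition. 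More carefully, I would take a shift $\infw{x}'$ of $\infw{x}$ beginning at a block boundary of such a decomposition (recurrence guarantees block boundaries occur with bounded gaps, so a suitable shift works), so that $\infw{x}' = z_1 z_2 z_3 \cdots$ with each $z_i \in \{x,y\}$.

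Then I would argue that both $xx$ and $yy$ occur among the blocks $z_i z_{i+1}$: if, say, $yy$ never occurred, then between any two consecutive $x$-blocks there is at least one $y$-block... — wait, more directly: the existence of a length-$m$ factor strictly heavier than the Sturmian maximum corresponds exactly to the block $yy$ occurring (since $|y|_1 = |q|_1 + 1$ is the heavier option and two consecutive $y$'s at a block boundary gives weight $|yy|_1$ on a window of length $m = |xy|$ exceeding $|xy|_1$ by... the relevant count), and symmetrically a strictly lighter length-$m$ factor corresponds to $xx$; by \autoref{lem:AperBinRatFreqInfSup} or simply by aperiodicity plus the fact that $m$ is the \emph{shortest} unbalanced length, both a heavier and a lighter length-$m$ factor exist, so both $xx$ and $yy$ appear. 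The main obstacle, and the step I would spend the most care on, is establishing the clean block decomposition $\infw{x}' = z_1 z_2 \cdots$ with $z_i \in \{x,y\}$ — i.e., that once you pass length $m$, the non-Sturmian word is genuinely built out of the two standard blocks with no other ``seams'' possible. This requires carefully combining the uniqueness of the standard-pair factorization of $w01$ and $w10$, the period structure from \autoref{thm:periodsOfCentral}, and a synchronization argument showing that an occurrence of $w$ can only be continued in the ways dictated by $x$ and $y$; the minimality of $m$ is essential here to rule out longer overlaps. Finally, the ``moreover'' clause is immediate: $|xy| = |w| + 2 = m$ is exactly the length of the shortest unbalanced pair by construction.
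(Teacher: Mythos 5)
Your proposal follows essentially the same route as the paper: locate the minimal unbalanced length $m$, extract the central palindrome $w$ with $0w0,1w1\in\lang{\infw{x}}$, form the standard pair via $w01=xy$, show that a suitable shift factors over $\{x,y\}$, and read off $xx$ and $yy$ from the occurrences of $1w1$ and $0w0$. The paper carries out the synchronization step you flag as the hard part by showing $\lang[\leq|w|+1]{\infw{x}}=\lang[\leq|w|+1]{\infw{s}}$ for a Sturmian $\infw{s}$ (via a balance/counting argument on right special factors) and then reading the block structure off the Rauzy graph of order $|w|$, which consists of two cycles of lengths $|x|$ and $|y|$ through the bispecial vertex $w$; combined with \autoref{thm:periodsOfCentral} and \autoref{lem:centralStandardPeriods} this gives exactly the identities $wx^R=xw$, $wy^R=yw$ you invoke. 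So your outline is on target, and your final observation that the ``moreover'' clause is immediate from $|xy|=|w|+2=m$ matches the paper.

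One concrete gap: you write $w=p10q$ for central words $p,q$ without excluding the degenerate case $w\in 0^*\cup 1^*$, where no such factorization exists. The paper treats this separately: if $w=0^n$ the relevant standard pair is $(0,0^n1)$, a shift of $\infw{x}$ beginning with $0^n1$ factors uniquely over this pair, and the occurrences of $0^{n+2}$ and $10^n1$ (forced by the unbalanced pair $0w0$, $1w1$) supply $xx$ and $yy$ respectively. Your argument as written silently assumes $w\notin 0^*\cup 1^*$, so you should add this case. Beyond that, the synchronization step remains a sketch in your write-up (you acknowledge this), but the tools you name are the right ones and the paper's Rauzy-graph argument fills in exactly that step.
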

\begin{proof}
As $\infw x$ is non-Sturmian and aperiodic, it follows that $\infw x$ contains the factors
$0w0$ and $1w1$, where $w$ is a palindrome. Furthermore, $|0w0|$ is the least length for
which such an unbalanced pair exists (this fact is implicit in the proof of
\cite[Prop.~2.1.3]{MR1905123}). Notice now that $\lang[\leq |w|+1]{\infw{x}}$ is balanced by
the minimality of $|w|$. In particular, both $0w0$ and $1w1$ are balanced. It follows that $w$
is a right special factor of some Sturmian word $\infw{s}$. Furthermore, since $w$ is a
palindrome, it is even a central word (see
\cite{deLuca1996:sturmian_words_structure_combinatorics_arithmetics} or
\cite[Prob.~2.2.7]{MR1905123}).

If $w=0^n$ for some $n\geq 0$, then $10^n1$ is the shortest block of $0$s surrounded by $1$s
occurring in $\infw x$. Since $\infw x$ is recurrent, some shift $\infw y$ of $\infw x$ begins
with $0^n1$. Note that $(0,0^n1) = (x,y)$ is a standard pair. Now $\infw y$ can be expressed
as a product of the words $x$ and $y$ in a unique way. By assumption, both $0^{n+2}$ and
$10^n1$ occur in $\infw y$. The former implies that $0^{n+2}1 = xxy$ occurs in the
factorization, and the latter implies that $0^n10^n1 = yy$ occurs in the factorization.

We are left with the case that $w\notin 0^*\cup 1^*$. Now we may write $w01 = xy$ for a
(unique) standard pair $(x,y)$.
Then $x=p10$ and $y = q01$ for some central words $p,q$ by the above lemma. We claim that a
shift $\infw y$ of $\infw x$ is a product of the words $x$, $y$, and this factorization
contains both $xx$ and $yy$. 

Next we show that $\lang[\leq |w|+1]{\infw{x}} = \lang[\leq |w|+1]{\infw{s}}$ for some
Sturmian word $\infw{s}$. Take the Sturmian word $\infw{s}$ from the beginning of this proof:
$w$ is a right special factor of $\infw{s}$. Consider the set
$X = \lang[\leq |w|+1]{\infw{x}} \cup \lang[\leq |w|+1]{\infw{s}}$. If it is balanced, then we are done, as the sets must then be equal by a counting argument. So assume that it is unbalanced. The shortest
unbalanced pair is $0w'0$, $1w'1 \in X$ (see again \cite[Prop.~2.1.3]{MR1905123}) for some
palindrome $w'$. Assume $0w'0 \in \lang{\infw{x}}$ but $1w'1 \notin \lang{\infw{x}}$ and the
converse for $\infw{s}$ (the other case is symmetric). Observe then that $0w'$ is right
special in $\infw{x}$, and $1w'$ is right special in $\infw{s}$. But, since they are the unique
right special words of their length, a contradiction is reached, since both words are suffixes
of $w$ as the unique right special factors of their length.

Consider the \emph{Rauzy graph $G$ of order $|w|$ of $\infw x$}.%
\footnote{The Rauzy graph, or \emph{factor graph} of order $n$ has vertex set
$V = \lang[n]{\infw{x}}$, and there is a directed edge $(u,v)$ if there exist letters
$a,b$ such that $ua = bv \in \lang[n+1]{\infw{x}}$. See \S1.3.4 and \S2.2.3 of \cite{MR1905123}
for basic properties of general Rauzy graphs and Rauzy graphs of Sturmian words, respectively.} 
Then $G$ coincides with the Rauzy graph of $\infw s$. In $\infw s$, $w$ is a bispecial factor.
Thus $G$ consists of two cycles, say with labels $x'$ and $y'$, which share the single vertex
$w$ (see \autoref{fig:RauzyGraphOfBispecial}).
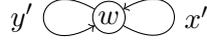
\begin{figure}
\begin{center}
	\begin{tikzpicture}
		\node[draw,circle,inner sep=1pt] (a) at (0,0) {$w$};
		\draw[->] (a) edge [loop,out=330,in=30,looseness=12] node[right] {$x'$} (a);
		\draw[->] (a) edge [loop,out=150,in=210,looseness=12] node[left] {$y'$} (a);
	\end{tikzpicture}
	\end{center}
	\caption{An illustration of the Rauzy graph of order $|xy|$ of an infinite word obtained as a product of standard pair $(x,y)$. The word $w$ is the only left (resp. right) special factor among factors of the same length.}
	\label{fig:RauzyGraphOfBispecial}
\end{figure}
Notice that $wx'$ and $wy'$ have $w$ as a suffix. Furthermore $|x'|+|y'| = |w|+2$ (the number
of edges in $G$) and, since $w \notin 0^*\cup 1^*$, we necessarily have $|x'|, |y'| \geq 2$.
It follows that $|x'| = |w| + 2 - |y'| \leq |w|$ and similarly $|y'| \leq |w|$. Therefore
$w x' = x'' w$, for some $x'' \in \words$. In particular, for $w = a_1 \cdots a_n$, we have
$w_{i} = w_{i+|x'|}$ for $i = 1$, \ldots, $n-|x'|$, so that $|x'|$ is a period of $w$. Similarly $|y'|$ is a period of 
$w$.
Since $x'$ and $y'$ begin with distinct letters, the periods must have different lengths.
By \autoref{thm:periodsOfCentral} we have, without loss of generality, $|x'|=|p|+2 = |x|$ and
$|y'| = |q|+2 = |y|$. By the above lemma, we then have
$x' = x^R$, $y' = y^R$. Hence $wx' = xw$ and $wy' = yw$. Now let $\infw y$ be any shift of
$\infw x$ beginning with $w$; clearly $\infw y$ is in the set $w\{x',y'\}^{\N} = \{x,y\}^{\N}$.

It remains to show that $\infw y$ contains both $xx$ and $yy$ in this factorization.
Consider an occurrence of $1w1$. Now since $y$ ends with $0$ and $y'$ begins with $0$, we see
that $1w1$ occurs as the central factor of $xwx' = xxw$. Thus $xx$ occurs in the factorization.
In a similar fashion, we find an occurrence of $yy$ by inspecting occurrences of $0w0$. This
concludes the proof.
\end{proof}

The following lemma can be seen as a counterpart of the previous lemma.
We need the following celebrated result of M. Morse and G. Hedlund which characterizes
ultimately periodic words
in terms of the factor complexity function.
\begin{theorem}[Morse--Hedlund]\label{thm:MorseHedlundConsecutive}
An infinite word is ultimately periodic if and only if $\compl[\infw{x}]{n} = \compl[\infw{x}]{n+1}$ for some $n \in \N$. In this case $\complfunction{\infw{x}}$ is uniformly bounded.
\end{theorem}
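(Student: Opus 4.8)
The plan is to prove both implications and the boundedness clause in the classical way, the engine being that the factor complexity is monotone and that any equality $\compl[\infw{x}]{n} = \compl[\infw{x}]{n+1}$ rigidifies the structure of $\infw{x}$. First I would record monotonicity: for each $m$ the map $\pi_m\colon \lang[m+1]{\infw{x}}\to\lang[m]{\infw{x}}$ sending a word to its length-$m$ prefix is surjective, since given $u\in\lang[m]{\infw{x}}$ with occurrence $u=\infw{x}_{[i,i+m)}$ the word $\infw{x}_{[i,i+m+1)}$ lies in $\lang[m+1]{\infw{x}}$ and maps to $u$ (here one uses only that $\infw{x}$ is one-sided infinite, so position $i+m$ carries a letter). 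As these are finite sets, $\compl[\infw{x}]{m}\le\compl[\infw{x}]{m+1}$, so $\complfunction{\infw{x}}$ is non-decreasing.

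For $(\Leftarrow)$, suppose $\compl[\infw{x}]{n}=\compl[\infw{x}]{n+1}$ for some $n$; the case $n=0$ forces a one-letter alphabet and is trivial, so take $n\ge 1$. Then $\pi_n$ is a surjection between finite sets of equal size, hence a bijection, which says precisely that every length-$n$ factor $u$ has a unique one-letter right extension $ua\in\lang{\infw{x}}$. This lets me define a self-map $f$ of the finite set $\lang[n]{\infw{x}}$ by taking $f(u)$ to be the length-$n$ suffix of $ua$. Writing $w_i:=\infw{x}_{[i,i+n)}$, one checks $w_{i+1}=f(w_i)$ for all $i\ge 0$, so $(w_i)_{i\ge0}$ is the forward orbit of $w_0$ under $f$. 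Any forward orbit of a self-map of a finite set is eventually periodic, so there are $N\ge0$ and $p\ge1$ with $w_{i+p}=w_i$ for all $i\ge N$; comparing first letters yields $\infw{x}_{i+p}=\infw{x}_i$ for all $i\ge N$, i.e.\ $\infw{x}$ is ultimately periodic.

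For $(\Rightarrow)$ and boundedness, suppose $\infw{x}=uv^{\omega}$ with $|u|=k$ and $|v|=p\ge1$. A length-$m$ factor occurring at a position $i\ge k$ lies inside the $p$-periodic tail and depends only on $i\bmod p$, giving at most $p$ such factors, while at most $k$ further ones come from positions $<k$; hence $\compl[\infw{x}]{m}\le k+p$ for all $m$, so $\complfunction{\infw{x}}$ is uniformly bounded. Being non-decreasing, integer valued and bounded, $\complfunction{\infw{x}}$ is eventually constant, so $\compl[\infw{x}]{n}=\compl[\infw{x}]{n+1}$ for all large $n$. The boundedness clause of the statement then follows because, by the equivalence just proved, ``$\compl[\infw{x}]{n}=\compl[\infw{x}]{n+1}$ for some $n$'' means ``$\infw{x}$ is ultimately periodic'', where the count above applies.

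There is no genuine obstacle here, this being a classical fact; the one point requiring care in $(\Leftarrow)$ is recognizing that equality of complexities turns $\infw{x}$ into the trajectory of a single point under a deterministic map on a finite state set (the unique-right-extension observation), together with the harmless degenerate cases ($n=0$, and the fact that one-sidedness makes the extension maps total). One may alternatively avoid invoking the $(\Rightarrow)$ count for the boundedness clause by noting that the unique-extension property propagates from length $n$ to all larger lengths, so $\complfunction{\infw{x}}$ is constant from $n$ onward, but reusing the count is shorter.
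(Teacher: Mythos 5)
Your proof is correct and complete: the paper itself does not prove this statement but cites it as the classical Morse--Hedlund theorem, and your argument is the standard one (monotonicity of $\complfunction{\infw{x}}$ via surjectivity of the prefix map, the unique-right-extension/deterministic-self-map argument for the reverse implication, and the $k+p$ count for the forward implication and the boundedness clause). All steps check out, including the degenerate case $n=0$ and the observation that one-sidedness makes the extension maps total; no comparison with the paper is needed since no proof is given there.
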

\begin{lemma}\label{lem:shortestUnbalancedPairBalancedSet}
Assume that an infinite binary word $\infw x$ can be expressed as a product of the standard pair $(x,y)$. Then the set of factors of length less than $|xy|$ is balanced.
\end{lemma}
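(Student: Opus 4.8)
The statement to prove is \autoref{lem:shortestUnbalancedPairBalancedSet}: if $\infw x \in \{x,y\}^{\N}$ for a standard pair $(x,y)$, then $\lang[<|xy|]{\infw x}$ is balanced. The natural strategy is to compare $\infw x$ with a genuine Sturmian (or periodic Sturmian) word produced by the same standard pair. Recall from \autoref{subsec:Standard} that for a standard pair $(x,y)$, the word $xy$ (equivalently $yx$) is a standard word, so $(xy)^{\omega}$ is periodic Sturmian and hence balanced; moreover the characteristic Sturmian word $\infw s$ whose directive sequence extends the one producing $(x,y)$ has $xy$ as a prefix and all of $x,y,xx,xy,yx,yy$ among its factors (when the pair is nontrivial). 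In particular $\lang[]{\infw s}$ is balanced. The plan is to show $\lang[<|xy|]{\infw x} \subseteq \lang[]{\infw s}$, from which balancedness is inherited.

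\textbf{Key steps.} First I would reduce to the nontrivial case: if $x$ or $y$ is a single letter, then $(x,y) = (0,0^n1)$ or $(1^n0,1)$ up to symmetry, $|xy| = n+2$, and one checks directly that every factor of length $\le n+1$ of any word in $\{0,0^n1\}^{\N}$ is balanced (the only way to get an unbalanced pair $0^{n+2}$ vs.\ $1w1$ requires length $\ge n+2$). Second, in the general case write $w$ for the central word with $xy = w01$, so $|w| = |xy| - 2$ and, by \autoref{lem:centralStandardPeriods}, $x = p10$, $y = q01$ with $w = p10q = q01p$ and $wx^R = xw$, $wy^R = yw$. The idea is that a factor of $\infw x$ of length at most $|w|+1 = |xy|-1$ spans at most "two consecutive blocks minus a bit", and I claim every such factor already occurs in the periodic Sturmian word $(xy)^{\omega}$ or $(yx)^{\omega}$ — more precisely, every length-$(\le |w|+1)$ factor of a word in $\{x,y\}^{\N}$ lies in $\lang{(xy)^{\omega}} \cup \lang{(yx)^{\omega}} \cup \lang{(xx\cdots)}\cup\ldots$, all of which are balanced sets of the same slope and agree on factors up to length $|w|+1$. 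Concretely, using $wx^R = xw$ and $wy^R = yw$ one shows that in any product $z_1 z_2 z_3 \cdots$ with $z_i \in \{x,y\}$, a factor starting inside $z_k$ and of length $\le |w|+1$ is determined by $z_k z_{k+1}$ and in fact equals a factor of one of $(xy)^{\omega}, (yx)^{\omega}$ — this is because $w$ is the unique right-special (and left-special) factor of length $|w|$ in each of those Sturmian-type words, so the four "transition" words $xx, xy, yx, yy$ all have the same length-$(|w|+1)$ factors, which are exactly the length-$(|w|+1)$ factors of the characteristic Sturmian word $\infw s$. Finally, since $\lang[\le|w|+1]{\infw s}$ is balanced (a subset of the factors of a Sturmian word), so is $\lang[<|xy|]{\infw x}$.

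\textbf{Alternative, cleaner route.} Rather than the case analysis on transitions, I would more likely argue as follows. Let $\infw s$ be the characteristic Sturmian word with directive sequence extending that of $(x,y)$ by appending, say, $(1,1,1,\ldots)$ — so $\infw s \in \{x,y\}^{\N}$ and both $xx$ and $yy$ occur in its canonical factorization (by the standard-sequence recursion $S_{n+1} = S_n^{a_{n+1}}S_{n-1}$, once we take the continued fraction to continue). Then $\lang[\le |w|+1]{\infw x}$ and $\lang[\le |w|+1]{\infw s}$: every factor of $\infw x$ of length $\le |w|+1$ sits inside a product $z_i z_{i+1}$ of two consecutive blocks (since each block has length $\ge 2$ and $\cdots$ — more carefully, inside $z_{i-1}z_i z_{i+1}$, but one shows the middle block suffices by the period argument), and each of $xx, xy, yx, yy$ occurs as a factor of $\infw s$; hence $\lang[\le |w|+1]{\infw x} \subseteq \lang{\infw s}$, which is balanced.

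\textbf{Main obstacle.} The delicate point is justifying that a window of length $|w|+1 = |xy|-1$ never "sees three blocks in an essential way" — i.e.\ that its letter content is already realized by one of the four two-block words $xx, xy, yx, yy$. This rests on the identities $wx^R = xw$, $wy^R = yw$ from \autoref{lem:centralStandardPeriods} (equivalently, $|x|$ and $|y|$ are periods of $w$) together with $\min(|x|,|y|) \ge 2$ and $|x|+|y| = |w|+2$: these force any length-$(|w|+1)$ factor straddling a block boundary to be a conjugate-controlled factor of $w$ padded by one letter, which is exactly a length-$(|w|+1)$ factor of the Rauzy graph pictured in \autoref{fig:RauzyGraphOfBispecial}. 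Once that combinatorial lemma is in place, inheritance of balancedness from the Sturmian word $\infw s$ is immediate.
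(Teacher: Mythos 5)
Your proposal is correct, and it shares its combinatorial core with the paper's proof while finishing by a genuinely different (and arguably cleaner) route. Both arguments dispose of the pairs $(0,0^n1)$ and $(1^n0,1)$ by inspection, and both hinge on the same key fact in the general case: writing $xy=w01$ with $x=p10$, $y=q01$, the identities $xw=wx^R$ and $yw=wy^R$ from \autoref{lem:centralStandardPeriods}, together with $|x|,|y|\le|w|$, force every factor of length at most $|w|+1=|xy|-1$ of a product of $x$'s and $y$'s to be a factor of $xw$ or of $yw$ --- equivalently, a path of edges in the two-cycle Rauzy graph of \autoref{fig:RauzyGraphOfBispecial}. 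From there the paper counts: it gets $\compl[\infw{x}]{|w|+1}\le|w|+2$, invokes Morse--Hedlund (using aperiodicity of $\infw{x}$) to conclude $\compl[\infw{x}]{n}=n+1$ for $n\le|w|+1$, and then reruns the right-special/unbalanced-pair argument of \cite[Thm.~2.1.5]{MR1905123} by hand. You instead note that $xw$ and $yw$ are prefixes of $(xy)^{\omega}$ and $(yx)^{\omega}$ respectively (both begin with $w$, being infinite products of the pair), so every factor of length $<|xy|$ of $\infw{x}$ already lies in the language of a periodic (or, if you prefer, characteristic) Sturmian word and inherits balancedness for free. This avoids the complexity count entirely and, as a bonus, does not require aperiodicity of $\infw{x}$ --- a hypothesis the lemma does not state but which the paper's Morse--Hedlund step silently uses.

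One caution on your sketch: the claims that a window of length $\le|w|+1$ ``sits inside $z_iz_{i+1}$'' or even inside $z_{i-1}z_iz_{i+1}$ are literally false --- if $|x|=2$ and $|y|$ is large, such a window can straddle arbitrarily many $x$-blocks. The correct formulation, which your period argument does deliver, is positional: if the window starts at a position $p$ with $P_{k-1}\le p<P_k$ (where $P_k=|z_1\cdots z_k|$), then it ends before position $P_k+|w|$, hence is contained in the occurrence of $z_kw=wz_k^R$ beginning at $P_{k-1}$; here one uses that $w$ is a prefix of every tail product $z_{k+1}z_{k+2}\cdots$, which follows by iterating $xw=wx^R$, $yw=wy^R$. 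Phrased this way, your ``main obstacle'' dissolves and the rest of the argument is complete.
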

\begin{proof}
The statement is true when $(x,y) \in \{(0,0^n1), (1^n0,1) \colon n\in \N\}$ by
inspection. We may thus assume that $x = p10$ and $y = q01$ for some central words $p$ and
$q$. Let $w = p10q = q01p$. It follows that $\infw x$ begins with $w$. Since $\infw{x}$ is
aperiodic, the factorization into the standard pair $(x,y)$ contains both the factors
$xy$ and $yx$. Assume that the former occurs first (the latter case is symmetric) so that
the factorization begins with $x^n xy = (p10)^nq01 = p01q(01p)^n01$ for some $n\geq 0$. It
is now evident that $\infw{x}$ begins with $w$. Furthermore, $w$ is always followed by
$x^R$ or by $y^R$. We deduce that the Rauzy graph of order $|w|$ of $\infw{x}$ is as in
\autoref{fig:RauzyGraphOfBispecial}. This implies that the number of
factors of $\infw{x}$ of length $|w|+1$ equals $|xy| = |w|+2$. Since $\infw x$ is
aperiodic, it follows by the Morse--Hedlund theorem \autoref{thm:MorseHedlundConsecutive}
that $\compl[\infw{x}]{n} = n+1$ for each $n\leq |w|+1$.

We claim that the set $X = \cup_{n \leq |w|+1} \lang[n]{\infw{x}}$ is balanced.
To see this, one
can proceed as in \cite[Thm.~2.1.5]{MR1905123}: If $X$ is not balanced, then by
\cite[Prop.~2.1.3]{MR1905123}, the set contains a palindrome $w'$ such that $0w'0$,
$1w'1 \in X$. Under our assumptions $|w'| \leq n - 1$. Since $\compl[\infw{x}]{k} = k+1$,
for each length $k < |w|+1$ there is a unique right special word $u \in X$ of length $n$.
Observe that any suffix of $u$ is also right special. Now, since $w'$ is right special, it
follows that either $0w'$ or $1w'$ is right special. Assuming that $0w'$ is right special (so
$1w'$ is not), it follows that $1w$ is always followed by $1$. Letting $v$ be a word such that
$1w'1v \in \lang[2|w'|]{\infw{x}}$. It can be shown that none of the factors of length $|0w'|$
of $1w'1v$ are right special (i.e., $0w'$ does not occur in $1w'1v$. This, further, can be
shown to imply that $\infw{x}$ is ultimately periodic. This contradiction concludes the proof.
\end{proof}

\section{Abelian closures of non-balanced words}\label{sec:non-bal}

To conclude the proof of \autoref{thm:binary}, we consider the case of non-balanced words.
\begin{proposition}\label{prop:non-balancedAbelianSubshift}
Let $\infw{x}\in \{0,1\}^{\N}$ be a uniformly recurrent, non-balanced word.
Then $\abclsr{\infw x}$ contains infinitely many minimal subshifts.
\end{proposition}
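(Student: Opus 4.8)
The plan is to reduce to the situation handled by the tools of \autoref{sec:tools}, and then to mimic the geometric squeezing argument of \autoref{sec:C-bal} in a setting where the ``width'' is no longer bounded. First I would reduce to the case $\freq[\infw{x}]{1} < 1/2$: if the frequency does not exist we are done by \autoref{prop:NoFrequency}; if it is rational we are done by \autoref{prop:rationalFreqUncountable}; and if the (necessarily irrational) frequency equals $1/2$ one can pass to an exchanged copy of a nearby word, or argue directly, so we may assume $\freq[\infw{x}]{1} = \alpha < 1/2$. By \autoref{cor:AbelianSsNonBalancedContainsIsolated1} there is a non-balanced word $\infw{x}' \in \abclsr{\infw{x}}$ in which $11$ does not occur; since $\abclsr{\infw{x}'}\subseteq \abclsr{\infw{x}}$, it suffices to produce infinitely many minimal subshifts inside $\abclsr{\infw{x}'}$, so we may assume from the start that $11\notin \lang{\infw{x}}$. (One must check this preserves uniform recurrence; this follows since $F$ maps uniformly recurrent words to uniformly recurrent words, and one can take $\infw{x}'$ to be a uniformly recurrent element of $\soc{F^n(\infw{x})}$.)

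Next I would analyse the structure of such an $\infw{x}$. Since $11\notin\lang{\infw{x}}$ but $\infw{x}$ is non-balanced, it contains arbitrarily long blocks $0^k$, and in fact (being non-balanced) the maximal run-length of $0$s between consecutive $1$s is unbounded. By \autoref{lem:factorizationOfShift}, a shift $\infw{y}$ of $\infw{x}$ is a product of a standard pair $(x,y)$ with both $xx$ and $yy$ occurring in this factorization; since $11\notin\lang{\infw{x}}$, the heavier word of the pair must be a single letter, so $(x,y) = (0,0^m1)$ for some $m\geq 1$, and $\infw{y}\in \{0,\,0^m1\}^{\mathbb{N}}$ with both $00$ (trivially) and $0^m10^m1$ occurring. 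Thus $\infw{y}$ is obtained from an infinite word $\infw{z}$ over $\{a,b\}$ by substituting $a\mapsto 0$, $b\mapsto 0^m1$; uniform recurrence and aperiodicity of $\infw{y}$ give the same for $\infw{z}$, and non-balancedness of $\infw{x}$ forces $\infw{z}$ to be non-Sturmian (if $\infw{z}$ were Sturmian, $\infw{y}$ would be the image of a Sturmian word under a Sturmian morphism, hence Sturmian). The key point is that $\abclsr{\infw{z}}$ contains infinitely many minimal subshifts by induction on a suitable complexity parameter — e.g. the length of the shortest unbalanced pair, which strictly decreases under this desubstitution — and then \autoref{prop:AbSsClosedUnderSturmian} transports these back up: the Sturmian morphism $a\mapsto 0$, $b\mapsto 0^m1$ sends distinct minimal subshifts in $\abclsr{\infw{z}}$ to distinct minimal subshifts in $\abclsr{\infw{y}} = \abclsr{\infw{x}}$ (distinctness is preserved because the morphism is injective on words and a periodic point of the image subshift determines a periodic point of the preimage). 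The base case of the induction is the $C$-balanced (in particular Sturmian-width) case, already handled by \autoref{Prop:C-bal}, or words with non-irrational/rational data handled earlier.

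The main obstacle I expect is making the induction rigorous: one must verify that the desubstitution $\infw{x}\rightsquigarrow\infw{z}$ genuinely reduces a well-founded parameter and does not leave the class of uniformly recurrent aperiodic non-Sturmian words (or else lands in a base case already treated), and that the morphism-pushforward really does preserve \emph{distinctness} and \emph{minimality} of subshifts rather than merely containment of abelian closures. A cleaner alternative, avoiding induction, is to argue directly by a squeezing-type operation: since $\infw{z}$ (equivalently, $\infw{x}$ after removing one letter from each $b$-block) is non-balanced with irrational frequency, its graph has unbounded width, so for every threshold $C$ there are points of $g_{\infw{z}}$ above and below the line $y = \alpha' x + C$ occurring with bounded gaps, and one can run upper $C$-squeezing at a divergent sequence of thresholds $C_1 < C_2 < \cdots$, each time producing (by an argument parallel to \autoref{lem:uppersqueezing} and \autoref{claim4}--\autoref{claim5}) a word in the abelian closure whose graph fits in a strictly narrower corridor, hence a subshift not met before. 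The delicate part of that route is that \autoref{lem:uppersqueezing} requires the corridor to contain the graph, which fails here; so one instead squeezes only the finitely-bounded excursions above $y = \alpha' x + C_k$, exactly as in the traffic-automaton argument of \autoref{subsec:CA}, and uses \autoref{lem:TrafficPreservesAbelianSubshift} together with \autoref{lem:preimageOfOrderN} to certify that the squeezed words are genuinely new. I would present the morphic-reduction proof as the main line and remark on the direct squeezing proof as an alternative, deferring its details to \autoref{sec:alt}.
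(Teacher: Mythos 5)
Your proposal assembles the right ingredients (reduction to irrational frequency below $1/2$, isolating the $1$s via \autoref{cor:AbelianSsNonBalancedContainsIsolated1}, desubstitution through the standard pair of \autoref{lem:factorizationOfShift}, pushforward via \autoref{prop:AbSsClosedUnderSturmian}), but the logical architecture of the main line --- a downward induction on the length of the shortest unbalanced pair --- does not work. Two concrete problems. First, the structural claim that $11\notin\lang{\infw{x}}$ forces the standard pair to be $(0,0^m1)$ is false: a product of the standard pair $(010,01)$ also avoids $11$, and nothing prevents the shortest unbalanced pair $\{0w0,1w1\}$ from having $w\notin 0^*$ (e.g.\ $w=010$, so $1w1=10101$). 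Relatedly, non-balancedness with isolated $1$s does not imply unbounded runs of $0$s. Second, and fatally, the induction parameter collapses immediately: \autoref{lem:factorizationOfShift} guarantees that the desubstituted word $\infw{z}$ contains \emph{both} $00$ and $11$, so its shortest unbalanced pair has length exactly $2$ --- the minimum possible --- after a single desubstitution. The ``base case'' is therefore the class of all uniformly recurrent words containing $00$ and $11$, which is not covered by \autoref{Prop:C-bal} or the rational/no-frequency cases, so the induction is circular. Your fallback route (squeezing at divergent thresholds) is also known to be insufficient here: this is precisely what \autoref{prop:unbal} in \autoref{subsec:C-squeezingRevisited} does, and it only yields non-uniformly-recurrent words with distinct languages, which need not give distinct \emph{minimal} subshifts.

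The paper's proof runs the construction \emph{upward} instead of downward. Starting from $\infw{z}_0=\infw{x}$ (with $1$s isolated), it desubstitutes via the standard pair to get $\infw{y}_{n+1}$ with $\varphi_{n+1}(\infw{y}_{n+1})=\infw{z}_n$, applies $F$-iterates to find a non-balanced $\infw{z}_{n+1}\in\abclsr{\infw{y}_{n+1}}$ with isolated $1$s, and pushes forward by the accumulated standard morphism $\psi_{n+1}=\psi_n\circ\varphi_{n+1}$ to get $\infw{x}_{n+1}=\psi_{n+1}(\infw{z}_{n+1})\in\abclsr{\infw{x}}$. No induction hypothesis about $\abclsr{\infw{z}}$ is needed: each round produces just \emph{one} new word, and the words are distinguished directly, because $\infw{x}_n=\psi_{n+1}(\infw{y}_{n+1})$ is a product of the standard pair $(x_n,y_n)=(\psi_{n+1}(0),\psi_{n+1}(1))$ containing $x_nx_n$ and $y_ny_n$, so by \autoref{lem:factorizationOfShift} and \autoref{lem:shortestUnbalancedPairBalancedSet} its shortest unbalanced pair has length exactly $|x_ny_n|$, and these lengths are strictly increasing since each $\varphi_{n+1}$ is non-trivial. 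This sidesteps both of your difficulties: no well-founded descent is required, and distinctness of the subshifts is certified combinatorially rather than via injectivity of the morphism on subshifts.
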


We first make a straightforward observation related to irrational letter
frequencies and morphisms.

Recall that $\Psi(u)$ is the Parikh vector of $u$.

\begin{definition}
A morphism $f \colon \{0,1\}^* \to \{0,1\}^*$ is called \emph{degenerate} if
$\Psi(f(0))$ and $\Psi(f(1))$ are linearly dependent.
Otherwise it is called non-degenerate.
\end{definition}

Notice that any erasing morphism is degenerate. On the other hand, any Sturmian morphism $\varphi$ is non-degenerate. Indeed, it can be shown, by induction on the length of a defining sequence of generators, that
$\gcd(|\varphi(0)|,|\varphi(1)|) = 1$. This suffices for non-degeneracy, as can be
established with elementary properties of integers and the fact that $|\varphi(01)|_a \geq 1$ for $a = 0,1$.

The following lemma is immediate.
\begin{lemma}
Let $f$ be a degenerate morphism. Then, for all $u$ for which $f(u) \neq \eps$, we have
$\freq[f(u)]{1} = C$ for some rational constant $C$.
\end{lemma}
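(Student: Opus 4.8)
The plan is a direct computation built on the vanishing of a determinant. Write $m_a = |f(a)|$ and $w_a = |f(a)|_1$ for $a \in \{0,1\}$, so that $\Psi(f(a)) = (m_a - w_a,\, w_a)$. Degeneracy of $f$ means that $\Psi(f(0))$ and $\Psi(f(1))$ are linearly dependent, and taking the determinant of the $2\times 2$ matrix with these two rows shows that this is equivalent to the identity $m_0 w_1 = m_1 w_0$. This is the only structural input needed.

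First I would dispose of the degenerate-in-the-other-sense cases, i.e.\ where some $f(a) = \eps$, since erasing morphisms are degenerate. If $f(0) = f(1) = \eps$, there is no $u$ with $f(u) \neq \eps$ and the statement is vacuous. If exactly one image is empty, say $f(1) = \eps$ and $f(0) \neq \eps$, then $f(u) \neq \eps$ forces $|u|_0 \geq 1$ and $\Psi(f(u)) = |u|_0\,\Psi(f(0))$, so $\freq[f(u)]{1} = w_0/m_0$, a fixed rational constant; the case $f(0) = \eps$ is symmetric.

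So we may assume $m_0, m_1 \geq 1$, and then the identity $m_0 w_1 = m_1 w_0$ reads $w_0/m_0 = w_1/m_1 =: C \in \Q$. For any finite word $u$ we have $\Psi(f(u)) = |u|_0\,\Psi(f(0)) + |u|_1\,\Psi(f(1))$, hence $|f(u)|_1 = |u|_0 w_0 + |u|_1 w_1 = C\,(|u|_0 m_0 + |u|_1 m_1) = C\,|f(u)|$, and dividing by $|f(u)| > 0$ gives $\freq[f(u)]{1} = C$. For an infinite word $u$ with $f(u)$ infinite, each prefix of $f(u)$ of the form $f(v)$ with $v$ a nonempty prefix of $u$ has $1$-frequency exactly $C$; since consecutive such prefixes differ in length by at most $\max(m_0,m_1)$, every sufficiently long prefix of $f(u)$ has $1$-frequency within $o(1)$ of $C$, so $f(u)$ has uniform frequency $C$ of the letter $1$.

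There is essentially no real obstacle: the computation with the rank-one Parikh matrix is immediate, and the only point requiring attention is to treat the erasing cases $f(a) = \eps$ separately before invoking the clean relation $w_0/m_0 = w_1/m_1$.
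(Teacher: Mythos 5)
Your proof is correct, and since the paper states this lemma without proof (``The following lemma is immediate''), your computation via the vanishing determinant $m_0w_1=m_1w_0$, yielding $|f(u)|_1=C\,|f(u)|$ exactly for every finite $u$, is precisely the intended argument; the separate handling of erasing images is a sensible precaution. One small imprecision: for infinite $u$, convergence of the frequencies of \emph{prefixes} of $f(u)$ does not by itself yield the \emph{uniform} frequency as defined in the paper (which quantifies over all factors of each length), but your sandwich argument applies verbatim to an arbitrary factor of $f(u)$ of length $n$ --- it contains and is contained in images $f(v)$, $f(v')$ of factors of $u$ up to $O(1)$ letters, each with $1$-density exactly $C$ --- so the count of $1$s in any length-$n$ factor is $Cn+O(1)$ and the uniform frequency is $C$.
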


On the other hand, if $f$ is non-degenerate (hence it is non-erasing), there is a one-to-one correspondence between frequencies of a word and its image. This can be seen
as follows: The \emph{adjacency matrix} $M_f$ of $f$ is defined as
\begin{equation*}
M_f = \left(\begin{matrix} |f(0)|_0 & |f(1)|_0 \\ |f(0)|_1 & |f(1)|_1 \end{matrix} \right).
\end{equation*}
It is straighforward to check that $\Psi( f(u))^{\top} = M_f \Psi(u)^{\top}$ (where $\Psi(u) = (|u|_0,|u|_1)$). Furthermore, $M_f$ is invertible if and only if $f$ is non-degenerate. Hence, for any non-degenerate morphism $f$ and an image
word $f(u)$, we can compute $\Psi(u)$ from $M_f^{-1} \Psi(f(u))^{\top}$.
We may also compute
$\left(\begin{smallmatrix} \freq[f(u)]{0} \\ \freq[f(u)]{1} \end{smallmatrix}\right)
= \frac{|u|}{|f(u)|} M_f \left(\begin{smallmatrix} \freq[u]{0} \\ \freq[u]{1} \end{smallmatrix}\right)$.

\begin{lemma}\label{lem:preimageIrrationalLetterFreq}
Let $f: \{0,1\}^* \to \{0,1\}^*$ be a morphism, and let $\infw{y} \in \{0,1\}^*$. Assume that
$f(\infw{y}) = \infw{z}$ has irrational uniform letter frequencies. Then $\infw{y}$ has irrational uniform letter frequencies. Furthermore,
if $\infw{z}$ is non-balanced, then so is $\infw{y}$.
\end{lemma}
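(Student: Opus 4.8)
The plan is to argue contrapositively on both assertions, exploiting the correspondence between letter frequencies of a word and those of its morphic image that was set up just before the statement. First I would dispose of the degenerate case: if $f$ is degenerate, then by the lemma preceding the statement $\freq[f(\infw{y})]{1}$ is a rational constant, contradicting the hypothesis that $\infw{z} = f(\infw{y})$ has irrational uniform letter frequency of $1$. (In particular a degenerate $f$ cannot even occur here, so we may assume $f$ is non-degenerate, hence non-erasing.)

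For the non-degenerate case I would use the matrix identity $\Psi(f(u))^{\top} = M_f\Psi(u)^{\top}$ with $M_f$ invertible. The key point is that frequencies of $\infw{y}$ are controlled by those of $\infw{z}$ via $M_f^{-1}$. Concretely, for each $n$ let $u_n$ be a length-$n$ prefix of $\infw{y}$; then $f(u_n)$ is a prefix of $\infw{z}$ of length $|f(u_n)|$, and $\Psi(u_n)^{\top} = M_f^{-1}\Psi(f(u_n))^{\top}$. Dividing by $n = |u_n|$ and using $|f(u_n)|/n \to c$ for a positive constant $c$ (this limit exists and is positive because $\infw{z}$ has uniform frequencies and $f$ is non-erasing; $c = \freq[\infw{y}]{0}|f(0)| + \freq[\infw{y}]{1}|f(1)|$ once we know $\infw{y}$ has uniform frequencies — alternatively one argues directly that $|f(u_n)|/n$ is squeezed between $\min(|f(0)|,|f(1)|)$ and $\max(|f(0)|,|f(1)|)$), we get that $\bigl(\freq[\infw{y}]{0},\freq[\infw{y}]{1}\bigr)$ exists as a limit and equals $c\, M_f^{-1}\bigl(\freq[\infw{z}]{0},\freq[\infw{z}]{1}\bigr)^{\top}$. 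Since $M_f^{-1}$ has rational entries and $\bigl(\freq[\infw{z}]{0},\freq[\infw{z}]{1}\bigr)$ has an irrational entry while the other is $1$ minus it, the resulting vector cannot be rational, so $\infw{y}$ has irrational uniform letter frequency. A small technical care point: to conclude that $\freq[\infw{y}]{1}$ genuinely \emph{exists} (not just $\limsup$/$\liminf$) I would instead run the argument through $\supfreq[\infw{y}]{1}$ and $\inffreq[\infw{y}]{1}$: if these differed, then passing through $M_f$ they would force $\supfreq[\infw{z}]{1} \neq \inffreq[\infw{z}]{1}$, contradicting uniform frequency of $\infw{z}$; this uses that applying a non-erasing morphism preserves the relative ordering of weights among factors of a common length up to bounded additive error, which the block-counting estimates of \autoref{sec:rational}-style already make routine.

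For the balance assertion I would again argue by contraposition: suppose $\infw{y}$ is $C$-balanced for some constant $C$. I claim $f(\infw{y})$ is then $C'$-balanced for some $C'$. Take two factors $v, v'$ of $\infw{z} = f(\infw{y})$ with $|v| = |v'|$. Each of $v$, $v'$ can be written as $r f(w) s$ with $r$ a proper suffix of some $f(a)$, $s$ a proper prefix of some $f(b)$, and $w$ a factor of $\infw{y}$; here $|r|, |s| < \max(|f(0)|,|f(1)|) =: L$. Comparing $|v|_1$ with $|v'|_1$, the contributions of $r$ and $s$ differ by at most $2L$ in total, and $\bigl||f(w)|_1 - |f(w')|_1\bigr|$ is bounded in terms of $\bigl||w|_0 - |w'|_0\bigr|$ and $\bigl||w|_1 - |w'|_1\bigr|$ via $M_f$, each of which is at most $C$ by $C$-balancedness of $\infw{y}$ — provided $|w|$ and $|w'|$ are comparable, which they are up to an additive constant since $|f(w)| = |v| - |r| - |s|$ and $|f(w')| = |v'| - |r'| - |s'|$ agree up to $2L$, and $|f(w)|$ determines $|w|$ up to $\pm 1$ when the letter lengths coincide, or more generally up to a bounded error. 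Hence $\bigl||v|_1 - |v'|_1\bigr|$ is bounded by a constant depending only on $f$ and $C$, so $\infw{z}$ is $C'$-balanced, contradicting non-balancedness of $\infw{z}$. Thus $\infw{y}$ is non-balanced.

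I expect the main obstacle to be the bookkeeping in the balance argument, specifically handling the fact that $f$ need not be length-uniform: when $|f(0)| \neq |f(1)|$, equal-length factors $v, v'$ of $\infw{z}$ need not decompose over the \emph{same number} of $f$-blocks, so the factors $w, w'$ of $\infw{y}$ one extracts need not have equal length, only lengths differing by a bounded amount. One must then invoke \autoref{lem:continuity} (continuity of abelian complexity) or a sliding-window argument to replace $w'$ by a genuinely equal-length factor of $\infw{y}$ while changing its weight by at most a constant, before applying $C$-balancedness. Everything else is routine linear algebra with the rational matrix $M_f$ and its inverse.
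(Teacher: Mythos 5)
Your proposal follows essentially the same route as the paper's proof: rule out degenerate $f$ via the preceding lemma, push the sup- and inf-frequencies of $\infw{y}$ through the invertible rational matrix $M_f$ (along factor sequences witnessing each, using injectivity of the resulting fractional linear map) to get existence and irrationality, and prove the balance claim contrapositively by decomposing equal-length factors of $\infw{z}$ over $f$-blocks and bounding the boundary contributions. The one step that is not justified as written is the claim that $|f(w)|$ determines $|w|$ ``more generally up to a bounded error'': for a non-degenerate, non-uniform morphism this is false for arbitrary words (take $f(0)=0$, $f(1)=011$; then $0^{3k}$ and $1^{k}$ have images of equal length while their own lengths differ by $2k$). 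It becomes true only once you invoke the $C$-balancedness of $\infw{y}$ itself, which is exactly how the paper closes this point: writing the longer extracted factor as $x'z$ with $|x'|$ equal to the length of the shorter one $y$, the identity $|f(x)|-|f(y)| = |f(z)| + \langle M_f(\Psi(x')-\Psi(y)),(1,1)\rangle$ together with the $C$-balance bound on the entries of $\Psi(x')-\Psi(y)$ forces $|z|$ to be bounded (using that $f$ is non-erasing). Since you are arguing contrapositively and may assume $\infw{y}$ is $C$-balanced, this patch is available to you, and with it the rest of your argument --- the sliding-window adjustment to equal lengths and the final bound on $\bigl||v|_1-|v'|_1\bigr|$ --- matches the paper's.
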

\begin{proof}
Let $\freq[f(\infw{y})]{1} = \alpha$ be irrational. Observe that, for a degenerate morphism $f$, $f(\infw{y})$ has rational uniform letter
frequencies, as can be established by the above lemma. Hence $f$ is non-degenerate and, in particular, non-erasing.

Let $\supfreq[\infw{y}]{1} = \beta$ and
$\inffreq[\infw{y}]{1} = \beta'$ for some
numbers $\beta, \beta' \in [0,1]$. Let $(v_n)_n$ be a sequence of factors of increasing length of $\infw{y}$
testifying the former limit frequency. Then we have
$\lim_{n\to\infty} \freq[f(v_n)]{1} = \alpha$.
On the other hand
\begin{equation*}
\left( \begin{matrix} \freq[f(v_n)]{0} \\ \freq[f(v_n)]{1} \end{matrix}\right)
= \frac{|v_n|}{|f(v_n)|} M_f
\left(\begin{matrix} \freq[v_n]{0} \\ \freq[v_n]{1} \end{matrix}\right).
\end{equation*}
By a straightforward computation, we have
$\lim_{n\to \infty} \frac{|v_n|}{|f(v_n)|} = \frac{1}{|f(0)| + (|f(1)| - |f(0)|) \beta} \in (0,1)$ under our assumption on $(v_n)_n$.
Since the mapping $M_f$ is continuous, we deduce that
\begin{equation*}
\left(\begin{matrix} 1 - \alpha \\ \alpha \end{matrix}\right)
= \frac{1}{|f(0)| + (|f(1)| - |f(0)|) \beta} M_f \left(\begin{matrix} 1-\beta \\ \beta \end{matrix}\right)
\end{equation*}
It is immediate that $\beta$ is irrational. Further, we get
$M_f^{-1}\left(\begin{smallmatrix} 1 - \alpha \\ \alpha \end{smallmatrix}\right)
= \tfrac{1}{|f(0)| + (|f(1)| - |f(0)|) \beta} \left(\begin{smallmatrix} 1-\beta \\ \beta \end{smallmatrix}\right)$.
Notice now that the same computations can be performed on the sequence of factors testifying the latter limit frequency $\beta'$, only $\beta$ is replaced with $\beta'$. As a consequence, we have $h(\beta) = h(\beta')$ for the
linear fractional transformation $h(x) = \frac{x}{|f(0)| + (|f(1)| - |f(0)|) x}$. One can check that $h$ is invertible (since $f$ is non-vanishing), so we conclude that $\beta = \beta'$.
We have shown that $\infw{y}$ has irrational uniform letter frequencies.

We then show that if $\infw{y}$ is $C$-balanced for some $C$, then necessarily $\infw{z}$
is $C'$-balanced for some $C'$. Let $u$ and $v$ be equal length factors of $\infw{z}$.
There exist factors $x$, $y$ of $\infw{y}$ of minimal length for which $u$ is a factor of
$f(x)$ and $v$ is a factor of $f(y)$. As the length of $f(x)$ is bounded below by $|u|$ and above by $|u| + D$ by some constant $D$ (take, e.g., $D = 2|f(01)|$), we have
$0 \leq |f(x)|_1 - |u|_1 \leq D$. Similarly $0 \leq |f(y)|_1 - |v|_1 \leq D$.
Thus, establishing a uniform bound on $||f(x)|_1 - |f(y)|_1|$, (where $x$ and $y$ correspond to
equal length factors of $\infw{z}$) suffices to conclude the claim. Assume without loss of generality that $|x| \geq |y|$, and write $x = x'z$ with $|x'|=|y|$. Hence $\Psi(x) = \Psi(x') + \Psi(z)$. 
Observe now that
\begin{equation*}
D \geq |f(x)| - |f(y)|
=  |f(z)| + |f(x')| - |f(y)|
= |f(z)| + \langle M_f(\Psi(x') - \Psi(y))\, , (1,1) \rangle,
\end{equation*}
where $\langle \cdot\, , \cdot \rangle$ denotes the inner product.
Recall we assume that $\infw{y}$ is $C$-balanced. Hence the elements of $\Psi(x') - \Psi(y)$ have
absolute value bounded by $C$. There are finitely many such integral points, and hence we have
$\langle M_f(\Psi(x') - \Psi(y'))\,, (1,1) \rangle$ is in some bounded interval $[-D',D']$ for
some positive number $D'$ which depends on $f$ and $C$ alone. Therefore the right hand side is
bounded below by $ |z| - D'$ (since $f$ is non-erasing). We conclude that
$|z| \leq D + D'$.

Similarly we have
\begin{equation*}
|f(x)|_1 - |f(y)|_1 =  |f(z)|_1 + \langle M_f(\Psi(x') - \Psi(y)), (0,1) \rangle.
\end{equation*}
Again, the value $\langle M_f(\Psi(x') - \Psi(y)), (0,1) \rangle$ is uniformly bounded due to
the $C$-balancedness of $\infw{y}$. Here $|f(z)|_1$ is (crudely) bounded above by
$(D + D')|f(01)|$, so we conclude
that $|f(x)|_1 - |f(y)|_1 \leq (D + D') |f(01)| + D''$,
for a constant $D''$ depending solely on $f$ and $C$. This concludes the proof.
\end{proof}

We have now developed sufficient tools to prove
\autoref{prop:non-balancedAbelianSubshift}.

In what follows, $\infw x$ is a uniformly recurrent, non-balanced binary word having
irrational letter frequencies. We may assume that $\freq[\infw{x}]{1} < 1/2$ and, without
loss of generality, all $1$s are isolated in $\infw{x}$. Otherwise, by
\autoref{prop:finitelyManyIterationsAll1sIsolated}, there exists a word in
$\abclsr{\infw{x}}$ with this property, and we may argue about its abelian closure.

Our aim is to define, for each $n\geq 0$, a uniformly recurrent word $\infw x_n$
in $\abclsr{\infw x}$. These words define pairwise distinct shift orbit closures
in $\abclsr{\infw x}$, which suffices for the claim.

For the construction, we actually define three sequences of words,
$(\infw{x}_n)_{n\geq 0}$, $(\infw{y}_n)_{n\geq 1}$, and $(\infw{z}_n)_{n\geq 0}$, as well as two sequences $(\psi_n)_{n\geq 0}$ and
$(\varphi_n)_{n\geq 1}$ of standard morphisms recursively. This will help to keep track of
the properties we need for the conclusion.
The entities satisfy the following properties for all $n\geq 0$.
\begin{enumerate}[topsep=2pt,itemsep=-2pt]
\item
\label{it:ynp1Props}
$\infw{y}_{n+1}$ is non-balanced, uniformly recurrent, has $\freq[\infw{y}_n]{1}$
irrational and less than $1/2$, and it contains both $00$ and $11$.

\item
\label{it:phipsidef}
$\psi_{n+1} = \psi_n \circ \varphi_{n+1}$ and $\varphi_{n+1}$ is a non-trivial
(meaning $\varphi_{n+1}(01)\geq 3$)
standard morphism.

\item
\label{it:znIsPhinp1Ynp1}
$\infw{z}_n = \varphi_{n+1}(\infw{y}_{n+1})$.

\item
\label{it:znNonbalancedEtc}
$\infw{z}_n$ is non-balanced, uniformly recurrent, has $\freq[\infw{z}_n]{1}$
irrational and less than $1/2$. Further, all $1$s are isolated.

\item
\label{it:znp1InASsYnp1}
$\infw{z}_{n+1} \in \abclsr{\infw{y}_{n+1}}$.

\item
\label{it:imageZnp1InASsZn}
$\varphi_{n+1}(\infw{z}_{n+1}) \in \abclsr{\infw{z}_n}$.

\item
\label{it:xnIsPsiZn}
$\psi_{n}(\infw{z}_n) = \infw{x}_n$.

\item
\label{it:xnInASsX}
$\infw{x}_n \in \abclsr{\infw{x}}$.
\end{enumerate}

First we set $\infw{x}_0 = \infw{z}_0 = \infw{x}$, and $\psi_0 = id$. The above list of
properties concerning these entities hold immediately. The definitions of $\infw{y}_1$ and
$\varphi_1$ are evident from the construction that follows. The construction is depicted
in \autoref{fig:constructionOfInfinitelyManyMinimalSubshifts}.

\begin{figure}
\begin{center}
\begin{tikzpicture}[x=2.5cm,y=1.2cm]

\node (x0) at (0,0) {$\infw{x}_0=\infw{z}_0$};

\node (y1) at (0,1) {$\infw{y}_1$};

\draw[->,dashed] (x0) edge node[left] {\footnotesize{$\varphi_1^{-1}$}} (y1);

\node (x1) at (1,0) {$\infw{x}_1$};

\node (z1) at (1,1) {$\infw{z}_1$};

\node (y2) at (1,2) {$\infw{y}_2$};

\draw[->,dashed] (y1) edge node[above] {\footnotesize{$F^{n_1}$}} (z1);

\draw[->,dashed] (z1) edge node[left] {\footnotesize{$\psi_1$}} (x1);

\draw[->,dashed] (z1) edge node[left] {\footnotesize{$\varphi_2^{-1}$}} (y2);

\node (x2) at (2,0) {$\infw{x}_2$};

\node (z2) at (2,2) {$\infw{z}_2$};

\node (y3) at (2,3) {$\infw{y}_3$};

\draw[->,dashed] (y2) edge node[above] {\footnotesize{$F^{n_2}$}} (z2);

\draw[->,dashed] (z2) edge node[left] {\footnotesize{$\psi_2$}} (x2);

\draw[->,dashed] (z2) edge node[left] {\footnotesize{$\varphi_3^{-1}$}} (y3);

\node (x3) at (3,0) {$\infw{x}_3$};

\node (z3) at (3,3) {$\infw{z}_3$};

\node (y4) at (3,4) {$\infw{y}_4$};

\draw[->,dashed] (y3) edge node[above] {\footnotesize{$F^{n_3}$}} (z3);

\draw[->,dashed] (z3) edge node[left] {\footnotesize{$\psi_3$}} (x3);

\draw[->,dashed] (z3) edge node[left] {\footnotesize{$\varphi_4^{-1}$}} (y4);

\node (a) at (3.7,0) {$\cdots$};

\node (b) at (3.7,3.8) {\reflectbox{$\ddots$}};

\node (b) at (3.7,4.8) {\reflectbox{$\ddots$}};

\node (xm) at (5,0) {$\infw{x}_{n}$};

\node (zm) at (5,5) {$\infw{z}_n$};

\node (ym) at (5,6) {$\infw{y}_{n+1}$};

\node (em) at (4,5) {$\infw{y_n}$};

\draw[->,dashed] (em) {} edge node[above] {\footnotesize{$F^{m_n}$}} (zm);

\draw[->,dashed] (zm) edge node[left] {\footnotesize{$\psi_n$}} (xm);

\draw[->,dashed] (zm) edge node[left] {\footnotesize{$\varphi_{n+1}^{-1}$}} (ym);

\end{tikzpicture}
\end{center}
\caption{A diagram depicting the relationship of the families of infinite words
$(\infw{x}_n)_{n\geq 0}$, $(\infw{y}_{n\geq 1})$, and $(\infw{z}_{n\geq 0})$, and standard
morphisms $(\psi_n)_{n\geq 0}$ and $(\varphi_n)_{n\geq 1}$. Here $F$ is the operation
defined in \autoref{def:F-operationCA}, and $n_m$ is the integer $n$ alluded to in
\autoref{prop:finitelyManyIterationsAll1sIsolated}.
}
\label{fig:constructionOfInfinitelyManyMinimalSubshifts}
\end{figure}
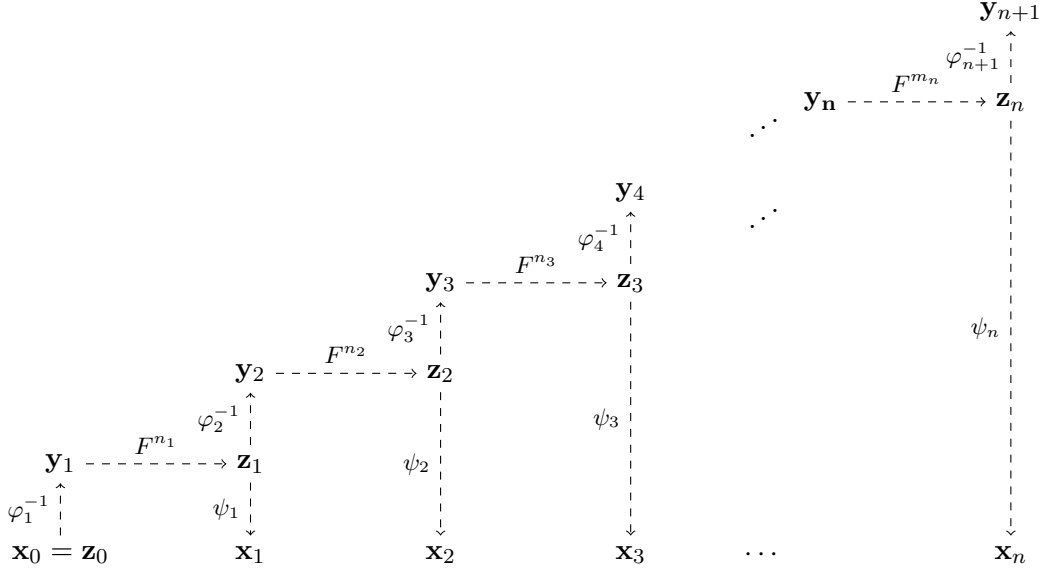

Assume then that $\infw{x}_n$, $\infw{z}_n$, $\psi_n$ are defined and satisfy the above
properties. We shall construct $\infw{y}_{n+1}$ and $\varphi_{n+1}$ from these entities,
so the knowledge of $\infw{y}_n$ and $\varphi_n$ are not needed. Let us do this first.
Since $\infw{z}_n$ is non-balanced and uniformly recurrent, by
\autoref{lem:factorizationOfShift} there exists a standard pair $(x,y)$ such that a shift
of $\infw{z}_n$ is a product of the words $x,y$ and contains both $xx$ and $yy$ in the
factorization. Let us denote this shift by $\infw{z}'$. Consider the morphism $\varphi$
defined by $0 \mapsto x$ and $1 \mapsto y$. As $\infw{z}'$ is a product of $x$ and $y$,
there exists a word $\infw{y}$ such that $\varphi(\infw{y}) = \infw{z}'$. Observe now that
$\infw{y}$ contains both $00$ and $11$. Because $\infw{z}'$ has irrational uniform letter
frequencies and is non-balanced, $\infw{y}$ shares these properties by
\autoref{lem:preimageIrrationalLetterFreq}. We may assume that $\freq[\infw{y}]{1} < 1/2$,
otherwise we replace $\varphi$ by $\varphi \circ E$. Now set $\infw{y}_{n+1} = \infw{y}$
and $\varphi_{n+1} = \varphi$, which is a standard morphism by
\autoref{thm:AdeLucaStandarMorphism}. It is also non-trivial, since $\infw{y}_{n+1}$
contains both $00$ and $11$ while $\infw{z}_n$ does not. We have thus established
items \ref{it:ynp1Props}, \ref{it:phipsidef}, and \ref{it:znIsPhinp1Ynp1} in the above
list. For the remainder of the construction, we omit the subscript from $\infw{y}_{n+1}$
for the sake of readability.

We then define $\infw{z}_{n+1}$. By \autoref{cor:AbelianSsNonBalancedContainsIsolated1}
there exists a non-balanced word in $\abclsr{\infw{y}}$ in which all $1$s are
isolated. We set $\infw{z}_{n+1}$ to be such a word. Now $\infw{z}_{n+1}$ is uniformly
recurrent, has irrational uniform letter frequencies with $\freq{1} < 1/2$ and is
non-balanced, as $\infw{y}$ has these properties. These observations establish
\autoref{it:znNonbalancedEtc} and \autoref{it:znp1InASsYnp1}. Further, since
$\infw{z}_{n+1} \in \abclsr{\infw{y}_{n+1}}$, by
\autoref{prop:AbSsClosedUnderSturmian} it follows that
$\varphi_{n+1}(\infw{z}_{n+1}) \in \abclsr{\varphi_{n+1}(\infw{y})}
= \abclsr{\infw{z}_{n}}$.
This establishes \autoref{it:imageZnp1InASsZn}. We finally define
$\infw{x}_{n+1} = \psi_{n+1}(\infw{z}_{n+1})$ in accordance with \autoref{it:xnIsPsiZn}.

Let us show that $\infw{x}_{n+1}$ satisfies \autoref{it:xnInASsX}. By
\autoref{prop:AbSsClosedUnderSturmian}, we have that
$\varphi_{n+1}(\infw{z}_{n+1}) \in \abclsr{\varphi_{n+1}(\infw{y})}
= \abclsr{\infw{z}_n}$.
Applying \autoref{prop:AbSsClosedUnderSturmian} again, this time to
$\varphi_{n+1}(\infw{z})$ and $\infw{z}_n$ with $\psi_n$, we find
\begin{equation*}
\infw{x}_{n+1} = \psi_{n+1}(\infw{z}_{n+1})
= \psi_n \circ \varphi_{n+1}(\infw{z}_{n+1})
\in \abclsr{\psi_n(\infw{z}_n)}
= \abclsr{\infw{x}_n}
\subseteq \abclsr{\infw{x}},
\end{equation*}
as $\infw{x}_n$ satisfies \autoref{it:xnInASsX} was assumed.

The following lemma combined with \autoref{it:xnInASsX} proves
\autoref{prop:non-balancedAbelianSubshift} immediately.
\begin{lemma}\label{it:xnXkDisjoint}
For all $m \neq n$, we have
$\soc{\infw{x}_n} \cap \soc{\infw{x}_m} = \emptyset$.
\end{lemma}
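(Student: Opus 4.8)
The plan is to attach to each $\infw{x}_n$ a numerical invariant of its language that takes a different value for each $n$. Since $\infw{x}_n=\psi_n(\infw{z}_n)$ (\autoref{it:xnIsPsiZn}) is the image of the uniformly recurrent word $\infw{z}_n$ (\autoref{it:znNonbalancedEtc}) under the non-erasing morphism $\psi_n$, it is itself uniformly recurrent, so every $\soc{\infw{x}_n}$ is minimal; as two minimal subshifts are equal or disjoint, it suffices to show that the languages $\lang{\infw{x}_n}$ are pairwise distinct. The invariant I would use is
\[
\beta(\infw{w}) \;=\; \min\{\, L \in \N \;:\; \lang[L]{\infw{w}}\ \text{is not balanced}\,\},
\]
the least length at which $\infw{w}$ has two factors whose weights differ by at least $2$; it depends only on $\lang{\infw{w}}$.

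The heart of the argument is the identity $\beta(\infw{x}_n)=|\psi_{n+1}(01)|$ for all $n\geq 0$. Recall from the construction that $\varphi_{n+1}$ was produced by applying \autoref{lem:factorizationOfShift} to $\infw{z}_n$: some shift of $\infw{z}_n$ is a product of the standard pair $\{\varphi_{n+1}(0),\varphi_{n+1}(1)\}$ in which both $\varphi_{n+1}(00)$ and $\varphi_{n+1}(11)$ occur as consecutive blocks. Applying the non-erasing standard morphism $\psi_n$ shows that some shift of $\infw{x}_n$ is a product of $\{\psi_{n+1}(0),\psi_{n+1}(1)\}=\{\psi_n\varphi_{n+1}(0),\psi_n\varphi_{n+1}(1)\}$ with both $\psi_{n+1}(00)$ and $\psi_{n+1}(11)$ occurring as consecutive blocks; moreover $\{\psi_{n+1}(0),\psi_{n+1}(1)\}$ is again an unordered standard pair because $\psi_{n+1}=\psi_n\circ\varphi_{n+1}=\varphi_1\circ\cdots\circ\varphi_{n+1}$ is a composition of standard morphisms, hence standard (\autoref{thm:AdeLucaStandarMorphism}, using that the standard morphisms form a monoid). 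Then \autoref{lem:shortestUnbalancedPairBalancedSet}, applied with the standard pair $\{\psi_{n+1}(0),\psi_{n+1}(1)\}$, gives $\beta(\infw{x}_n)\geq|\psi_{n+1}(01)|$. For the opposite inequality I would reproduce the final part of the proof of \autoref{lem:factorizationOfShift}: writing $w$ for the central word with $|w|=|\psi_{n+1}(01)|-2$ determined by the standard pair $\{\psi_{n+1}(0),\psi_{n+1}(1)\}$ (distinguishing the case where one member is a letter from the generic case $\psi_{n+1}(0),\psi_{n+1}(1)\notin\{0,1\}$), the presence of the consecutive blocks $\psi_{n+1}(00)$ and $\psi_{n+1}(11)$ in the factorization of $\infw{x}_n$ forces $0w0$ and $1w1$ to occur in $\infw{x}_n$, whence $\beta(\infw{x}_n)\leq|0w0|=|\psi_{n+1}(01)|$.

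It remains to verify that $n\mapsto|\psi_{n+1}(01)|$ is strictly increasing. From $\psi_{n+2}=\psi_{n+1}\circ\varphi_{n+2}$ we get $|\psi_{n+2}(01)|=c_0\,|\psi_{n+1}(0)|+c_1\,|\psi_{n+1}(1)|$ with $c_0=|\varphi_{n+2}(01)|_0$ and $c_1=|\varphi_{n+2}(01)|_1$; since the concatenation of the members of a standard pair always contains both letters, $c_0,c_1\geq 1$, and since $\varphi_{n+2}$ is non-trivial (\autoref{it:phipsidef}) we have $c_0+c_1=|\varphi_{n+2}(01)|\geq 3$. Hence
\[
|\psi_{n+2}(01)|-|\psi_{n+1}(01)|=(c_0-1)|\psi_{n+1}(0)|+(c_1-1)|\psi_{n+1}(1)|\geq\min(|\psi_{n+1}(0)|,|\psi_{n+1}(1)|)\geq 1,
\]
using that $\psi_{n+1}$ is non-erasing. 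Combining with the previous step, $\beta(\infw{x}_n)=|\psi_{n+1}(01)|<|\psi_{n+2}(01)|=\beta(\infw{x}_{n+1})$ for every $n$, so the numbers $\beta(\infw{x}_n)$ are pairwise distinct; by the reduction in the first paragraph, $\soc{\infw{x}_n}\cap\soc{\infw{x}_m}=\emptyset$ whenever $n\neq m$, which proves the lemma.

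The one genuinely delicate point is the upper bound $\beta(\infw{x}_n)\leq|\psi_{n+1}(01)|$: one must know that any infinite word obtained (up to a shift) as a product of a standard pair $\{x,y\}$ realizing both squares $xx$ and $yy$ already contains the unbalanced pair $0w0,1w1$ of length $|xy|$, where $w01$ is the associated standard word. This is essentially contained in the last paragraph of the proof of \autoref{lem:factorizationOfShift} (and, through the two coprime periods of the central word $w$, ultimately in Fine and Wilf's theorem), but it requires the small case analysis according to the shape of the standard pair. Everything else is routine bookkeeping with the properties \autoref{it:ynp1Props}--\autoref{it:xnInASsX} already established for the construction.
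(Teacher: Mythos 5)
Your proof is correct and takes essentially the same route as the paper: reduce to distinctness of languages via minimality of the $\soc{\infw{x}_n}$, use \autoref{lem:factorizationOfShift} and \autoref{lem:shortestUnbalancedPairBalancedSet} to pin the length of the shortest unbalanced pair of factors of $\infw{x}_n$ at exactly $|\psi_{n+1}(01)| = |x_ny_n|$, and verify that this quantity is strictly increasing in $n$. Your explicit two-sided treatment of the invariant $\beta$ (in particular, isolating the upper bound $\beta(\infw{x}_n)\leq|\psi_{n+1}(01)|$ as the delicate point, which the paper covers by citing \autoref{lem:factorizationOfShift}) is a slightly more careful write-up of the identical argument.
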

\begin{proof}
We show that the words have distinct factor sets. This suffices for the proof, since the words $\infw{x}_n$ are uniformly recurrent (by items \ref{it:xnIsPsiZn} and \ref{it:znNonbalancedEtc}). Consider a fixed index $n \geq 0$. Combining items \ref{it:xnIsPsiZn},
\ref{it:znIsPhinp1Ynp1}, and \ref{it:phipsidef}, we have
$\infw{x}_n = \psi_n(\infw{z}_n)
= \psi_n(\varphi_{n+1}(\infw{y}_{n+1}))
= \psi_{n+1}(\infw{y}_{n+1})$.
By \autoref{it:ynp1Props}, $\infw{y}_{n+1}$ contains both $00$ and
$11$. Thus $\infw{x}_n$ is a product of the factors $\psi_{n+1}(0) = x_n$ and
$\psi_{n+1}(1) = y_n$, and both $x_n x_n$ and $y_n y_n$ occur in $\infw{x}_{n}$. By
\autoref{thm:AdeLucaStandarMorphism} $(x_n,y_n)$ (or $(y_n,x_n)$) is a standard pair.
Further, by \autoref{lem:factorizationOfShift} the shortest unbalanced pair of factors has
length $|x_n y_n|$, and by \autoref{lem:shortestUnbalancedPairBalancedSet}, the factors of length
less than $|x_n y_n|$ form a balanced set.

To conclude the proof, it suffices to show that $|x_{n+1} y_{n+1}| > |x_n y_n|$ for all
$n \geq 0$. By \autoref{it:phipsidef}, we find
$\psi_{n+1} = \psi_n \circ \varphi_{n+1}$. We note that $|x_{n+1}y_{n+1}|_0$,
$|x_{n+1}y_{n+1}|_1 \geq 1$. Furthermore, one of these values is at least $2$ since
$\varphi_{n+1}$ is non-trivial. We thus have
\begin{equation*}
|x_{n+1}y_{n+1}| = |\psi_{n}(\varphi_{n+1}(01))|
= |x_{n+1}y_{n+1}|_0 \cdot |x_n| + |x_{n+1}y_{n+1}|_1 \cdot |y_n| > |x_n y_n|.
\end{equation*}
This concludes the proof.
\end{proof}

\section{Some remarks on alternative approaches}\label{sec:alt}
In this section we discuss some alternative approaches to the results presented in the
preceding sections. We first show that a large family of words with irrational letter frequencies contain uncountably many
minimal subshifts in their abelian closures. 
We then 
show that trying to apply to  \autoref{prop:non-balancedAbelianSubshift} the approaches from the proofs of the other cases does not give the result in the full generality, although shows stronger results in particular cases, as well as demonstrates some new phenomena. 

\subsection{On abelian closures with uncountably many minimal subshifts}

In this subsection we show that certain words $\infw{x}$ with irrational letter frequencies
have uncountably many minimal subshifts in their abelian closures. We apply methods from the
proof of \autoref{prop:rationalFreqUncountable}. Notice however, that this does not give a stronger version of Propositions \ref{Prop:C-bal} or \ref{prop:non-balancedAbelianSubshift}  in full generality.

\begin{proposition}\label{prop:wideStripUncountablyMany}
Let $\infw{x}$ be uniformly recurrent with $\freq[\infw{x}]{1} = \alpha$ with $\alpha$
irrational. Assume further that, for each $n \geq n_0$ for some $n_0 \in \N$, it
contains factors of length $n$ with one having weight $\lceil n\alpha \rceil + 1$ and another
having weight $\lfloor n\alpha \rfloor - 1$. Then $\abclsr{\infw{x}}$ contains uncountably
many minimal subshifts.
\end{proposition}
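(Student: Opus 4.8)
The plan is to follow the proof of \autoref{prop:rationalFreqUncountable}, but since there is no standard word of slope exactly $\alpha$, to replace the single morphism $0\mapsto w01$, $1\mapsto w10$ used there by the whole tower of standard words approximating $\alpha$, while inserting a binary choice at infinitely many scales so as to produce a continuum of words rather than one morphic image.

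After exchanging $0$ and $1$ if necessary (this preserves both the hypothesis, with $\alpha$ replaced by $1-\alpha$, and the conclusion), assume $\alpha<1/2$. Since $\infw{x}$ is uniformly recurrent with $\freq[\infw{x}]{1}=\alpha$, it has for every $n$ factors of length $n$ of weights $\lfloor n\alpha\rfloor$ and $\lceil n\alpha\rceil$, and by hypothesis together with \autoref{lem:continuity} it has, for every $n\geq n_0$, factors of every weight in $[\lfloor n\alpha\rfloor-1,\lceil n\alpha\rceil+1]$. Hence, by the \hyperref[lem:corridorLemma]{Corridor Lemma}, it suffices to produce a family $(\infw{y}_\theta)_{\theta\in\{0,1\}^{\N}}$ of uniformly recurrent words such that every $v\in\lang[n]{\infw{y}_\theta}$ satisfies $|v|_1\in\{\lfloor n\alpha\rfloor,\lceil n\alpha\rceil\}$ for $n<n_0$ and $|v|_1\in[\lfloor n\alpha\rfloor-1,\lceil n\alpha\rceil+1]$ for all $n$, and such that the map $\theta\mapsto\soc{\infw{y}_\theta}$ has uncountable image.

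For the construction, let $(S_n)_{n\geq-1}$ be the standard sequence of the characteristic Sturmian word $\infw{s}$ of slope $\alpha$, so that $\infw{s}=\lim_k\gamma_1\circ\cdots\circ\gamma_k(0)$ where the $\gamma_i$ are the morphisms built from $D$ and $G$ according to the directive sequence of $\alpha$; each composed block $\gamma_1\circ\cdots\circ\gamma_k(a)$ is a standard word, of slope a convergent $p_k/q_k$ of $\alpha$. Fix $j_0$ so large that $|\gamma_1\circ\cdots\circ\gamma_{j_0-1}(a)|\geq n_0$ for $a=0,1$. For $k\geq j_0$, let $\gamma_k'$ be obtained from $\gamma_k$ by conjugating one of its images --- replacing a standard word $w_k01$ occurring as an image by its conjugate $w_k10$ --- which has the same Parikh vector and therefore does not alter the adjacency matrix of $\gamma_k$; the conjugated occurrence is chosen inside a context forced by the Sturmian structure of the relevant intermediate level so that it creates no new length-two factor there. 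For $\theta=(\theta_k)_{k\geq j_0}$ set $\gamma_k^{(\theta)}=\gamma_k$ when $k<j_0$ or $\theta_k=0$, and $\gamma_k^{(\theta)}=\gamma_k'$ otherwise, and put $\infw{y}_\theta=\lim_k\gamma_1^{(\theta)}\circ\cdots\circ\gamma_k^{(\theta)}(0)$, the limit existing since $\alpha<1/2$ forces every image of $0$ to begin with $0$.

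The remaining work --- and the crux of the proof --- is to check three things. (i) Each $\infw{y}_\theta$ is uniformly recurrent, by the usual primitivity of such $S$-adic systems. (ii) Since the adjacency matrices are unchanged, each composed block $\gamma_1^{(\theta)}\circ\cdots\circ\gamma_k^{(\theta)}(a)$ still has slope $p_k/q_k$; hence $\infw{y}_\theta$ has slope $\alpha$, and as for $\infw{s}$ the deviation $g_{\infw{y}_\theta}(i)-\alpha i$ is controlled by contributions of size $O(\lVert q_k\alpha\rVert)=O(1/q_{k+1})$ from the successive levels, whose total is $<1$ once $j_0$ is large, so $|v|_1\in[\lfloor n\alpha\rfloor-1,\lceil n\alpha\rceil+1]$ for every $v\in\lang[n]{\infw{y}_\theta}$. (iii) For $n<n_0$ one has $\lang[n]{\infw{y}_\theta}=\lang[n]{\infw{s}}$: since the level-$(j_0-1)$ blocks have length $\geq n_0$, any factor of length $<n_0$ of $\infw{y}_\theta$ lies inside $\gamma_1\circ\cdots\circ\gamma_{j_0-1}(bc)$ for some length-two factor $bc$ of the level-$(j_0-1)$ letter sequence of $\infw{y}_\theta$, and by the choice of conjugated contexts this $bc$ also occurs at level $j_0-1$ in $\infw{s}$, whence $v\in\lang{\infw{s}}$. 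By the reduction above, $\soc{\infw{y}_\theta}\subseteq\abclsr{\infw{x}}$ for every $\theta$. Finally, whether the conjugation was performed at level $k$ should be detectable in $\lang{\infw{y}_\theta}$ --- by uniform recurrence a specific factor of length $\Theta(q_k)$ belongs to it precisely when $\theta_k=1$ --- so $\soc{\infw{y}_\theta}=\soc{\infw{y}_{\theta'}}$ forces $\theta=\theta'$, and $\{0,1\}^{\N}$ being uncountable this yields uncountably many minimal subshifts inside $\abclsr{\infw{x}}$. The main obstacle is arranging the conjugations at the successive scales so that (ii), (iii) and this last detectability hold simultaneously, since a conjugation at level $k$ disturbs all coarser levels; controlling that interaction across scales is where the difficulty lies.
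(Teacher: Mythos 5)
Your overall strategy---perturb a slope-$\alpha$ Sturmian word by standard-pair conjugations and use the extra room $\pm 1$ granted by the hypothesis via the \hyperref[lem:corridorLemma]{Corridor Lemma}---is the right one, and your reduction in the second paragraph is correct. But the execution has a genuine gap at its central step, which you yourself flag as ``where the difficulty lies'' without resolving it. The problem is quantitative: a conjugation $w01\mapsto w10$ performed at level $k$ and pushed down through $\gamma_1\circ\cdots\circ\gamma_{k-1}$ becomes, in the final binary word, a swap of two \emph{adjacent letters} (since for a standard pair $(u,v)$ the words $uv$ and $vu$ differ only in their last two positions), i.e.\ a bump of exactly $\pm 1$ in the graph $g$ at one position per affected occurrence. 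This contribution does \emph{not} decay like $O(\lVert q_k\alpha\rVert)=O(1/q_{k+1})$ as you claim; it is a full unit at every scale. Consequently, if conjugations at two different levels produce bumps of opposite sign (the sign depends on the parity of the level, since $S_{2n}$ ends in $10$ and $S_{2n+1}$ in $01$), or if bumps of the same sign stack at a single position, a factor whose endpoints hit two such positions changes weight by $2$ relative to the Sturmian word, leaving the corridor $[\lfloor n\alpha\rfloor-1,\lceil n\alpha\rceil+1]$. Your step (ii) therefore does not go through as stated, and making it go through is precisely the cross-scale bookkeeping you defer. A second unproved step is the injectivity of $\theta\mapsto\soc{\infw{y}_\theta}$: replacing a morphism in an $S$-adic expansion by a conjugate very often does \emph{not} change the generated subshift, so ``a specific factor of length $\Theta(q_k)$ belongs to $\lang{\infw{y}_\theta}$ precisely when $\theta_k=1$'' needs an actual argument, and without it you cannot conclude that the image is uncountable.

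The paper's proof avoids both problems by working at a \emph{single} scale. It fixes one $k$ with $|S_{k+1}|>n_0$, writes $\infw{c}=\prod_i S_k^{n_i}S_{k-1}$, and flips the last two letters of individual \emph{occurrences} of $S_{k-1}$ (one binary choice per occurrence, not per scale), so every factor of the perturbed word differs in weight from the corresponding factor of $\infw{c}$ by at most $1$; short factors are handled by \autoref{lem:shortestUnbalancedPairBalancedSet} since the perturbed word is still a product of a standard pair. Distinctness is then obtained not combinatorially but via frequencies: the set of flipped occurrences is prescribed by a modified rotation coding depending on a subinterval $J$ of the torus, the frequency of the factor $1w1$ equals the measure of the corresponding set and varies continuously with $J$, and words with different frequencies of a common factor have different languages. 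If you want to salvage your multi-scale construction, you would need (a) to ensure all conjugations push down to unit bumps of the same sign at pairwise non-interacting positions, and (b) a frequency- or density-type invariant separating the subshifts; at that point you would essentially have reconstructed the paper's single-scale argument.
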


We may assume without loss of generality that $\alpha < 1/2$. We construct a family of words in $\abclsr{\infw{x}}$ as follows.

Let $\infw{c}$ be the characteristic Sturmian word of slope $\alpha$. Let $(a_n)_{n\geq 1}$ be
the corresponding directive sequence, and $(S_n)_{n \geq -1}$ the standard sequence. Recall
that $S_n = S_{n-1}^{a_n} S_{n-2}$ for each $n\geq 1$. We shall consider a modification of this sequence as follows.

Notice that $\infw{c} \in \abclsr{\infw{x}}$ by the \hyperref[lem:corridorLemma]{Corridor Lemma}. We aim to "spread" the
graph of $\infw{c}$ around the line $y = \alpha x$ so that the obtained word is also in
$\abclsr{\infw{x}}$. This is the part where we need extra room around the slope $\alpha x$,
which is granted by the assumptions. To this end, let $k \geq 3$ be such that $|S_{k+1}| = |S_k^{a_{k+1}}S_{k-1}| > n_0$ (notice that $|S_{k-1}|\geq 2$ for $k \geq 3$). Now $\infw{c}$
is a product of the words $S_k$ and
$S_{k-1}$: let us write
\begin{equation}\label{eq:sturmianProduct}
\infw{c} = \prod_{i=0}^{\infty} S_k^{n_i} S_{k-1}.
\end{equation}
Here $n_i$ is one of the two numbers $a_{k+1}$, $a_{k+1}+1$, for each $i\geq 0$.
Let $\mathfrak{F}$ denote the operation which flips the last two letters of a given word (of length at least two).
\begin{claim}\label{claim:0-1-flippings}
Let $(b_i)_{i\geq 0}$ be a $0$-$1$-sequence. Then the word $\infw{x}'$ defined by
\begin{equation}\label{eq:sturmianProductMod}
\infw{x}' = \prod_{i=0}^{\infty} S_k^{n_i} \mathfrak{F}^{b_i}(S_{k-1})
\end{equation}
is in $\abclsr{\infw{x}}$.
\end{claim}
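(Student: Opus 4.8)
The plan is to prove the membership $\infw{x}'\in\abclsr{\infw{x}}$ directly via the \hyperref[lem:corridorLemma]{Corridor Lemma}: I must show that for every $n$,
$\min\{|v|_1:v\in\lang[n]{\infw{x}'}\}\ge\min\{|v|_1:v\in\lang[n]{\infw{x}}\}$ and $\max\{|v|_1:v\in\lang[n]{\infw{x}'}\}\le\max\{|v|_1:v\in\lang[n]{\infw{x}}\}$.
On the $\infw{x}$-side I have two facts: since $\freq[\infw{x}]{1}=\alpha$, the subadditivity estimates of \autoref{sec:preliminaries} give $\max\{|v|_1:v\in\lang[n]{\infw{x}}\}\ge\lceil n\alpha\rceil$ and $\min\{|v|_1:v\in\lang[n]{\infw{x}}\}\le\lfloor n\alpha\rfloor$ for all $n$, and by hypothesis these improve to $\lceil n\alpha\rceil+1$ and $\lfloor n\alpha\rfloor-1$ once $n\ge n_0$. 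So it suffices to establish: (a) every $v\in\lang[n]{\infw{x}'}$ satisfies $\lfloor n\alpha\rfloor-1\le|v|_1\le\lceil n\alpha\rceil+1$ for all $n$; and (b) for $n<n_0$, every $v\in\lang[n]{\infw{x}'}$ is in fact a factor of the characteristic word $\infw{c}$ of slope $\alpha$ (recall $\infw{c}\in\abclsr{\infw{x}}$, so then $\Psi(v)\in\ablang[n]{\infw{c}}\subseteq\ablang[n]{\infw{x}}$).

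For (a) I would compare the graph $g_{\infw{x}'}$ with $g_{\infw{c}}$. Each block $\mathfrak{F}^{b_i}(S_{k-1})$ is obtained from $S_{k-1}$ by flipping its last two letters — turning an occurrence of $01$ into $10$ or vice versa, according to the parity of $k$ — which modifies the graph at a single lattice point by $+1$ or $-1$; assume the ``$+1$'' case (the other is symmetric, exchanging the roles of $0$ and $1$). Because $n_i\ge a_{k+1}$ in \eqref{eq:sturmianProductMod}, the lattice points altered by two distinct flips are at distance at least $|S_k^{a_{k+1}}S_{k-1}|=|S_{k+1}|$, in particular distinct, so $g_{\infw{c}}(m)\le g_{\infw{x}'}(m)\le g_{\infw{c}}(m)+1$ for all $m$. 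Hence, if $v\in\lang[n]{\infw{x}'}$ occupies the window $[i+1,i+n]$ and $w_0$ is the $\infw{c}$-content of that window, then $|v|_1=g_{\infw{x}'}(i+n)-g_{\infw{x}'}(i)$ differs from $|w_0|_1$ by at most $1$; since $\infw{c}$ is Sturmian of slope $\alpha$ we have $|w_0|_1\in\{\lfloor n\alpha\rfloor,\lceil n\alpha\rceil\}$, which gives (a). This already settles all lengths $n\ge n_0$.

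For (b) it is convenient to take $k$ slightly larger: since $|S_m|\to\infty$ I may assume $|S_{k-1}|>n_0$ (then $|S_{k+1}|>n_0$ automatically). Now a window of length $n<n_0\le|S_{k-1}|$ meets at most one modified block, in at most two letters; if it meets none, $v\in\lang{\infw{c}}$ and we are done. Otherwise write $S_{k-1}=w01$ (resp.\ $w10$) with $w$ a central word; then $\mathfrak{F}^{b_i}(S_{k-1})$ is the \emph{conjugate} standard word $w10$ (resp.\ $w01$) by \autoref{lem:centralStandardPeriods}. Since $n\le|w|+1$, the window $v$ arises from its $\infw{c}$-counterpart $w_0$ by altering one letter at a boundary, and a short case check shows that in every case $v$ — or its reversal $v^R$ — has the form $a\cdot u$ with $a\in\{0,1\}$ and $u$ a prefix of $\infw{c}$ (here one uses that $w$ is a palindrome and a prefix of $\infw{c}$, and that $\infw{c}$ has no factor $11$ because $\alpha<1/2$). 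Every prefix $u$ of the characteristic word $\infw{c}$ is left special, i.e.\ $0u,1u\in\lang{\infw{c}}$, because $0\infw{c},1\infw{c}\in\soc{\infw{c}}$; so $a\cdot u\in\lang{\infw{c}}$, and, $\lang{\infw{c}}$ being closed under reversal, $v\in\lang{\infw{c}}$. This proves (b), and together with (a) the \hyperref[lem:corridorLemma]{Corridor Lemma} yields $\infw{x}'\in\abclsr{\infw{x}}$.

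The step I expect to be the real obstacle is (b). The graph estimate in (a) is inevitably one unit too weak — a single flip genuinely shifts the weight of some factor by $1$ — and this slack is harmless only for $n\ge n_0$, where the hypothesis supplies the extra room; for the finitely many shorter lengths one is pushed into analysing how the flipped blocks sit inside $\infw{c}$, and the device of choosing $k$ so large that $|S_{k-1}|>n_0$ is exactly what keeps those windows short enough for the analysis to collapse to left-speciality of prefixes of $\infw{c}$. (The degenerate possibility that $w$ is a power of a single letter, where \autoref{lem:centralStandardPeriods} does not literally apply, is treated separately and is easier, e.g.\ $\mathfrak{F}(0^{m+1}1)=0^{m}10$, which is a factor of $\infw{c}$ as soon as $0^{m}$ is.)
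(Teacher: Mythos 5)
Your argument is correct, and its first half is essentially the paper's: the paper also compares a factor $u$ of $\infw{x}'$ with the factor of $\infw{c}$ occupying the same window (via a decomposition $u=sPp$ into a sub-product plus boundary pieces rather than via the graph $g_{\infw{x}'}$, but the content is the same $\pm 1$ estimate), and also concludes via the \hyperref[lem:corridorLemma]{Corridor Lemma} together with the extra-room hypothesis for $n\ge n_0$. Where you genuinely part ways is the treatment of the short lengths $n<n_0$, which you correctly identify as the crux. The paper disposes of these in one structural stroke: since $S_k\mathfrak{F}(S_{k-1})=S_{k-1}S_k$, every word of the form \eqref{eq:sturmianProductMod} is a product of the standard pair $\{S_k^{a_{k+1}-1}S_{k-1},S_k\}$, so \autoref{lem:shortestUnbalancedPairBalancedSet} immediately gives that all factors of length less than $|S_{k+1}|>n_0$ form a balanced set, hence have weight in $\{\lfloor n\alpha\rfloor,\lceil n\alpha\rceil\}$. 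You instead run a hands-on analysis of how a short window can meet a flipped pair, reducing everything to the palindromicity of the central part of $S_{k-1}$, left-speciality of prefixes of $\infw{c}$, and reversal-closure of $\lang{\infw{c}}$; I checked the three window positions (pair fully inside, window ending at the first flipped letter, window starting at the second) and the case analysis does close. Two costs of your route are worth flagging: you must strengthen the choice of $k$ to $|S_{k-1}|>n_0$ (the claim as set up only assumes $|S_{k+1}|>n_0$; this is harmless, since any larger $k$ serves equally well in \autoref{prop:wideStripUncountablyMany} and in \autoref{prop:uncountably_manyMod}, but it is a deviation from the stated hypotheses), and the ``short case check'' is the one place where the work actually lives, so it should be written out rather than asserted — note also that in the case where the window contains both flipped letters your normal form $a\cdot u$ need not hold, though there plain abelian equivalence with the $\infw{c}$-counterpart already suffices. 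The paper's use of the standard-pair product structure buys a shorter and more uniform argument; your version is more elementary and self-contained, at the price of the extra bookkeeping.
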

\begin{proof}
Notice that $\{S_k,S_{k-1}\}$ is an unordered standard pair. Since $S_k S_{k-1}$ and $S_{k-1} S_k$
differ in only the last two letters (\cite[Prop.~2.2.2]{MR1905123}) we have
$S_{k} \mathfrak{F}(S_{k-1}) = S_{k-1}S_k$. It is then evident that any word of the form
\eqref{eq:sturmianProductMod} is a product of the standard pair $\{S_k^{a_{k+1}-1}S_{k-1},S_{k}\}$.
By \autoref{lem:shortestUnbalancedPairBalancedSet}, the set of factors of length
less than $|S_k^{a_{k+1}} S_{k-1}| = |S_{k+1}|$ forms a balanced set.

The rest of the proof is similar to that of \autoref{lem:rational}.
Assume for simplicity that $S_{k-1}$ ends with $01$ (equivalently, $k$ is odd, $k\geq 3$). The other case is totally symmetric.
Take a factor $u$ of
$\infw{x}'$ and express it as $u = s P p$, where $P$ is a finite sub-product of \eqref{eq:sturmianProductMod} and $s$ (resp., $p$) is a proper suffix (resp., prefix) of the previous (resp., following) term. Consider the corresponding factor $s'P'p'$ from $\infw{s}$.
We have $|u|_1 - |v|_1 = |s|_1 - |s'|_1 + |p|_1 - |p'|_1$. Notice that $||s|_1 - |s'|_1| \leq 1$, and $1$ is only attained with $s = 0$ (and thus $s' = 1$). Similarly $||p|_1 - |p'|_1| \leq 1$ and $1$ is attained only when $p = S_k^{n_i} \mathfrak{F}(S_{k-1})0^{-1}$. If both happen
simultaneously, then $|u|_1 - |v|_1 = 0$. Consequently, $||u|_1 - |v|_1| \leq 1$. Hence $\lfloor n\alpha \rfloor - 1 \leq |u|_1 \leq \lceil n\alpha \rceil + 1$.
By \autoref{lem:corridorLemma} and our assumptions on $\infw{x}$, $\infw{x}' \in \abclsr{\infw{x}}$.
\end{proof}

We are going to prove that there are uncountably many $0$-$1$-sequences $(b_n)_{n=0}^{\infty}$
such that the corresponding words of the form \eqref{eq:sturmianProductMod} have distinct sets
of factors. One of the ways to do this is using yet another characterization of Sturmian words
via rotations.

We identify the interval $[0,1)$ with the unit circle $\mathbb T$
(the point $1$ is identified with point $0$). For points
$x,y\in\mathbb T$, we let ${I}(x,y)$ denote the half-open interval
on $\mathbb T$ starting from $x$ and ending at
$y$ in counter--clockwise direction (in most of the arguments it will not be important which
endpoint is in the interval).
Let $\alpha \in \mathbb{T}$ be irrational and let $\rho\in \mathbb T$.
The map $R_{\alpha}:\mathbb T \to \mathbb T$, $x \mapsto
\{x+\alpha\}$, where $\{x\} = x-\lfloor x \rfloor$ denotes the
fractional part of $x\in\R$, defines a (counter-clockwise) rotation on $\mathbb T$.
Partition $\mathbb T$ into two half-open intervals $I_0 = I(0,1-\alpha)$ and
$I_1 = I(1-\alpha,1)$ (so the endpoints $1-\alpha$ and $0 = 1$ are in different partitions), and
define the coding $\nu:\mathbb T\to \{0,1\}$, $x\mapsto i$ if $x\in I_i$,
$i=0,1$. The \emph{rotation word} ${\infw s}_{\alpha,\rho}$ of \emph{slope} $\alpha$ and
\emph{intercept} $\rho$ is the word $a_0a_1\cdots \in \{0,1\}^{\N}$ defined by
$a_n = \nu(R_{\alpha}^n(\rho))$ for all
$n\in\N$.

Note that $00$ occurs in $\infw s_{\alpha,\rho}$ if
and only if $\alpha < 1/2$. Clearly, $\infw s_{\alpha,\rho}$ is aperiodic as $\alpha$ is
irrational. Each aperiodic rotation word is a Sturmian word and vice versa (regardless of the choice of whether $1 \in I_0$ or $0\in I_0$). For each length
$n$, one can partition the interval $[0,1)$ into $n+1$ subintervals, each of which corresponds
to a factor of the Sturmian word. More precisely, we can find when $v=b_1\cdots b_n$ occurs in
$\infw{s}$ at position $i$:
$$v = s_i s_{i+1} \cdots s_{i+n-1}  \Leftrightarrow  R_{\alpha}^i(\rho) \in I_v,$$
where
$$I_v = I_{b_1} \cap R_{\alpha}^{-1}(I_{b_2}) \cap \cdots \cap R_{\alpha}^{-n+1}(I_{b_{n}}).$$

\begin{example}
In \autoref{fig:SturmianRotation} we have an example of a rotation system corresponding
to a Sturmian word of slope $\alpha$. Here we assume that $3\alpha < 1 < 4\alpha$.
The intervals defined by the points $\{-i\alpha\}$, $i=0,\ldots,4$, define the factors
of length $4$ as follows:
$I(0,\{ -3\alpha \})$ corresponds to the factor $0^31$,
$I(\{ -3\alpha \},\{ -2\alpha \})$ corresponds to $0^210$,
$I(\{-2\alpha\},\{-\alpha\})$ corresponds to $0100$,
$I(\{ -\alpha \},\{ -4\alpha \})$ corresponds to $10^3$, and
$I(\{ -4\alpha \},1)$ corresponds to $1001$.
\end{example}
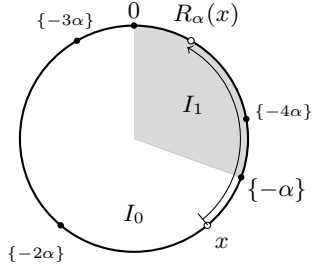
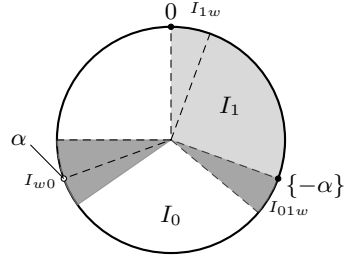
\begin{figure}
\centering
\begin{subfigure}{0.45\textwidth}
\centering
\begin{tikzpicture}
    \filldraw[gray,opacity=.3] (0,0) -- (-20:1.5) arc (-20:90:1.5);
    \draw[thick] (0,0) circle(1.5);
    
    \node at (270:1) {\footnotesize{$I_0$}};
    \node at (30:.9) {\footnotesize{$I_1$}};
    
    \filldraw[white,draw=black] (310:1.5) circle(1.2pt);
    \node at (310:1.8) {\footnotesize{$x$}};

    \filldraw[white,draw=black] ({310+110}:1.5) circle(1.2pt);
    \node at (310+110:1.9) {\footnotesize{$R_{\alpha}(x)$}};
    \draw[|->] (310:1.40) arc (310:310+110:1.40);

    \filldraw (90:1.5) circle(1pt);
    \node at (90:1.7) {\footnotesize{$0$}};
    
    \filldraw (340:1.5) circle(1pt);
    \node at (340:2) {\footnotesize{$\{-\alpha\}$}};

    \node at ({340-110}:2) {\tiny{$\{-2\alpha\}$}};
    \filldraw ({340-110}:1.5) circle(1pt);

    \node at ({340-2*110}:1.8) {\tiny{$\{-3\alpha\}$}};
    \filldraw ({340-2*110}:1.5) circle(1pt);
    
        \node at ({340-3*110}:2) {\tiny{$\{-4\alpha\}$}};
    \filldraw ({340-3*110}:1.5) circle(1pt);
    

\end{tikzpicture}
\caption{Illustration of a rotation word. The coding of the orbit of the point $x$
under $\nu$ begins with $01$.}
\label{fig:SturmianRotation}
\end{subfigure}
\quad
\begin{subfigure}{0.45\textwidth}
\centering
\begin{tikzpicture}
    \filldraw[gray,opacity=.3] (0,0) -- (-20:1.5) arc (-20:90:1.5);
    \draw[thick] (0,0) circle(1.5);

\filldraw[gray, opacity=.7] (0,0) -- (180:1.5) arc (180:215:1.5) -- (0,0);

    \draw[densely dashed] (0,0) -- (180:1.5) arc (180:200:1.5) -- (0,0);
    \node at (197.5:1.8) {\tiny{$I_{w0}$}};

    \draw[gray,draw = black,densely dashed] (0,0) -- (70:1.5) arc (70:90:1.5) -- (0,0);
    \node at (75:1.8) {\tiny{$I_{1w}$}};
    
    \filldraw[gray,draw = black, densely dashed,opacity=.7] (0,0) -- (320:1.5) arc (320:340:1.5) -- (0,0);
    \node at (330:1.8) {\tiny{$I_{01w}$}};
    
    \node at (270:1) {\footnotesize{$I_0$}};
     \node at (30:.9) {\footnotesize{$I_1$}};   


	 \filldraw (90:1.5) circle(1pt);
    \node at (90:1.7) {\footnotesize{$0$}};
    
    \filldraw (340:1.5) circle(1pt);
    \node at (342:2) {\footnotesize{$\{-\alpha\}$}};
    
    \filldraw[white,draw=black] (200:1.5) circle(1pt);
    \node (b) at (180:2) {\footnotesize{$\alpha$}};

    \draw (182:1.85)--(200:1.5);

\end{tikzpicture}
\caption{Illustration of the intervals corresponding to the factors $w0$ (dark sector), $1w$,
and $01w$ of
$\infw{c}$.}
\label{fig:SturmianMod}
\end{subfigure}
\caption{An illustration of a system of codings of rotations.}
\end{figure}

Observe the special role played by $\infw{s}_{\alpha,\alpha}=\infw{c}$:
both $01\infw{s}$ and $10\infw{c} \in \soc{\infw c}$ for any $\alpha \in (0,1)$. This follows
from the fact that we may choose first that $0 \in I_0$ in which case $1\in I_1$. Then
$\infw{s}_{-\alpha,\alpha} = 01\infw{c}$. The choice $1\in I_0$ gives
$\infw{s}_{-\alpha,\alpha} = 10\infw{c}$.

Let us assume for simplicity that $S_{k-1}$ ends with $01$ for the remainder of this subsection.
We claim that each occurrence of $01w$ in \eqref{eq:sturmianProduct} (with $w01 = S_{k+1}$) starts from the
second to last letter of each $S_{k-1}$ in the factorization \eqref{eq:sturmianProduct}.
Indeed, by \autoref{thm:periodsOfCentral} $w$ has two periods: $|S_k|$ 
and $|S_k^{a_{k+1}-1} S_{k-1}|$ and further \autoref{lem:centralStandardPeriods} tells how consecutive occurrences
of $w$ appear. So in \eqref{eq:sturmianProduct} occurrences of $w$ correspond to prefixes of each block $S_{k}^{n_i}S_{k-1}$ (preceded by $01$) and to factors starting from position
$|S_{k}|$ (also $2|S_k|$ if $n_i = a_{k+1}+1$) of the factor $S_{k}^{n_i} S_{k-1} \cdot S_{k}$. The latter occurrences of $w$ are preceded by $10$.

\begin{claim}\label{prop:uncountably_manyMod}
There are uncountably many $0$-$1$-sequences $(b_n)_{n=0}^{\infty}$ so that the sequences of the form
\eqref{eq:sturmianProductMod} have distinct sets of factors.
\end{claim}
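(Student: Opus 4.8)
The plan is to \emph{encode} an arbitrary infinite binary word into the positions of the flips $b_i$, in such a way that this word can be recovered from the set of factors of $\infw{x}'$; it then suffices to feed in uncountably many binary words with pairwise distinct factor sets (for instance the characteristic Sturmian words of the distinct irrational slopes $\gamma\in(0,1)$, which have pairwise distinct factor sets since the slope equals the frequency of $1$ and hence is determined by the factors). It is convenient to re-parse \eqref{eq:sturmianProductMod} over the \emph{atomic} alphabet $\{A,B\}$ with $A=S_k$ and $B=S_k^{a_{k+1}-1}S_{k-1}$, the standard pair used in the proof of \autoref{claim:0-1-flippings}. Writing $\infw{c}=\prod_{i\ge0}A^{e_i}B$ with $e_i\in\{1,2\}$ (so $e_i+a_{k+1}-1=n_i$), a short computation using $S_k\mathfrak{F}(S_{k-1})=S_{k-1}S_k$ shows that $\infw{x}'=\prod_{i\ge0}A^{e_i'}B$ with $e_i'=e_i+b_{i-1}-b_i$ (with $b_{-1}:=0$); in particular, if $b_0=0$ and the flips are isolated, then $e_i'=0$ occurs exactly at those flipped indices $i$ with $e_i=1$.

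The first step is to fix the flip sequence. Since $\infw{c}$ is uniformly recurrent, the set $J=\{i:e_i=1\}$ is syndetic; let $D$ bound its gaps and fix integers $G_0>2D$ and $G_1>3G_0$. Given $\beta=\beta_0\beta_1\cdots\in\{0,1\}^{\N}$, set $t_0=0$ and, for $j\ge1$, let $t_j$ be the least element of $J$ with $t_j\ge t_{j-1}+G_{\beta_{j-1}}$, and put $b_i=1$ iff $i\in\{t_j:j\ge1\}$. Then the flips are isolated, $b_0=0$, and $t_j-t_{j-1}\in[G_{\beta_{j-1}},G_{\beta_{j-1}}+D]$. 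The word $\widehat{\infw{x}}:=\prod_{i\ge0}A^{e_i'}B$ over the atomic alphabet $\{A,B\}$ has the property that the two-letter factor $BB$ occurs precisely where some $e_i'=0$, hence precisely at the flips $t_j$. A telescoping computation shows that the number of atomic letters separating two consecutive occurrences of $BB$ is comparable to $t_j-t_{j-1}$ up to a bounded factor (because $e_i'\in\{1,2,3\}$ strictly between consecutive flips), so by the choice of $G_0,G_1$ it lies in one of two disjoint ranges according to the value of $\beta_{j-1}$. Consequently any factor of $\widehat{\infw{x}}$ containing $r$ consecutive occurrences of $BB$ determines the length-$(r-1)$ factor $\beta_j\cdots\beta_{j+r-2}$ of $\beta$, and so $\lang{\widehat{\infw{x}}}$ determines $\lang{\beta}$.

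The second step is to transfer this from the atomic level back to the letter level, i.e.\ to show that $\lang{\infw{x}'}$ determines $\lang{\widehat{\infw{x}}}$. This is where the structure theory of standard words enters: by the uniqueness of the factorization over a standard pair \cite[Prop.~2.2.1]{MR1905123} together with the analysis of the occurrences of $w$, $01w$ and $10w$ (with $w01=S_{k+1}=AB$) carried out above --- $w$ being bispecial, occurrences of $w$ pin down the $B$-boundaries up to the two ends of a factor --- the $\{A,B\}$-parsing of any word of $\{A,B\}^{\N}$ is recognizable from its bounded-length factors. Hence there is a constant $c>0$ such that $\lang[\le cN']{\infw{x}'}$ determines $\lang[\le N']{\widehat{\infw{x}}}$ for every $N'$, so $\lang{\infw{x}'}$ determines $\lang{\widehat{\infw{x}}}$ and, by the previous step, $\lang{\beta}$. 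Taking $\beta$ to range over the characteristic Sturmian words of distinct irrational slopes $\gamma\in(0,1)$ then yields uncountably many sequences $(b_n)_{n\ge0}$ for which the words $\infw{x}'$ of the form \eqref{eq:sturmianProductMod} have pairwise distinct sets of factors, which is the assertion of the claim.

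I expect the main obstacle to be the recognizability statement of the second step: one must verify that a flip cannot be ``faked'' by a misaligned occurrence of the factor $S_k^{a_{k+1}-1}S_{k-1}S_k^{a_{k+1}-1}S_{k-1}$, equivalently that two distinct flip patterns genuinely produce distinct sets of factors over $\{0,1\}$ and not merely over $\{A,B\}$. The remaining ingredients are the telescoping estimate of the first step and direct appeals to the facts on standard and central words recalled in \autoref{subsec:Standard}.
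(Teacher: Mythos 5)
Your proposal is correct in outline but follows a genuinely different route from the paper. The paper realizes the flipped words as codings of the rotation $R_\alpha$ under a modified partition $\nu'$, obtained by exchanging the labels on a subinterval $J\subseteq I_{01w}$ and its image $R_\alpha(J)$; it then shows that the frequency of the single factor $1w1$ exists and equals the measure of the corresponding set on the torus, so varying $|J|$ continuously yields uncountably many values of this frequency and hence uncountably many distinct factor sets. You instead encode an arbitrary infinite binary word $\beta$ into the gap lengths between isolated flips, re-parse over the atomic alphabet $\{A,B\}$ with $A=S_k$, $B=S_k^{a_{k+1}-1}S_{k-1}$, observe that $BB$ marks exactly the flip positions, and recover $\lang{\beta}$ from the two disjoint ranges of distances between consecutive occurrences of $BB$. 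Your computation $e_i'=e_i+b_{i-1}-b_i$, the syndeticity of $\{i\colon e_i=1\}$ (the exponent sequence is itself Sturmian over two letters), and the separation of the two gap ranges under $G_0>2D$, $G_1>3G_0$ all check out. The paper's argument is shorter because it only needs to localize one factor ($1w1$) and then invoke continuity of the frequency map; yours is purely combinatorial, avoids rotations and frequencies altogether, and in fact yields the stronger statement that the languages of arbitrary binary words embed into this family of flipped words.

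The one place where your argument is materially thinner than the paper's is the step you yourself flag: that $\lang{\infw{x}'}$ determines $\lang{\widehat{\infw{x}}}$, i.e., that the $\{A,B\}$-parsing is recoverable, away from the two ends, from bounded-length factors over $\{0,1\}$. Citing the uniqueness of the standard-pair factorization of $w01$ is not by itself sufficient: that proposition concerns a single central word, whereas you need to exclude misaligned occurrences of $A$ and $B$ inside arbitrary (flipped) products. The step is true and can be completed with the tools of \autoref{subsec:Standard}: by \autoref{lem:shortestUnbalancedPairBalancedSet} the factors of length at most $|w|+1$ of any product of the pair form a balanced set in which $w$ is the unique bispecial factor of its length; by \autoref{thm:periodsOfCentral} and \autoref{lem:centralStandardPeriods} the only periods of $w$ are $|A|$ and $|B|$, so consecutive occurrences of $w$ are at distance $|A|$ or $|B|$ and each extends to $Aw$ or $Bw$, which pins the block boundaries down to the dichotomy between occurrences of $01w$ and of $10w$. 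This is exactly the synchronization analysis the paper performs, for the unflipped product in the discussion preceding the Claim and for the flipped products when it localizes $1w1$ inside its proof; as written, your version of it is an assertion rather than a proof, and it is precisely the point where the real work lies.
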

\begin{proof}
Take the Sturmian word $\infw{c}$ as defined above. Consider now the interval $I_{01w}$ corresponding to the factor $01w$ in $\infw{c}$.
We have $I_{01w} = I(a,1-\alpha)$ for some $a < 1-\alpha$: Since both
$0w0$ and $1w0$ occur in $\infw{c}$, it follows that $I_{w0}$ contains the point $\alpha$.
Now $I_1 \cap R_{\alpha}^{-1}(I_{w0}) = I_{1w}$ and is of the form $I(a+\alpha,1)$ for some $a < 1-\alpha$. Hence $I_{01w} = R_{\alpha}^{-1}(I_{1w}) = I(a,1-\alpha)$.
Observe now that each time the orbit of $\infw{c}$ hits the interval
$I_{01w}$, it synchronizes with the factorization
\eqref{eq:sturmianProduct} as describe in the above discussion. Let us modify the 
coding $\nu$ to $\nu'$ in such a way that allows to flip the last two letters of $S_{k-1}$ to obtain a word of the form \eqref{eq:sturmianProductMod}. Take a subinterval
$J$ of $I_{01w}$ that does not have $1-\alpha$ as an endpoint. Then
$J' = R(J)$ is a subinterval of $1w$ that does not have $1$ as an endpoint.
Define $\nu' \colon \mathbb{T} \to \{0,1\}$ by $\nu'(x) = 1$ if
$x \in J$ or if $x \in I_1 \setminus J'$. Similarly $\nu'(x) = 0$ if 
$x\in J'$ or if $x\in I_0 \setminus J$. So the coding $\nu'$ partitions
the torus into six subintervals:
letting $J = I(a,b)$, the intervals are in anti-clockwise order

\begin{itemize}[topsep=2pt,itemsep=3pt]
\item $I(0,a)$ ($\mapsto 0$ under $\nu'$),

\item $I(a,b) = J$ ($\mapsto 1$),

\item $I(b,1-\alpha)$ ($\mapsto 0$),

\item $I(1-\alpha, a + \alpha)$ ($\mapsto 1$),

\item $I(a + \alpha, b + \alpha) = J$ ($\mapsto 0$),
 
\item $I(b + \alpha ,1)$ ($\mapsto 1$).
\end{itemize}

Code now the orbit of the point $\alpha$ under $\nu'$ to obtain an infinite word $\infw{t}$.
This coding acts the same as the coding under $\nu'$, except when the 
orbit hits a point in $J$. The factor starting from this interval is 
$10w$ (as opposed to $01w$ in $\infw{c}$). It is now immediate that
$\infw{t}$ is of the form \eqref{eq:sturmianProductMod}.

We claim that varying the length of $J$ we get uncountably many minimal subshifts.
We use the notion of factor frequency, generalizing letter frequencies. The \emph{uniform
frequency of a factor $z$ of $\infw{y}$} is defined as the limit
$\freq[\infw{y}]{z} = \lim_{N\to \infty}\frac{|v_N|_z}{N}$ when it exists, uniformly over $(v_n)_{n=0}^{\infty}$ being any sequence of factors of $\infw{y}$
with $|v_N| = N$, where $|v|_z$ is the number of occurrences of $z$ as a factor in $v$. It can
be seen that for any Sturmian word $\infw{y}$ and for any finite word $z$, the frequency
$\freq[\infw{y}]{z}$ exists and is equal to the length of the corresponding interval on the
torus \cite[\S2.2.3]{MR1905123}. Using precisely the same argument, one can see that the frequency
of any factor of $\infw{t}$ exists and equals the length of the corresponding interval/set
on the corresponding torus. In particular, the frequency of the factor $1w1$ exists. An occurrence of $1w1$ corresponds exactly to an
occurrence of $S_{k-1} \cdot S_{k}^{a_{k+1}} \mathcal{F}(S_{k-1})$ in
\eqref{eq:sturmianProductMod} (choosing $J$ appropriately gives occurrences of this form). Indeed, $1w1$ does not occur in $\infw{c}$
so it must overlap with at least one of the last two letters of an occurrence of $\mathfrak{F}(S_{k-1})$. If it overlaps both letters, then
it is a factor of
$S_{k}^{a_{k+1}}\mathfrak{F}(S_{k-1})S_{k}^{a_{k+1}}S_{k-1}(01)^{-1} = S_{k}^{a_{k+1}-1} S_{k-1}S_{k}^{a_{k+1}+1}S_{k-1}(01)^{-1}$ which is a factor of $\infw{c}$ contradicting balancedness. So it overlaps only one of the last two letters, so we deduce
that it occurs as a prefix of $1S_{k}^{a_{k+1}}\mathfrak{F}(S_{k-1})$. Since
both $S_k$ and $\mathfrak{F}(S_{k-1})$ end with $10$ we deduce that the
prefix $1$ ends an occurrence of $S_{k-1}$ in the factorization \eqref{eq:sturmianProductMod} as claimed.

Now the frequency of $1w1$ in $\infw{t}$ equals the size of the corresponding set on the torus, which can be varied continuously.
Choosing $J$ appropriately gives words with different frequencies of
$1w1$, which must have distinct sets of factors. The claim follows.
\end{proof}

\subsection{Additional remarks on the structure of the abelian closures of non-balanced binary words}\label{subsec:C-squeezingRevisited}


In this subsection we give a geometric proof of a weaker version of \autoref{prop:non-balancedAbelianSubshift} and  show that the abelian shift orbit closure of a uniformly recurrent word can contain non uniformly recurrent words of a quite complicated structure:

\begin{proposition} \label{prop:unbal}
Let $\infw{x}$ be a binary uniformly recurrent word which is not Sturmian. Suppose in addition
that $\infw{x}$ is non-balanced and that the frequency $\alpha$ of 1 exists and it is irrational. Then $\abclsr{\infw{x}}$ contains infinitely many non uniformly recurrent words  with distinct languages, such that none of their tails is uniformly recurrent.
\end{proposition}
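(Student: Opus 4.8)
## Proof plan

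The plan is to exploit the geometric ``$C$-squeezing'' machinery from \autoref{sec:C-bal}, but now applied to a non-balanced word, where the squeezing operation can be iterated past any finite height and where the resulting word need not be uniformly recurrent. The key point is that for a non-balanced $\infw{x}$ with irrational frequency $\alpha$, the graph $g_{\infw{x}}$ comes arbitrarily close to (but, being non-balanced, is not confined by) any line $y = \alpha x + C$; so the family of words obtained by squeezing from above at successively higher thresholds $C^{(1)} < C^{(2)} < \cdots$ genuinely changes the word each time, yet \autoref{lem:uppersqueezing} guarantees each stays in $\abclsr{\infw{x}}$.

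First I would fix $\alpha$ irrational and, since $\infw{x}$ is non-balanced, observe that $\limsup_x (g_{\infw{x}}(x) - \alpha x) = +\infty$ and $\liminf_x (g_{\infw{x}}(x) - \alpha x) = -\infty$ (otherwise $\infw{x}$ would be $C$-balanced for finite $C$). Choose an increasing sequence of thresholds $C_n \to \infty$, each avoiding the (at most one) integral point on each relevant line, and set $\infw{x}_n = s^+_{C_n}(\infw{x})$. By \autoref{lem:uppersqueezing}, $\infw{x}_n \in \abclsr{\infw{x}}$. Next I would show each $\infw{x}_n$ is \emph{not} uniformly recurrent: squeezing removes every ``high excursion'' of the graph above height $C_n$, replacing a local pattern $\cdots 10 \cdots$ near the peak by $\cdots 01 \cdots$, so a factor realising a large positive deviation of the graph (which occurs in $\infw{x}$ with bounded gaps by uniform recurrence) occurs in $\infw{x}_n$ only finitely often or with unbounded gaps — in fact one can arrange, by choosing $C_n$ so that arbitrarily large excursions survive below $C_n$ in $\infw{x}$ but are flattened above $C_n$, that certain wide factors simply disappear from large suffixes. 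A cleaner route: squeezing makes the graph of $\infw{x}_n$ eventually bounded above by $\alpha x + C_n + O(1)$ on some tail but still unbounded above on a thinner sparse set of positions (or one chooses the thresholds so that the ``squeezed'' word has excursions above $C_n$ only at a sparse, non-syndetic set of positions). Since such a wide factor is not syndetic in $\infw{x}_n$, no tail of $\infw{x}_n$ is uniformly recurrent. Finally, distinct $\infw{x}_n$ have distinct languages: taking $C_n < C_m$, the word $\infw{x}_n$ has been flattened below height $C_n + O(1)$ beyond a point but $\infw{x}_m$ retains excursions up to height $C_m$, so a factor of $\infw{x}_m$ of width exceeding $C_n - C_1 + O(1)$ (guaranteed by non-balancedness and uniform recurrence of $\infw{x}$, as in \autoref{claim4}'s argument) does not occur in $\infw{x}_n$ at all, nor in any tail of $\infw{x}_n$; and symmetrically, by also squeezing these words from below at a matching threshold one separates the languages in both directions. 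Hence $\{\lang{\infw{x}_n}\}_n$ are pairwise distinct, and none of the $\infw{x}_n$ has a uniformly recurrent tail.

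The main obstacle I anticipate is the bookkeeping needed to simultaneously guarantee three things of $\infw{x}_n$: that it lies in $\abclsr{\infw{x}}$ (handled by \autoref{lem:uppersqueezing}), that it is \emph{not} eventually uniformly recurrent (requires showing a suitable wide factor is non-syndetic in every tail — this is where non-balancedness must be leveraged carefully, since squeezing from one side only flattens from above and the downward excursions remain, so one must track that the resulting ``partially flattened'' graph still fails to be syndetic in its wide factors), and that the languages are genuinely distinct across $n$ (needs a width-based separation argument in the spirit of \autoref{claim5}, i.e.\ wide factors from higher thresholds cannot fit under lower thresholds). A secondary subtlety is ensuring that after upper squeezing the word does not accidentally become balanced or periodic — but since upper squeezing preserves frequencies (\autoref{claim:SqueezeFrequency}) and only affects the graph above height $C_n$, the downward non-balancedness is untouched, so $\infw{x}_n$ remains non-balanced and (by irrational $\alpha$ via \autoref{lem:uniformFrequencies}) aperiodic.
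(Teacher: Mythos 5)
Your overall strategy---squeezing plus membership in $\abclsr{\infw{x}}$ via \autoref{lem:uppersqueezing}---is in the right spirit, but the proposal rests on a false premise about what a \emph{single} application of $s^{+}_{C}$ does. The operation performs one simultaneous local swap $10\mapsto 01$ at each qualifying position; this lowers the graph by exactly $1$ at each such peak and leaves it unchanged everywhere else. It does \emph{not} confine the graph below $y=\alpha x + C_n + O(1)$ on any tail. (In \autoref{Prop:C-bal} a single application suffices only because the excursion above the threshold line has height less than $\varepsilon<1$ there.) For a non-balanced word the excursions above any line $y=\alpha x+C_n$ have unbounded height, so $s^{+}_{C_n}(\infw{x})$ still has excursions of unbounded height above that line, merely with each peak trimmed by one. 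Consequently neither of your two key claims follows: the width-based separation of $\lang{\infw{x}_n}$ from $\lang{\infw{x}_m}$ collapses (both words still contain arbitrarily wide factors above both thresholds), and there is no argument that $\infw{x}_n$ fails to be uniformly recurrent---the swaps occur at a sparse set of peaks and may well produce factors that already occur syndetically in $\infw{x}$. A further, smaller issue: non-balancedness only guarantees that $g_{\infw{x}}(x)-\alpha x$ is unbounded on \emph{at least one} side, not both, so the ``symmetric squeezing from below at a matching threshold'' step you invoke for separating languages is not available in general.

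The paper's proof supplies exactly the two ingredients you are missing. First, it preprocesses $\infw{x}$: by \autoref{cor:AbelianSsNonBalancedContainsIsolated1} one may assume all $1$s are isolated, and by \autoref{lem:line} that the graph meets $y=\alpha x$ infinitely often. With isolated $1$s, on the long portions of the graph lying outside the strip the (two-sided) squeezing $s_C$ acts exactly like the shift $\sigma$, which is what lets one show that every factor of $\infw{x}$ survives into $s_C(\infw{x})$. Second, it \emph{iterates} $s_C$ for a single fixed $C$ rather than varying the threshold: the iterates $s_C^{k}(\infw{x})$ all lie in $\abclsr{\infw{x}}$, acquire arbitrarily long $2C$-balanced prefixes as $k$ grows (precisely what one application cannot achieve), yet each still contains a fixed non-$2C$-balanced factor, now with ever-growing gaps. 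That combination is what simultaneously yields pairwise distinct languages and the failure of uniform recurrence in every tail. To rescue your increasing-thresholds picture you would have to iterate each $s^{+}_{C_n}$ unboundedly many times to actually flatten the excursions and then re-establish membership in the abelian closure along the way---at which point you have essentially reconstructed the paper's argument.
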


We remark that this proposition does not guarantee infinitely many minimal subshifts in the abelian closure, since these words with distinct languages can have the same languages of uniformly recurrent points in their shift orbit closure.

To prove this proposition, we again make use of graphs of words, as well as squeezing operations. First we need the following lemma, which is a slight modification of item 3 of Theorem 3 from \cite{AP16}:
\begin{lemma}\label{lem:line}
Let $\infw{w}$ be a binary uniformly recurrent word with frequency of $1$ equal to $\alpha$. Then there exists $\infw{u} \in \soc{\infw{w}}$ such that $g_{\infw{u}}$ intersects the line $y=\alpha x$ infinitely many times.
\end{lemma}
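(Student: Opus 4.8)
The plan is to prove \autoref{lem:line} by combining uniform recurrence with the fact that $g_{\infw{w}}$ stays within bounded distance of the line $y = \alpha x$ on average, and then extracting a convergent subsequence of shifts. First I would observe that, since the frequency of $1$ equals $\alpha$, for every $\eps > 0$ there is a length $N_\eps$ such that every factor $v$ of $\infw{w}$ with $|v| \geq N_\eps$ satisfies $\bigl| |v|_1 - \alpha |v| \bigr| \leq \eps |v|$; equivalently, reading a factor $a_{i+1}\cdots a_{j}$ with $j - i \geq N_\eps$, the increment $g_{\infw{w}}(j) - g_{\infw{w}}(i) - \alpha(j-i)$ has absolute value at most $\eps(j-i)$. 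Thus for any position $i$, the quantity $h(j) := g_{\infw{w}}(j) - g_{\infw{w}}(i) - \alpha(j-i)$ starts at $0$, moves by $1 - \alpha$ or $-\alpha$ at each step, and cannot consistently drift in one direction; hence it must change sign (or return to $0$) infinitely often as $j$ increases, but only within controlled bounds that depend on where $g_{\infw{w}}$ sits relative to the line globally.

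The key step is to pass to a word $\infw{u} \in \soc{\infw{w}}$ whose graph actually touches $y = \alpha x$. Set $d_i := g_{\infw{w}}(i) - \alpha i$ and let $\ell = \liminf_{i\to\infty} d_i$. If $\ell = -\infty$, then by the average-frequency bound above applied over windows, $d_i$ would have to tend to $-\infty$, contradicting that $\infw{w}$ is uniformly recurrent (a factor witnessing a near-average density occurs with bounded gaps, forcing $d_i$ to recover); so $\ell$ is finite. Now choose positions $i_n \to \infty$ with $d_{i_n} \to \ell$ and consider the shifts $\sigma^{i_n}(\infw{w})$. By compactness of $\infwords$, a subsequence converges to some $\infw{u} \in \soc{\infw{w}}$. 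For this $\infw{u}$, the value $g_{\infw{u}}(m) - \alpha m$ equals $\lim_n (d_{i_n + m} - d_{i_n})$ along the subsequence, and since $d_{i_n}$ realizes the liminf, one gets $g_{\infw{u}}(m) - \alpha m \geq 0$ for all $m$, with $g_{\infw{u}}(0) - 0 = 0$; more importantly, by choosing the $i_n$ also to be positions where $d$ attains a local minimum arbitrarily close to $\ell$ from both sides (which uniform recurrence guarantees: the relevant "dipping" factor recurs with bounded gaps), one ensures that $g_{\infw{u}}(m) = \alpha m$ for infinitely many $m$ — the limit graph repeatedly returns to touch the line.

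To make this last point precise I would argue as follows. Fix $\eps > 0$ small. Uniform recurrence gives a factor $v_\eps$ of $\infw{w}$ whose reading brings $d$ to within $\eps$ of $\ell$ and which occurs with gaps bounded by some $M_\eps$. Between two consecutive occurrences of $v_\eps$, the graph returns close to height $\ell + \eps$ above the line and stays a bounded distance above it; taking $\eps \to 0$ along a diagonal argument and simultaneously extracting shifts along positions where these near-minima occur produces $\infw{u}$ with $\inf_m (g_{\infw{u}}(m) - \alpha m) = 0$ attained, and attained at infinitely many values of $m$ because the near-minimum factor recurs in $\infw{u}$ with bounded gaps (uniform recurrence is preserved under passing to elements of the orbit closure of a uniformly recurrent word). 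Finally, replacing $\infw{u}$ by the shift that places one such touching point at position $0$, we get $g_{\infw{u}}(0) = 0$ and $g_{\infw{u}}(m) = \alpha m$ for infinitely many $m > 0$, as required. The main obstacle I anticipate is the bookkeeping of the simultaneous limits (shifting to catch the $\liminf$, and the diagonal over $\eps$) to guarantee that the touching happens \emph{infinitely often} rather than just once; this is exactly the place where uniform recurrence — ensuring the "dipping" factor recurs with bounded gaps in the limit word — must be invoked rather than mere recurrence.
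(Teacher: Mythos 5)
There is a genuine gap here, and it sits exactly where the content of the lemma lies. Writing $d_i = g_{\infw{w}}(i)-\alpha i$, your construction shifts $\infw{w}$ to positions $i_n$ realizing $\ell=\liminf_i d_i$ and, as you correctly compute, the limit word $\infw{u}$ then satisfies $g_{\infw{u}}(m)-\alpha m=\lim_n(d_{i_n+m}-d_{i_n})\ge 0$ for \emph{every} $m$. But this is precisely the wrong kind of word: its graph lies weakly \emph{above} the line $y=\alpha x$, and a linear segment of the graph over $[m-1,m]$ meets the line only if $d_{m-1}$ and $d_m$ weakly straddle $0$. For irrational $\alpha$ --- the only case in which the lemma is applied, in \autoref{prop:unbal} --- the integer $g_{\infw{u}}(m)$ can never equal $\alpha m$ for $m\ge 1$, so your $\infw{u}$ meets the line exactly once, at the origin. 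The key assertion that ``$g_{\infw{u}}(m)=\alpha m$ for infinitely many $m$'' is arithmetically impossible, and no diagonalization over $\eps$ or careful choice of near-minima can repair it: any word obtained by shifting to liminf-realizing positions stays on one side of the line. (Compare the example given right after the lemma: $\infw{s}_{0,\alpha}$ has $d_m<0$ for all $m\ge1$ and fails the conclusion even though $\inf_m|d_m|$ can be $0$.) Two secondary issues: the claim in your first paragraph that $h(j)$ ``must change sign infinitely often'' is false in general (again $\infw{s}_{0,\alpha}$), and the finiteness of $\ell$ is not justified --- uniform recurrence does not obviously forbid $d_i\to-\infty$ slowly while the frequency stays $\alpha$; but both points are moot once the approach is changed.

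What is actually needed is an element of $\soc{\infw{w}}$ whose prefix discrepancies take values $\le 0$ and $\ge 0$ alternately along an infinite sequence of positions. The paper gets this by an alternating return-word construction: start with a factor $u_1$ whose frequency of $0$ is at least $\freq[\infw{w}]{0}$, factor $\infw{w}$ into return words of $u_1$; since uniform recurrence bounds their lengths, at least one return word must again have frequency of $0$ at least $\freq[\infw{w}]{0}$ (otherwise the global frequency would be too small), and one extends $u_1$ to such a return word $u_2$; then repeat with the inequality reversed to get $u_3$, and so on. The nested prefixes $u_2,u_3,u_4,\dots$ of the limit word $\infw{u}=\lim_i u_i\in\soc{\infw{w}}$ then have discrepancies of alternating sign, forcing the graph to cross (or touch) the line between consecutive prefix lengths. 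If you wish to keep your compactness framework, you must extract shifts from positions at which \emph{both} signs of discrepancy are visible arbitrarily far to the right --- which is this alternating selection in disguise --- rather than from positions minimizing $d_i$.
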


\begin{proof}
In the proof we use the notion of a return word. For $u\in \lang{\infw{w}}$, let $n_1<n_2<\dots$ be all integers $n_i$ such that $u=w_{n_i}\dots w_{n_{i}+|u|-1}$. Then the word $w_{n_i}\dots w_{n_{i+1}-1}$ is a \emph{first return word} (or briefly \emph{first return}) of $u$ in $\infw{w}$ \cite{durand,hz,PUZYNINA2013390}. We can also consider a second (third, etc.) return as a factor having exactly two occurrences of $u$, one of them being a prefix, and ending just before the next occurrence of $u$.

We now build a word $\infw{u}$ as a limit of factors of $\infw{w}$. Start with any factor $u_1$ of $\infw{w}$, e.g. with a letter. Without loss of generality assume that
$\freq[u_1]{0} \geq \freq[\infw{w}]{0} = \rho_0$. Consider the factorization of $\infw{w}$ into returns
to $u_1$: $\infw{w} = v^{(1)}_1 v^{(1)}_2 \dots v^{(1)}_i \dots$, so that $v^{(1)}_i$ is a return to $u_1$ for $i>1$. We assume that all $v^{(1)}_i$ are longer than $u_1$, taking second returns (or third returns etc. if necessary). Then there exists $i_1>1$ satisfying $\freq[v_{i_1}^{(1)}]{0} \geq \rho_0$. Suppose the converse holds, i.e., for all $i>1$ $\freq[v_i^{(1)}]{0} < \rho_0$. Due to uniform recurrence, the lengths of $v_i^{(1)}$ are uniformly bounded, 
and hence $\freq[\infw{w}]{0} < \rho_0$, a contradiction. Take $u_2=v^{(1)}_{i_1}$, so $u_1$ is a prefix of $u_2$. Now consider a factorization of $\infw{w}$ into returns to $u_2$:
$\infw{w} = v^{(2)}_1 v^{(2)}_2 \cdots v^{(2)}_i \dots$. Then there exists $i_2>1$ satisfying $\freq[v_{i_2}^{(2)}]{0} \leq \rho_0$; take $u_3=v^{(2)}_{i_2}$. Continuing this line of reasoning to infinity, we build a word $\infw{u} = \lim_{n\to\infty}u_i \in \soc{\infw{w}}$, such that $\freq[u_{2i}]{0} \geq \rho_0$, $\freq[u_{2i+1}]{0} \leq \rho_0$. So, the graph of $\infw{u}$ intersects the line $y=\alpha x$ infinitely many times as was claimed. 
\end{proof}

\begin{example}
For $\infw{w} = \infw{s}_{0,\alpha}$, $g_{\infw{w}}$ does not intersect $y=\alpha x$ infinitely many times. Taking for example  $\infw{u} = \infw{s}_{\alpha,\alpha}$, we already have infinitely many intersections.
\end{example}

Let $\infw{u} \in \soc{\infw{w}}$ be a word satisfying \autoref{lem:line} and let $C \in \mathbb{R}, C>1$. We define an operation of \emph{$C$-squeezing} of $\infw{u}$, $\infw{u}'=s_C(\infw{u})$ as follows. For each $i$ such that $g_{\infw{u}}(i)>\alpha i + C$ and $u_{i-1}=1$, $u_i=0$, we define $u'_{i-1}=0$, $u'_i=1$. Symmetrically, if $g_{\infw{u}}(i) < \alpha i - C$  and $u_{i-1}=0$, $u_i=1$, we define $u'_{i-1}=1$, $u'_i=0$. In these cases we say that we have a switch at position $i$. Otherwise we define $u'_i=u_i$. Informally, this means that if there is a piece of graph outside the stripe between the lines $y=\alpha x - C$ and $y=\alpha x + C$, we make local changes in this piece getting this part of the graph closer to the stripe. Essentially, the operation is similar to upper squeezing, only we squeeze symmetrically from both sides and leave the stripe between the two lines $y=\alpha x \pm C$ unchanged. See \autoref{fig:squeezing}.

\begin{figure} \centering
\begin{tikzpicture}[scale=0.5]
\tikzstyle{every node}=[shape=rectangle,fill=none,draw=none,minimum size=0cm,inner sep=2pt]
    \tikzstyle{every path}=[draw=black,line width = 1pt]

    \draw[->] (0,0)  to   (23.5,0);
    \draw[->] (0,0)  to   (0,11.5);

    \tikzstyle{every path}=[draw=black,line width = 0.5pt]    
\draw [gray!50] (0,0) grid (23,11);
\draw (0,0.8) -- (23,10.9);
\draw (1,0) -- (23,9.6);

    \tikzstyle{every path}=[draw=black,line width = 1pt]      
\draw (0,0)  --  (1,1) -- (2,2) -- (3,2) -- (4,2) -- (5,2) -- (6,2) -- (7,2) -- (8,2) -- (9,3) -- (10,3) -- (11,4) -- (12,5) -- (13,6) -- (14,6) -- (15,7) -- (16,8) -- (17,9) -- (18,10) -- (19,10) -- (20,10) -- (21,10) -- (22,10) -- (23,10);

\draw[dotted] (1,1) -- (2,1) -- (3,2)
(7,2) -- (8,3) -- (9,3) -- (10,4) -- (11,4)
(17,9) -- (18,9) -- (19,10);

 \node[fill=white] at (0.5,-0.5){$1$};
  \node[fill=white] at (1.5,-0.5){$1$};
   \node[fill=white] at (2.5,-0.5){$0$};
    \node[fill=white] at (3.5,-0.5){$0$};

 \node[fill=white] at (4.5,-0.5){$0$};
  \node[fill=white] at (5.5,-0.5){$0$};
   \node[fill=white] at (6.5,-0.5){$0$};
    \node[fill=white] at (7.5,-0.5){$0$};
    
 \node[fill=white] at (8.5,-0.5){$1$};
  \node[fill=white] at (9.5,-0.5){$0$};
   \node[fill=white] at (10.5,-0.5){$1$};
    \node[fill=white] at (11.5,-0.5){$1$};
    
 \node[fill=white] at (12.5,-0.5){$1$};
  \node[fill=white] at (13.5,-0.5){$0$};
   \node[fill=white] at (14.5,-0.5){$1$};
    \node[fill=white] at (15.5,-0.5){$1$};
    
 \node[fill=white] at (16.5,-0.5){$1$};
  \node[fill=white] at (17.5,-0.5){$1$};
   \node[fill=white] at (18.5,-0.5){$0$};
    \node[fill=white] at (19.5,-0.5){$0$};

  \node[fill=white] at (20.5,-0.5){$0$};
   \node[fill=white] at (21.5,-0.5){$0$};
    \node[fill=white] at (22.5,-0.5){$0$};
    
\draw (6,-2)  --  (7,-2);
\draw[densely dotted] (10,-2)  --  (11,-2);

 \node[fill=white] at (7.5,-2){$\infw{u}$};
 \node[fill=white] at (13,-2){$\infw{u}'=s_C(\infw{u})$};    
\end{tikzpicture}
\caption{Squeezing.}\label{fig:squeezing}
\end{figure}
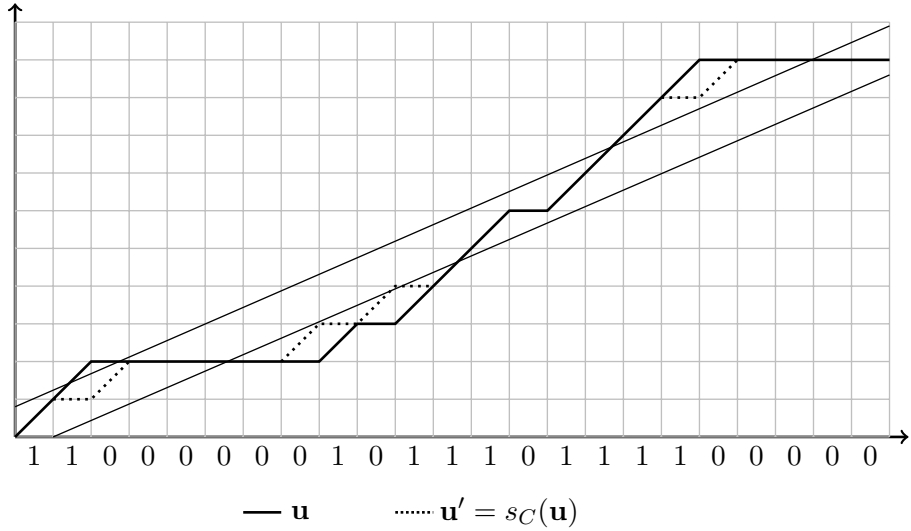

Clearly, similarly to upper squeezing (\autoref{claim:SqueezeFrequency}), the operation of squeezing does not change frequency.

The following Lemma generalizes \autoref{lem:uppersqueezing} for squeezing from both sides:

\begin{lemma}\label{lem:squeezing} Let $\infw{u}' = s_C(\infw{u})$, where $\infw{u}$ is as in \autoref{prop:unbal}. Then $\infw{u}' \in \abclsr{\infw{u}}$.
\end{lemma}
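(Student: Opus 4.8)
The plan is to follow the proof of \autoref{lem:uppersqueezing}, the only genuinely new feature being that $s_C$ now performs switches on \emph{both} sides of the stripe $\{|y-\alpha x|\le C\}$. Two preliminary observations about $\infw u$ are needed. First, $\infw u$ has uniform frequency $\freq[\infw u]{1}=\alpha$: since $\infw u\in\soc{\infw w}$ we have $\lang{\infw u}\subseteq\lang{\infw w}$, hence $\supfreq[\infw u]{1}\le\alpha\le\inffreq[\infw u]{1}$, so these coincide; consequently, for every $n$, the set $\lang[n]{\infw u}$ contains a word of weight $\lceil\alpha n\rceil$ and a word of weight $\lfloor\alpha n\rfloor$. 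Second, no two switches of $s_C$ occur at consecutive positions: an upper switch at $\ell$ forces $g_{\infw u}(\ell)>\alpha\ell+C$ and $a_\ell=0$, which is incompatible with any switch at $\ell+1$ (an upper one would need $a_\ell=1$; a lower one would need $g_{\infw u}(\ell+1)=g_{\infw u}(\ell)+1<\alpha(\ell+1)-C$, impossible for $\alpha<1$), and symmetrically for a lower switch at $\ell$.

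Assume for contradiction, and by \autoref{lem:corridorLemma}, that $\infw u'=s_C(\infw u)$ has a factor $v'=a'_i\cdots a'_{j-1}$ heavier than every word of $\lang[j-i]{\infw u}$; the case in which some factor of $\infw u'$ is lighter than all words of its length is symmetric. Put $v=a_i\cdots a_{j-1}$. A switch strictly inside $v$ flips two positions of $v$, and since no two switches are adjacent it contributes nothing to the weight; thus $|v'|_1-|v|_1=\delta_i+\delta_j$, where $\delta_i$ (resp.\ $\delta_j$) records the effect on $v$ of a switch at position $i$ (resp.\ $j$). A lower switch at $i$ would turn $a_i=1$ into $a'_i=0$, contributing $-1$; an upper switch at $j$ would turn $a_{j-1}=1$ into $a'_{j-1}=0$, also $-1$. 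Since $v'$ is heavier, $\delta_i+\delta_j\ge1$, so the only contributing switches are possibly an upper switch at $i$ (with $a_{i-1}a_i=10$, $a'_i=1$, $\delta_i=1$) and a lower switch at $j$ (with $a_{j-1}a_j=01$, $a'_{j-1}=1$, $\delta_j=1$). This leaves three cases. If $\delta_i=1$ and $\delta_j=0$, the configuration at both ends of $v$ is precisely that of Case~1 in the proof of \autoref{lem:uppersqueezing}, which produces an abelian-equivalent factor of $\infw u$ (namely $a_{i-1}\cdots a_{j-2}$ when $a_{j-1}=0$, and $a_{i+1}\cdots a_j$ when $a_{j-1}=1$, so $a_j=1$); this contradicts the heaviness of $v'$. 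The case $\delta_i=0$, $\delta_j=1$ is the mirror image — a lone lower switch at $j$ — and a symmetric shift of the window by one position produces an abelian-equivalent factor of $\infw u$, again a contradiction.

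The remaining, genuinely new, case is $\delta_i=\delta_j=1$: an upper switch at $i$ (so $g_{\infw u}(i)>\alpha i+C$ and $a_i=0$) together with a lower switch at $j$ (so $g_{\infw u}(j)<\alpha j-C$ and $a_j=1$), with $|v'|_1=|v|_1+2$. The shifting trick no longer applies, so I argue by counting: $a_i=0$ gives $g_{\infw u}(i-1)=g_{\infw u}(i)>\alpha i+C$, and $a_j=1$ gives $g_{\infw u}(j-1)=g_{\infw u}(j)-1<\alpha j-C-1<\alpha(j-1)-C$ (using $\alpha<1$), so
\[
|v|_1=g_{\infw u}(j-1)-g_{\infw u}(i-1)<\bigl(\alpha(j-1)-C\bigr)-\bigl(\alpha i+C\bigr)=\alpha(j-1-i)-2C\le\alpha(j-i)-2C .
\]
Hence $|v'|_1=|v|_1+2<\alpha(j-i)-2C+2<\alpha(j-i)\le\lceil\alpha(j-i)\rceil$, where the middle inequality uses $C>1$. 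But $\lang[j-i]{\infw u}$ contains a word of weight $\lceil\alpha(j-i)\rceil$, so $v'$ is not heavier than every word of $\lang[j-i]{\infw u}$ — a contradiction. In the symmetric (lighter) situation the mixed case has a lower switch at $i$ and an upper switch at $j$, forcing $|v|_1>\alpha(j-i)+2C$ and hence $|v'|_1=|v|_1-2>\alpha(j-i)>\lfloor\alpha(j-i)\rfloor$, again contradicting $\lang[j-i]{\infw u}$ containing a word of weight $\lfloor\alpha(j-i)\rfloor$. Thus no such $v'$ exists and $\infw u'\in\abclsr{\infw u}$.

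I expect the mixed case $\delta_i=\delta_j=1$ to be the main obstacle: the self-correcting mechanism behind \autoref{lem:uppersqueezing} — replacing a flipped factor by an honest one-position shift of it in $\infw u$ — breaks down when flips on opposite sides of the stripe meet at the two ends of a single factor, and one must instead exploit that such a configuration drags $v$, hence $v'$, too far from weight $\alpha(j-i)$ to have been extremal, which is exactly where the hypothesis $C>1$ enters. A minor point to verify is the boundary case $i=0$, where by convention no switch occurs at $i$ so only $\delta_i=0$ arises, and where the window-shift in the subcase $\delta_j=1$ must be chosen to the right; and one should keep the indexing of $g_{\infw u}$ consistent throughout.
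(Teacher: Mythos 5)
Your proof is correct and follows essentially the same route as the paper's: reduce to the Corridor Lemma, observe that interior switches leave the weight unchanged, dispose of all boundary-switch combinations that do not increase the weight, reuse Case~1 of \autoref{lem:uppersqueezing} (and its mirror image) for a lone weight-increasing switch at one end, and treat the mixed case separately. The only difference is cosmetic: where the paper dismisses the mixed case (upper switch at $i$, lower switch at $j$) with ``the new Parikh vector gets closer to the frequency, so it is evident,'' you supply the explicit estimate $|v'|_1<\alpha(j-i)-2C+2<\lceil\alpha(j-i)\rceil$ using $C>1$, and you also make explicit the (implicitly used) fact that no two switches occur at consecutive positions.
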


\begin{proof} The proof is similar to the proof of  \autoref{lem:uppersqueezing}, although here we have to consider more cases (for convenience of the reader, we repeat some arguments to have a complete proof here).

Assume the converse. Then, due to the \hyperref[lem:corridorLemma]{Corridor Lemma} there exists a factor $u'_i\cdots u'_{j-1}$ such that for each $k$ we have $|u'_i\cdots u'_{j-1}|_1>|u_k\cdots u_{k+j-i-1}|_1$---\hypertarget{assumption}{Assumption (*)} (the case of $<$ is symmetric).

First we remark that the switches inside the factor (at positions $i+1,\dots,j-1$) do not change the Parikh vector of the factor. So, to change Parikh vector, we must have a switch at position $i$ or/and $j$. 

Secondly, note that we have a switch at position $\ell$ if and only if $g_{\infw{u}}(\ell)\neq g_{\infw{u}'}(\ell)$.

\begin{enumerate}[leftmargin=*]
    \item Switch at $i$ and not in $j$. 

If $g_{\infw{u}}(i)<\alpha i - C$, then $u'_i=0$, $u_i=1$ and considering $k=i$, we get $|u_i\cdots u_{j-1}|_1>|u'_i\cdots u'_{j-1}|_1$, which is not possible by our \hyperlink{assumption}{Assumption (*)}.

The case $g_{\infw{u}}(i)>\alpha i + C$ is the same as Case 1 from the proof of \autoref{lem:uppersqueezing}: we then have  $u'_i=1$, $u_i=0$.
The only possibility is that $|u'_i\cdots u'_{j-1}|_1>\lceil \alpha(j-i)\rceil$ (since otherwise the factor $u'_i\cdots u'_{j-1}$ is in the set of abelian factors of $\infw{u}$). This in turn means that $g_{\infw{u}'}(j) > \alpha j +C$ (due to frequency). By the conditions of Case 1 we have that $g_{\infw{u}'}(j)=g_{\infw{u}}(j)$, which means that $u_{j-1}u_j\neq 10$. If $u_{j-1}=0$, then we taking $k=i-1$ we get an abelian equivalent factor in $\infw{u}$ (see \autoref{fig:C-balA}). If $u_j=1$, then we can take $k=i+1$  (see \autoref{fig:C-balB}). 

\item Switches at both $i$ and $j$. 

If $g_{\infw{u}}(i)<\alpha i - C$ and $g_{\infw{u}}(j)<\alpha j - C$ or $g_{\infw{u}}(i)>\alpha i + C$ and $g_{\infw{u}}(j)>\alpha j + C$, then the Parikh vector does not change (we can take $k=i$).

If  $g_{\infw{u}}(i)<\alpha i - C$ and $g_{\infw{u}}(j)>\alpha j + C$ or $g_{\infw{u}}(i)>\alpha i + C$ and $g_{\infw{u}}(j)<\alpha j - C$, then the new Parikh vector gets ``closer'' to the frequency, so it is evident that it belongs to the set of Parikh vectors of $\infw{u}$.

\item switch at $j$ and not in $i$. 
The case is symmetric to Case 1. \qedhere
\end{enumerate}
\end{proof}



\begin{proof}[Proof of \autoref{prop:unbal}]
We may assume without loss of generality that all $1$s are isolated in $\infw{x}$ by \autoref{cor:AbelianSsNonBalancedContainsIsolated1}. Further, we may assume that the
graph of $\infw{x}$ intersects the line $y = \alpha x$ infinitely often by \autoref{lem:line}. 
Consider the $C$-squeezing operation on $\infw{x}$: since all $1$s are isolated, the $C$-squeezing operation acts like the shift operation $\sigma$ on the parts that are outside the strip. Since the graph must contain arbitrarily long parts outside the stripe,
each factor of $\infw{x}$ is contained in the $s_C(\infw{x})$.

Consider a factor $w$ of $\infw{x}$ that is not $2C$-balanced. It occurs within bounded gaps
in $\infw{x}$, and all iterations of $s_{C}$ on $\infw{x}$. Notice though that the gaps could
grow in length. Now iterating the $C$-squeezing operation, we get arbitrarily long prefixes that are $2C$-balanced. So we get longer and longer gaps. This means that the longest $2C$-balanced factors grow in length, when iterating $s_C$. Since each of them is 
contained
in $\abclsr{\infw{x}}$, the claim follows.
\end{proof}






\section{Conclusions and open problems} \label{sec:conclusions}

In this paper, we 
studied a notion of abelian closures of infinite binary words. An interesting open question is to characterize words for which
$\abclsr{\infw{x}} = \soc{\infw x}$. Among uniformly recurrent binary words, this property
gives a characterization of Sturmian words, but the characterization does not extend to usual
generalizations of Sturmian words over non-binary alphabets: neither for balanced words, nor
for words of minimal complexity, nor for Arnoux-Rauzy words \cite{KarhumakiPW:on_abelian_subshifts,DBLP:conf/cwords/Puzynina19,HejdaSteinerZamboni15}. 

\begin{openproblem}\label{op:equals}
Find a characterization of the property 
$\soc{\infw{x}} = \abclsr{\infw{x}}$ for nonbinary
alphabets.
\end{openproblem}

A modification of
this question is to characterize words for which $\abclsr{\infw x}$ contains exactly one
minimal subshift.

Another question to study concerns abelian closures of binary
words and \autoref{thm:binary}. We showed that the abelian closures of uniformly recurrent
binary words contain infinitely 
many minimal subshifts. We also showed that in fact there are uncountably many minimal subshifts unless the frequency exists and it is irrational. The proof is quite technical and consists of four parts relating to the cases of rational frequency, no letter frequencies, balanced and unbalanced words with irrational letter frequencies, and the proofs of all the parts rely on different methods. It would be interesting to try to find a proof treating all cases at once and giving a stronger result of uncountably many minimal subshifts in all the cases:

\begin{openproblem}
Find a shorter proof of \autoref{thm:binary}. Does it hold if we substitute ``infinitely many'' by ``uncountably many" in the case of irrational frequencies?
\end{openproblem}

We remark that we were able to prove the problem for a wide class of such words (see
\autoref{prop:wideStripUncountablyMany}). In fact, by Propositions \ref{prop:NoFrequency}, 
\ref{prop:rationalFreqUncountable}, and \ref{prop:wideStripUncountablyMany},
if there is a uniformly recurrent binary word $\infw{x}$ for which
$\abclsr{\infw{x}}$ contains infinitely, but only countably
many minimal subshifts, then it must have irrational letter
frequency $\alpha$ and, further, $\min_{v \in \lang[n]{\infw{x}}}|v|_1 = \lfloor \alpha n\rfloor$ or $\max_{v \in \lang[n]{\infw{x}}}|v|_1 = \lceil \alpha n\rceil$ for infinitely
many $n$.

A quantitative version of the above question would be
"Does the abelian closure of a uniformly recurrent non-Sturmian aperiodic binary word have \emph{positive entropy}?" See, e.g., \cite{LindMarcus95} for a definition of entropy.
The proof of \autoref{prop:rationalFreqUncountable} implies that the entropy is positive for words with rational letter frequencies. This can be translated to the case of no frequencies also.

\section*{Acknowledgements}
	Svetlana Puzynina is partially supported by Russian Foundation of Basic Research
(grant 20-01-00488) and by the Foundation for the Advancement of Theoretical Physics and Mathematics
``BASIS''. Part of the research performed while Markus Whiteland was at the Department of Mathematics and Statistics, University of Turku, Finland. Markus Whiteland would like to thank Joonatan Jalonen for
interesting discussions on the topic.

\bibliographystyle{abbrvnat}
\biboptions{sort&compress}
\bibliography{bibliography}

\end{document}